\newtheorem{theorem}{Theorem}
\newtheorem{lemma}[theorem]{Lemma}
\newtheorem{proposition}[theorem]{Proposition}
\theoremstyle{definition}
\newtheorem{definition}[theorem]{Definition}
\newtheorem{example}[theorem]{Example}
\newtheorem{remark}[theorem]{Remark}
\numberwithin{equation}{section}
\newtheorem{discussion}[theorem]{Discussion}
\newenvironment{discussionbox}[1][]{%
    \begin{discussion}[#1]\pushQED{\qed}}{\popQED \end{discussion}}
\numberwithin{theorem}{section}
\DeclareMathOperator{\syz}{Syz}
\newcommand{\ints}{\mathbb{Z}}
\newcommand{\p}{\mathfrak p}
\newcommand{\m}{\mathfrak m}
\DeclareMathOperator{\h}{H}
\def\to{\longrightarrow}
\def\ov{\overline}
\DeclareMathOperator{\assh}{Assh}
\DeclareMathOperator{\ass}{Ass}
\DeclareMathOperator{\ann}{Ann}
\DeclareMathOperator{\depth}{depth}
\DeclareMathOperator{\reg}{reg}
\begin{document}

\title[Finiteness of Hilbert Coefficients]
{On the Finiteness of the set of Hilbert Coefficients}

\date{\today}

\thanks{The first author is supported by INdAM COFOUND Fellowships cofounded by Marie Curie actions, Italy.}

\keywords{Hilbert coefficients, fiber coefficients, Buchsbaum module, generalized Cohen-Macaulay module}

\subjclass[2010]{Primary: 13H10, 13D40; Secondary: 13D45}
\author[Masuti]{Shreedevi K. Masuti}
\address{Dipartimento Di Matematica, Universit\`a Di Genova, Via Dodecaneso 35, 16146, Genova,
Italy}
\email{masuti@dima.unige.it,masuti.shree@gmail.com}
\author[Saloni]{Kumari Saloni}
\address{Department of Mathematics, IIT Guwahati, Guwahati, Assam 781039, India}
\email{saloni.kumari@iitg.ac.in}

\begin{abstract}
\begin{sloppypar}
Let $(R,\m)$ be a Noetherian local ring of dimension $d$ and $K,Q$ be $\m$-primary ideals in $R.$ In this paper we study the finiteness properties of the sets $\Lambda_i^K(R):=\{g_i^K(Q): Q\mbox{ is a parameter 
ideal of }R\},$ where $g_i^K(Q)$ denotes the Hilbert coefficients of $Q$ with respect to $K,$ for $1\leq i \leq d.$ We prove that $\Lambda_i^K(R)$ is finite for all $1\leq i \leq d$ if and only if $R$ is generalized Cohen-Macaulay. Moreover, we show that if $R$ is unmixed then finiteness of the set $\Lambda_1^K(R)$ suffices to conclude that $R$ is generalized Cohen-Macaulay. We obtain partial results for $R$ to be Buchsbaum in terms of $|\Lambda_i^K(R)|=1.$ Our results are more general than in \cite{vanishing2} and \cite{goto-ozeki}. We also obtain a criterion for the set $\Delta^K(R):=\{g_1^K(I): I\mbox{ is an  $\m$-primary ideal of }R\}$ to be finite, generalizing a result of \cite{KT}.
\end{sloppypar}
\end{abstract}

\maketitle
\section{Introduction} 
The main objective of this paper is to study the finiteness properties of various sets of the Hilbert coefficients relative to the properties of the ring.
First we introduce the notations needed to define these sets.

Throughout this paper $(R,\m)$ denotes a Noetherian local ring of dimension $d$ with maximal ideal
$\m ,$ $M$ a finitely generated $R$-module of dimension $r$ and $K$ a fixed $\m$-primary ideal.

For an $\m$-primary ideal $Q,$ the fiber cone of $Q$ with respect to $K$ is the standard graded algebra $F_K(Q)=\mathop{\oplus}\limits_{n\geq 0} Q^n/KQ^n.$ The {\it Hilbert function} of the fiber cone $F_K(Q)$ is given by $H(F,n):=\ell_R(Q^n/KQ^n),$ where $\ell_R(M)$ denotes the 
length of an $R$-module $M.$ It is well known that $H(F,n)$ agrees with a polynomial $P(F,n)$ of degree $d-1,$ for $n\gg 0,$ called the {\it{Hilbert polynomial}} of $F_K(Q).$ 
We can write  $P(F,n)$ in the following way:
\begin{equation*}
 P(F,n)=\sum\limits_{i=0}^{d-1}(-1)^i f_i^K(Q)\binom{n+d-i-1}{d-1-i}
\end{equation*}
 where the coefficients $f_i^K(Q)$ are integers known as the {\it fiber coefficients} of $Q$ with respect to $K.$ 
 The  
{\it{Hilbert-Samuel function}} of $Q$ for $M$ is the function $H(Q,n,M)=\ell_R(M/Q^nM).$ In \cite{Jayanthan&Verma1} 
authors introduced the Hilbert function of $Q$ with respect to $K$ defined as $ H_K(Q,n)=\ell_R(R/KQ^n) .$
It is known that for $n\gg 0$, $H(Q,n, M)$ (resp. $H_K(Q,n)$) agrees with a polynomial
$P(Q,n,M)$ (resp. $P_K(Q,n)$) of degree $r$ (resp. $d$). We can write these polynomials in the following manner:
\begin{eqnarray}
P(Q,n,M)&=& \sum\limits_{i=0}^r(-1)^ie_i(Q,M)\binom{n+r-i-1}{r-i}\label{Hilbert-S-poly}\\
P_K(Q,n)&=&\sum\limits_{i=0}^d(-1)^ig_i^K(Q)\binom{n+d-i-1}{d-i}\label{Hilbert-S-poly-for-K}
\end{eqnarray}
for unique integers $e_i(Q,M)$ (resp. $g_i^K(Q)$) known as the {\it{Hilbert coefficients}} of $Q$ for $M$ (resp. {\it{Hilbert coefficients}} of $Q$ with respect to $K$). One of the motivations to study $g_i^K(Q)$  is that these coefficients are related to the fiber coefficients (see \eqref{eqn-fi-gi-ei}) and hence are useful to study the properties of $f_i^K(Q).$ The properties of $g_i^K(Q)$ have been studied in \cite{Clare}, \cite{Gu-Zhu-Tang}, \cite{Jayanthan&Verma1}, \cite{Jayanthan&Verma}, \cite{saloni-1}, \cite{Gu-Zhu-Tang2}.   

In this paper we consider the sets
\begin{eqnarray*}
 \Lambda_i^K(R)&=& \{g_i^K(Q)~|~Q \text{ is a parameter ideal of } R\} \mbox{ and } \\
\delta_i^K(R)&=&\{g_i^K(Q)~|~Q \text{ is a parameter ideal of }R \text{ such that } Q\subseteq K\}
\end{eqnarray*}
for $1 \leq i \leq d.$
Note that $\delta_i^K(R)\subseteq \Lambda_i^K(R).$ Following the notation of \cite{vanishing2}, we set  
\begin{eqnarray*}
\Lambda_i(M)&=&\{e_i(Q,M)~|~Q \text{ is a parameter ideal for }M\} \mbox{ for all }1 \leq i \leq r.
\end{eqnarray*}
For a set $S$ we use $|S|$ to denote the cardinality of the set $S.$ 

Let $r,d \geq 2.$ In \cite{vanishing2} authors  proved that the set $\Lambda_1(M)$ is finite (resp. singleton) if and only if $M$ is generalized Cohen-Macaulay (resp. Buchsbaum) provided $M$ is an unmixed module, see \cite[Theorems 4.5 and 5.4]{vanishing2}. In Section \ref{SetOfg1}, we investigate the set $\Lambda_1^K(R)$ for analogous properties. 
 We prove that an unmixed local ring $R$ is generalized Cohen-Macaulay if and only $\Lambda_1^K(R)$ ( equivalently $\delta_1^K(R)$) is finite (Theorem \ref{GCM}).
Next, we prove that if $R$ is unmixed and $|\Lambda_1^K(R)|=1$ then $R$ is Buchsbaum where as the converse holds true for $K=\m$ (Theorem \ref{thm: Buchsbaum}). We expect that $\Lambda_1^K(R)$ need not be singleton in a Buchsbaum local ring for an arbitrary $\m$-primary ideal $K$ (see Discussion \ref{disc:Buchs}).

In Section \ref{SetOfgi}, we study the finiteness of the sets $\Lambda_i^K(R)$ for all $1\leq i\leq d.$ 
We prove that 
 $R$ (need not be unmixed) is generalized Cohen-Macaulay if and only if $\Lambda_i^K(R)$ (equivalently $\delta_i^K(R)$) are finite for 
all $1\leq i \leq d-1$ (Theorem \ref{thm-main-gcm2}). In \cite[Theorem 1.1]{goto-ozeki} authors proved that  $R$ is generalized Cohen-Macaulay if and only if $\Lambda_i(R)$ is finite for all $1 \leq i \leq d.$ We improve their result and extend it to modules. More precisely,
 we show that $M$ (resp. $M/\h^0_\m(M)$) is generalized Cohen-Macaulay (resp. Buchsbaum) if and only if $|\Lambda_i(M)| < \infty$ (resp. $|\Lambda_i(M)|=1$) for 
all $1 \leq i \leq r-\depth M$ (Theorems \ref{thm-generalize-for-modules} and \ref{thm:charcOfBuchs}).


In section \ref{section-finiteness-in-general}, we consider the set 
\begin{eqnarray*}
\Delta^K(R)&=&\{g_1^K(I)~|~I \text{ is an $\m$-primary ideal of }R\}.
\end{eqnarray*} In \cite[Theorem 1.1]{KT}, authors proved that the set $ \{e_1(I,R)~|~I \text{ is an $\m$-primary ideal of }R\}$ is finite if and only if $d=1$ and $R/\h^0_\m(R)$ is analytically unramified.  
We prove that $\Delta^K(R)$ is finite if and only if $d=1$ and $R/\h^0_\m(R)$ is analytically unramified (Theorem \ref{theorem:finiteness-of-g1}). 

We gather preliminary results needed in section \ref{section:prelim}.

Few words about proofs. Considering $K$ as an $R$-module, we get a relation between $g_i^K(Q)$ and $e_i(Q,K)$ which shows 
that $|\Lambda_i^K(R)|=|\Lambda_i(K)|$ (See \eqref{eqn-g1}). This suggests that results on the finiteness properties of the set $\Lambda_i(M),$ for any finitely generated module $M$, are useful to study the similar properties of $\Lambda_i^K(R).$ This method is used in order to study the finiteness of the set $\Lambda_i^K(R)$ in this paper. This method depends on the module theoretic properties of $K$ and is used to study the finiteness of the set $\Lambda_i^K(R)$ in this paper. 

We refer \cite{matsumura} and \cite{bruns-herzog} for undefined terms.

\section{Preliminaries}
\label{section:prelim}
In this section we prove some preliminary results needed in the subsequent sections. We first note a relation between the Hilbert coefficients and the fiber coefficients.
\begin{remark}
 \begin{enumerate}
  \item 
 Let $d \geq 1.$ Since $\ell_R(R/KQ^n)=\ell_R(R/K)+\ell_R(K/Q^nK),$ for all $n\in \ints,$ $P_K(Q,n)=\ell_R(R/K)+P(Q,n,K).$ Thus comparing the coefficients of both sides, we get \begin{eqnarray}                                                                                                                                                                                                      
\label{eqn-g1}
  g_0^K(Q)=e_0(Q,K) \mbox{ and }  g_i^K(Q)=\begin{cases}
                e_i(Q,K) & \text{if }\ i\neq d\\
                e_d(Q,K)+(-1)^d\ell_R(R/K) & \text{if }\ i=d                .
 \end{cases}
               \end{eqnarray} 
\item Since $\ell_R(R/KQ^n)=\ell_R(R/Q^n)+\ell_R(Q^n/KQ^n)$ for all integers $n$, we have 
$P_K(Q,n)=P(Q,n,R)+P(F,n)$ for all integers $n.$ Thus comparing the coefficients of both sides, we get
\begin{eqnarray}
\label{equation relating g0 and f0} g_0^K(Q)&=&e_0(Q,R) \mbox{~~~and}\\ 
 f_i^K(Q)&=&e_{i+1}(Q,R)-g_{i+1}^K(Q)+e_i(Q,R)-g_i^K(Q) \text{~~for $0\leq i\leq d-1$}.\label{eqn-fi-gi-ei}
 \end{eqnarray}
 \end{enumerate}
 \end{remark} 

We now recall few definitions. A module $M$ of dimension $r$ is said to be {\it generalized Cohen-Macaulay} if $\h^i_\m(M)$ has finite length for all 
$0 \leq i\leq r-1,$ where $\h^i_\m(M)$ denotes the $i$-th local cohomology module of $M$ with support in $\m.$ For a parameter ideal $Q,$ set 
$$I(Q;M):= \ell_R(M/QM)-e_0(Q,M)\mbox{ and }I(M):=\sup \{I(Q;M):Q \mbox{ is a parameter ideal for }M\}.$$   
It is well-known that $M$ is generalized Cohen-Macaulay if and only if $I(M)< \infty.$ In this case
\begin{eqnarray} \label{eqn:I(M)}
 I(M)=\sum_{i=0}^{r-1}\binom{r-1}{i} \ell_R(\h^i_\m(M)).
 \end{eqnarray}

 We refer \cite{cst} and \cite{trung-gcm} for details.
 
\begin{definition}
 \begin{enumerate}
  \item A parameter ideal $Q$ for $M$ is said to be {\it standard for $M$} if $I(Q;M)=I(M).$ An ideal $I$ with $\ell_R(M/IM)<\infty$ is said to be {\it$M$-standard ideal} 
if every parameter ideal for $M$ contained in $I$ is standard for $M$.
  \item An $R$-module $M$ is said to be {\it Buchsbaum} if every parameter ideal for $M$ is standard.
 \end{enumerate}
\end{definition}

In the following lemma we relate the properties of $R$ and $K$ as an $R$-module.
\begin{lemma}\label{prop-basic1-gcm}
 Let $(R,\m)$ be a Noetherian local ring of dimension $d\geq 1$ and $K$ an $\m$-primary ideal of $R$. Then 
 \begin{enumerate}
  \item \label{prop-basic1-gcm-2} $R$ is a generalized Cohen-Macaulay ring if and only if $K$ is a  generalized Cohen-Macaulay $R$-module. 
  \item \label{prop-basic1-gcm-3} Suppose $\depth R >0$ and $K$ is a Buchsbaum $R$-module. Then $R$ is a Buchsbaum ring. 
  \item \label{prop-basic1-gcm-4} If $R$ is Buchsbaum then $\m$ is a Buchsbaum $R$-module.
 \end{enumerate}
\end{lemma}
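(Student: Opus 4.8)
The plan is to exploit the short exact sequences relating $R$, $K$, and $R/K$ (a finite length module), together with $\m$, $K$, and $R/K$ or $K/\m$, and to track local cohomology and the Buchsbaum property through them. For part \eqref{prop-basic1-gcm-2}, I would start from the exact sequence $0 \to K \to R \to R/K \to 0$. Since $K$ is $\m$-primary, $R/K$ has finite length, so $\h^i_\m(R/K) = 0$ for $i \geq 1$ and $\h^0_\m(R/K) = R/K$. The long exact sequence in local cohomology then gives $\h^i_\m(K) \cong \h^i_\m(R)$ for all $i \geq 2$, and an exact sequence $0 \to \h^0_\m(K) \to \h^0_\m(R) \to R/K \to \h^1_\m(K) \to \h^1_\m(R) \to 0$. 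In particular $\h^i_\m(K)$ has finite length if and only if $\h^i_\m(R)$ does, for every $i$ in the relevant range $0 \le i \le d-1$, since the discrepancy at $i=0,1$ only involves the finite length module $R/K$. This immediately yields the equivalence of the generalized Cohen--Macaulay properties.

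For part \eqref{prop-basic1-gcm-3}, assume $\depth R > 0$, so $\h^0_\m(R) = 0$, and suppose $K$ is a Buchsbaum $R$-module. From the exact sequence above with $\h^0_\m(R)=0$ we get $\h^0_\m(K)=0$ as well, and $\h^1_\m(K)$ surjects onto $\h^1_\m(R)$ with kernel a quotient of $R/K$. The idea is to use the characterization of Buchsbaumness via standard parameter ideals: a parameter ideal $Q$ of $R$ is also a parameter ideal for the module $K$ (as $\dim K = d = \dim R$), and one compares $I(Q;R)$ with $I(Q;K)$. Using $\ell_R(R/QR) = \ell_R(K/QK) + \ell_R(R/K) - \ell_R(\text{something involving } \operatorname{Tor})$ — more precisely, tensoring $0 \to K \to R \to R/K \to 0$ with $R/Q$ and using that $Q$ kills $R/K$ gives $\ell_R(R/QR) = \ell_R(K/QK) + \ell_R(R/K) - \ell_R(\operatorname{Tor}_1^R(R/K, R/Q))$, while $e_0(Q,R) = e_0(Q,K)$ since multiplicity is additive and $R/K$ has lower dimension. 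Hence $I(Q;R) = I(Q;K) + \ell_R(R/K) - \ell_R(\operatorname{Tor}_1^R(R/K,R/Q))$. Since $K$ is Buchsbaum, $I(Q;K) = I(K)$ is constant; the remaining task is to show $\ell_R(\operatorname{Tor}_1^R(R/K,R/Q))$ is independent of $Q$. This is where one invokes that $Q$ is generated by a regular sequence on $R$ (as $\depth R > 0$ and, after reduction, one may pass to superficial elements), so the Koszul complex on the generators of $Q$ computes the Tor, and $\operatorname{Tor}_1^R(R/K, R/Q) \cong (K : Q)/K$ or an analogous fixed finite-length module — I would need the Buchsbaum hypothesis on $K$, which forces such colength differences to stabilize, to pin this down. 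Then $I(Q;R)$ is independent of $Q$, i.e. $R$ is Buchsbaum.

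For part \eqref{prop-basic1-gcm-4}, assume $R$ is Buchsbaum and consider $0 \to \m \to R \to R/\m \to 0$ with $R/\m$ a field (finite length). The same local cohomology comparison shows $\m$ is generalized Cohen--Macaulay, and I would use a known characterization of Buchsbaum modules in terms of the surjectivity of the natural maps $\h^i_\m(M) \to $ certain Koszul cohomology, or the criterion that $M$ is Buchsbaum iff $I(Q;M)$ is constant. Comparing $I(Q;\m)$ and $I(Q;R)$ through the exact sequence, as in part \eqref{prop-basic1-gcm-3} but now with $R/\m$ in the role of the finite length module, reduces to controlling $\ell_R(\operatorname{Tor}_1^R(R/\m, R/Q)) = \ell_R((\m : Q)/\m \cap \dots)$; here the Buchsbaum property of $R$ supplies the standard fact that $\m Q = Q \cap \m Q$-type relations and that $(0 :_R \m)$-related lengths are controlled, so again the colength difference is constant. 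The main obstacle throughout is part \eqref{prop-basic1-gcm-3}: establishing that the correction term $\ell_R(\operatorname{Tor}_1^R(R/K, R/Q))$ does not vary with the parameter ideal $Q$. I expect this to require either a careful Koszul-homology computation combined with the Buchsbaum hypothesis on $K$, or an appeal to the theory of $M$-standard ideals from the preceding definition — noting that if $K$ is Buchsbaum then every parameter ideal inside a suitable power of $\m$ is standard, and one can arrange $Q$ to lie in such a power after the usual reduction to an infinite residue field and superficial elements.
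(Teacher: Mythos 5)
Your part \eqref{prop-basic1-gcm-2} is exactly the paper's argument (the long exact sequence of local cohomology applied to $0\to K\to R\to R/K\to 0$, with $R/K$ of finite length), and is fine. The problem is part \eqref{prop-basic1-gcm-3}, where you yourself flag the gap: your plan reduces Buchsbaumness of $R$ to showing that the correction term $\ell_R(\Tor_1^R(R/K,R/Q))\cong\ell_R((K\cap Q)/KQ)$ (together with $\ell_R(R/(K+Q))$, since $Q$ need not kill $R/K$) is independent of the parameter ideal $Q$, and you do not prove this. The remedies you suggest do not work: a parameter ideal $Q$ is \emph{not} generated by an $R$-regular sequence just because $\depth R>0$ (that needs $R$ Cohen--Macaulay, and $d\geq 2$ in general), so the Koszul computation of the Tor is unavailable; and the observation that parameter ideals inside a high power of $\m$ are standard only controls those special $Q$, whereas Buchsbaumness requires \emph{every} parameter ideal to be standard. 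The paper avoids this issue entirely: using that $Q$ is standard for the Buchsbaum module $K$ and the cohomology comparison $\ell_R(\h^1_\m(K))=\ell_R(\h^1_\m(R))+\ell_R(R/K)$, $\h^i_\m(K)\cong\h^i_\m(R)$ for $i\geq 2$, it computes $\ell_R(R/QK)=e_0(Q,R)+I(R)+d\,\ell_R(R/K)$ and then invokes Trung's criterion \cite[Corollary 4.9]{trung-gcm}, which says precisely that this equality forces $Q$ to be a standard parameter ideal of $R$. That external criterion is the missing idea in your write-up; without it (or an actual proof that your Tor term is constant) part \eqref{prop-basic1-gcm-3} is not established.

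Part \eqref{prop-basic1-gcm-4} in your sketch is also left vague ("$(\m:Q)/\m\cap\dots$" is not a computation), but here the gap is easily closed and no Buchsbaum-theoretic input about correction terms is needed: $\Tor_1^R(R/\m,R/Q)\cong(Q\cap\m)/Q\m=Q/Q\m$ has length $\mu(Q)=d$ for every parameter ideal $Q$, which is exactly how the paper proceeds, writing $I(Q;\m)=\ell_R(R/Q)+\ell_R(Q/Q\m)-1-e_0(Q,R)=I(Q;R)+d-1=I(R)+d-1$, a constant, since every $Q$ is standard for the Buchsbaum ring $R$. So: \eqref{prop-basic1-gcm-2} correct and identical to the paper, \eqref{prop-basic1-gcm-4} incomplete as written but repairable along your own lines, \eqref{prop-basic1-gcm-3} genuinely incomplete.
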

\begin{proof}
\eqref{prop-basic1-gcm-2}:
 Consider the exact sequence
 $$0\to K\to R\to R/K\to 0.$$
This induces the exact sequence
\begin{equation} \label{eqn:LongExactSeq}
0\to \h_{\m}^0(K)\to \h_{\m}^0(R) \to \h_{\m}^0(R/K)=R/K\to\h_{\m}^1(K)\to \h_{\m}^1(R)\to 0 
\end{equation}
and isomorphisms 
\begin{equation} \label{eqn:Iso}
\h_{\m}^i(K)\cong \h_{\m}^i(R) \text{ for } 2\leq i\leq d. 
\end{equation}
This shows that $\h^i_\m(K)$ has finite length if and only if $\h^i_\m(R)$ has finite length for $1 \leq i \leq d-1.$ Hence the assertion follows.
 
\eqref{prop-basic1-gcm-3}:
 Let $Q=(x_1,\ldots,x_d)$ be an arbitrary parameter ideal of $R.$ We show that $Q$ is standard for $R$.
 Since $\depth R >0,$ \eqref{eqn:LongExactSeq} gives an exact sequence
  \begin{equation} \label{eqn:ShortExactSeq}
   0\to R/K\to \h_{\m}^1(K)\to \h_{\m}^1(R)\to 0. 
  \end{equation}
  Thus 
  \begin{eqnarray*}
   &&\ell_R(R/QK)\\
   &=& \ell_R(R/K) + \ell_R(K/QK)\\
   &=&\ell_R(R/K) + \sum_{i=1}^{d-1}\binom{d-1}{i}\ell_R(\h_{\m}^i(K)) + e_0(Q,K) \hspace{3.5cm} \mbox{ (as $K$ is Buchsbaum)}\\
   &=&\ell_R(R/K) + \sum_{i=1}^{d-1}\binom{d-1}{i}\ell_R(\h_{\m}^i(R))+(d-1)\ell_R(R/K) + e_0(Q,K)  \hspace{0.5cm}
   \mbox{ (from \eqref{eqn:Iso} and \eqref{eqn:ShortExactSeq})}\\
   &=&e_0(Q,R)+I(R)+d\ell_R(R/K) \hspace{5.5cm} \mbox{ (since $e_0(Q,K)=e_0(Q,R)$)}.
  \end{eqnarray*}
Hence, by \cite[Corollary 4.9]{trung-gcm}, $Q$ is a standard parameter ideal of $R.$

\eqref{prop-basic1-gcm-4}: 
Let $Q=(x_1,\ldots,x_d)$ be a parameter ideal for $\m.$ We have 
\begin{eqnarray*}
 I(Q;\m) &=&\ell_R(\m/Q\m)-e_0(Q,\m)\\
 &=&\ell_R(R/Q)+\ell_R(Q/Q\m)-\ell_R(R/\m)-e_0(Q,R) \hspace{2cm}(\mbox{as }e_0(Q,\m)=e_0(Q,R))\\
&=& I(Q;R)+d-1\\
&=& I(R)+d-1  \hspace{7cm} (\mbox{since $Q$ is standard for $R$})
 \end{eqnarray*}
which is independent of $Q.$ Hence $\m$ is Buchsbaum. 
\end{proof}

\section{The set $\Lambda_1^K(R)$} \label{SetOfg1}
In this section we study the finiteness of the set $\Lambda_1^K(R).$ We give an equivalent criterion for the set $\Lambda_1^K(R)$ to be finite in an unmixed local ring (Theorem \ref{GCM}). 
We  also consider the problem when $g_1^K(Q)$ is independent of $Q.$ For $K=\m,$ we give a characterization for $|\Lambda_1^K(R)|=1$ in an unmixed local ring and obtain partial results for arbitrary $K$ (Theorem \ref{thm: Buchsbaum}).

Recall that a module $M$ is said to be {\it unmixed} if $\dim \widehat{R}/\p=\dim M$ for all $\p \in \ass_{\widehat{R}} (\widehat{M}),$ where $\widehat{M}$ 
denotes the $\m$-adic completion of $M.$ In the following proposition we give bounds on $g_1^K(Q)$ in generalized Cohen-Macaulay local rings which are 
independent of $Q.$ Consequently, we give an equivalent criterion for the finiteness of $\Lambda_1^K(R)$ in terms of $K$ in an unmixed local ring.


\begin{proposition} \label{proposition:g1}
  Let $(R,\m)$ be a Noetherian local ring of dimension $d\geq 2$ and $K$ an $\m$-primary ideal of $R.$ 
  \begin{enumerate}
   \item \label{proposition:g1-p1}Suppose $R$ is generalized Cohen-Macaulay. Then the following assertions hold.
   \begin{enumerate}
    \item \label{proposition:g1-p1-a} For any parameter ideal $Q$ of $R,$ 
      $ -\mathop\sum\limits_{i=1}^{d-1}\binom{d-2}{i-1}\ell_R(\h_{\m}^i(R))- \ell_R(R/K) \leq g_1^K(Q) \leq 0 .$ In particular, 
      $\Lambda_1^K(R)$ is finite.
   \item \label{proposition:g1-p1-b} If $Q$ is a standard parameter ideal for $K,$ then
   $$g_i^K(Q)= \begin{cases}
           (-1)^i\mathop\sum\limits_{j=1}^{d-i}\binom{d-i-1}{j-1}\ell_R(\h_{\m}^j(K)) & \mbox{ if } 1\leq i\leq d-1\\
           (-1)^d (\ell_R(\h_{\m}^0(K)) + \ell_R(R/K))& \mbox{ if }i=d.
     \end{cases}
     $$
   \end{enumerate}
   \item \label{proposition:g1-p2}Suppose $R$ is an unmixed local ring. Then $K$ is a generalized Cohen-Macaulay (resp. Buchsbaum) $R$-module if and only if $|\Lambda_1^K(R)|<\infty$ (resp. $|\Lambda_1^K(R)|=1$).
  \end{enumerate}
 \end{proposition}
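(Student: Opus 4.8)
The plan is to reduce everything to known statements about the module $K$ by exploiting the identity $g_i^K(Q) = e_i(Q,K)$ for $1\le i\le d-1$ (and the shift by $(-1)^d\ell_R(R/K)$ when $i=d$) recorded in \eqref{eqn-g1}, which in particular gives $\Lambda_1^K(R) = \Lambda_1(K)$ as sets. So part \eqref{proposition:g1-p1-a} amounts to bounding $e_1(Q,K)$ uniformly over parameter ideals $Q$ when $K$ is a generalized Cohen-Macaulay module (which holds by Lemma \ref{prop-basic1-gcm}\eqref{prop-basic1-gcm-2}, since $R$ is generalized Cohen-Macaulay); this is exactly the kind of bound available for $\Lambda_1(M)$ of a generalized Cohen-Macaulay module $M$. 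Concretely, I would first pass to a minimal reduction / system of parameters that is a standard parameter ideal for $K$, use the standard formula for the Hilbert coefficients of a standard parameter ideal of a generalized Cohen-Macaulay module in terms of the lengths $\ell_R(\h^j_\m(K))$, and then control the variation of $e_1(Q,K)$ over \emph{all} parameter ideals by the difference formula for $e_1$ under change of parameter ideal (the $I(Q;K)$-type estimates from \cite{trung-gcm} / \cite{cst}), giving both the upper bound $g_1^K(Q)\le 0$ and the lower bound in terms of the displayed sum. Translating the cohomology of $K$ back to that of $R$ via \eqref{eqn:LongExactSeq}--\eqref{eqn:Iso} then puts the bound in the stated form with $\h^i_\m(R)$ and the correction term $\ell_R(R/K)$ coming from $e_1(Q,K)$ versus the sum; finiteness of $\Lambda_1^K(R)$ is immediate.

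For \eqref{proposition:g1-p1-b}, when $Q$ is standard for $K$ I would compute all the $g_i^K(Q)$ directly. The Hilbert--Samuel polynomial of a standard parameter ideal of a generalized Cohen-Macaulay module $K$ has the closed form
\begin{equation*}
P(Q,n,K) = e_0(Q,K)\binom{n+d-1}{d} - \sum_{j\ge 1}(-1)^{j-1}\ell_R(\h^j_\m(K))\binom{n+d-j-1}{d-j} + (-1)^d\ell_R(\h^0_\m(K)),
\end{equation*}
or more precisely one expands $\ell_R(K/Q^{n+1}K)$ using the Buchsbaum-type formula for standard ideals; extracting the coefficient of $\binom{n+d-i-1}{d-i}$ gives $e_i(Q,K) = (-1)^i\sum_{j\ge 1}\binom{d-i-1}{j-1}\ell_R(\h^j_\m(K))$ for $1\le i\le d-1$ and $e_d(Q,K) = (-1)^d\ell_R(\h^0_\m(K))$. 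Feeding these into \eqref{eqn-g1} yields the asserted formulas, the extra $\ell_R(R/K)$ in the $i=d$ case being precisely the $(-1)^d\ell_R(R/K)$ from that display.

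Finally, \eqref{proposition:g1-p2} is where unmixedness enters. The forward direction is covered by \eqref{proposition:g1-p1-a} together with its Buchsbaum analogue (when $K$ is Buchsbaum every parameter ideal is standard, so \eqref{proposition:g1-p1-b} forces $g_1^K(Q)$ to be the single value $-\sum_{j=1}^{d-1}\binom{d-2}{j-1}\ell_R(\h^j_\m(K))$, independent of $Q$). For the converse I would invoke the module-level results of \cite{vanishing2}: there it is shown that for an unmixed module $M$ of dimension $\ge 2$, finiteness of $\Lambda_1(M)$ (resp. $|\Lambda_1(M)|=1$) implies $M$ is generalized Cohen-Macaulay (resp. Buchsbaum). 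Since $K$ is unmixed whenever $R$ is unmixed (the associated primes of $\widehat K$ as an $\widehat R$-module are among the minimal primes of $\widehat R$, all of dimension $d$ because $K$ is $\m$-primary hence of full support), and since $\Lambda_1^K(R)=\Lambda_1(K)$, the hypothesis on $\Lambda_1^K(R)$ transfers verbatim to $\Lambda_1(K)$ and the cited theorems conclude. The main obstacle I anticipate is the bookkeeping in \eqref{proposition:g1-p1-a}: getting the \emph{uniform} lower bound over all parameter ideals (not just standard ones) requires the difference estimate $|e_1(Q,K) - e_1(Q',K)|$ type control, and one must check the sign conventions so that the exact constants $\binom{d-2}{i-1}$ and the $\ell_R(R/K)$ term come out right after passing through \eqref{eqn-g1} and the cohomology comparison; the rest is essentially assembling known formulas.
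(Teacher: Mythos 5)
Your proposal is correct and follows essentially the same route as the paper: reduce to $e_i(Q,K)$ via \eqref{eqn-g1}, use Lemma \ref{prop-basic1-gcm}\eqref{prop-basic1-gcm-2} to see $K$ is generalized Cohen--Macaulay, quote the uniform bound for $e_1$ of a generalized Cohen--Macaulay module (the paper cites \cite[p.~47]{vanishing2}) and the standard-parameter-ideal formula of \cite[Corollary 4.2]{trung-gcm}, translate local cohomology of $K$ into that of $R$ by \eqref{eqn:LongExactSeq}--\eqref{eqn:Iso}, and settle part \eqref{proposition:g1-p2} by unmixedness of $K$ together with \cite[Theorems 4.5 and 5.4]{vanishing2}. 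Two small caveats: the displayed ``closed form'' of $P(Q,n,K)$ in your treatment of \eqref{proposition:g1-p1-b} is not correct as written (the $\ell_R(\h^j_\m(K))$ should enter through the double-sum coefficients $(-1)^i\sum_j\binom{d-i-1}{j-1}\ell_R(\h^j_\m(K))$, which is the formula you do ultimately use), and the uniform lower bound in \eqref{proposition:g1-p1-a} is not obtained by a ``difference estimate between parameter ideals'' but is simply the known bound for generalized Cohen--Macaulay modules that you correctly identify and that the paper quotes directly.
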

 \begin{proof}
 \eqref{proposition:g1-p1}: Since $R$ is generalized Cohen-Macaulay, by Lemma \ref{prop-basic1-gcm}\eqref{prop-basic1-gcm-2}, $K$ is a generalized Cohen-Macaulay $R$-module. Hence, by \cite[p. 47]{vanishing2}, $-\mathop\sum\limits_{i=1}^{d-1}\binom{d-2}{i-1}\ell_R(\h_{\m}^i(K)) \leq e_1(Q,K) \leq 0.$ 
 Using \eqref{eqn:LongExactSeq} and  \eqref{eqn:Iso}, we get that $-\ell_R(\h_{\m}^1(R))-\ell_R(R/K) \leq -\ell_R(\h_{\m}^1(K))$ and $\ell_R(\h_\m^i(R))=\ell_R(\h_\m^i(K))$ for all $2 \leq i \leq d-1,$ respectively.  
Thus, $-\mathop\sum\limits_{i=1}^{d-1}\binom{d-2}{i-1}\ell_R(\h_{\m}^i(R)) -\ell (R/K)\leq e_1(Q,K) \leq 0.$ 
Now \eqref{proposition:g1-p1-a} follows from \eqref{eqn-g1}.
 
 If $Q$ is a standard parameter ideal for $K$ then, by \cite[Corollary 4.2]{trung-gcm}, 
 $$e_i(Q,K)= (-1)^i\mathop\sum\limits_{j=0}^{d-i}\binom{d-i-1}{j-1}\ell_R(\h_{\m}^j(K)). $$
Hence \eqref{proposition:g1-p1-b} follows from  \eqref{eqn-g1}.
 
 \eqref{proposition:g1-p2}: Since $R$ is unmixed, $K$ is an unmixed $R$-module. Also, from \eqref{eqn-g1},  $|\Lambda_1^K(R)|=|\Lambda_1(K)|.$ Hence the result follows from \cite[Theorem 4.5 and Theorem 5.4]{vanishing2}.
 \end{proof}



The following theorem provides an equivalent criterion for an unmixed local ring $R$ to be generalized Cohen-Macaulay in terms of the set 
$\Lambda_1^K(R)$. 


\begin{theorem} \label{GCM}
Let $(R,\m)$ be an unmixed local ring of dimension $d\geq 2$ and $K$ an $\m$-primary ideal of $R$. Then the following conditions are equivalent:
\begin{enumerate}
 \item \label{GCM1} $R$ is generalized Cohen-Macaulay;
 \item \label{GCM2} $\Lambda_1^K(R)$ is a finite set;
 \item \label{GCM3} $\delta_1^K(R)$ is a finite set;
 \end{enumerate}
\end{theorem}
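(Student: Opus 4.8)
The plan is to prove the cycle of implications $\eqref{GCM1}\Rightarrow\eqref{GCM2}\Rightarrow\eqref{GCM3}\Rightarrow\eqref{GCM1}$, leaning heavily on the observation \eqref{eqn-g1} that, viewing $K$ as an $R$-module, $g_1^K(Q)=e_1(Q,K)$ (since $d\geq 2$), so that $\Lambda_1^K(R)=\Lambda_1(K)$ and $\delta_1^K(R)=\{e_1(Q,K):Q\subseteq K\text{ a parameter ideal}\}$. The first implication $\eqref{GCM1}\Rightarrow\eqref{GCM2}$ is immediate from Proposition \ref{proposition:g1}\eqref{proposition:g1-p1-a}, which already gives uniform bounds on $g_1^K(Q)$ when $R$ is generalized Cohen-Macaulay. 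The implication $\eqref{GCM2}\Rightarrow\eqref{GCM3}$ is trivial because $\delta_1^K(R)\subseteq\Lambda_1^K(R)$.

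The substance is $\eqref{GCM3}\Rightarrow\eqref{GCM1}$. First I would note that since $R$ is unmixed, $K$ is an unmixed $R$-module (its associated primes in $\widehat R$ are among those of $\widehat R$), so by Lemma \ref{prop-basic1-gcm}\eqref{prop-basic1-gcm-2} it suffices to show $K$ is a generalized Cohen-Macaulay $R$-module. By the characterization \cite[Theorem 4.5]{vanishing2}, an unmixed module $M$ with $\dim M\geq 2$ is generalized Cohen-Macaulay iff $\Lambda_1(M)$ is finite. So the task reduces to showing that finiteness of $\delta_1^K(R)$ — the set of $e_1(Q,K)$ as $Q$ ranges over parameter ideals contained in $K$ — already forces $\Lambda_1(K)$ to be finite, equivalently forces $K$ to be generalized Cohen-Macaulay. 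The natural route is: given any parameter ideal $Q'$ for $K$, one wants to compare $e_1(Q',K)$ with $e_1(Q,K)$ for a suitably chosen parameter ideal $Q\subseteq K$, using that high powers $Q'^{\,t}$ (or $Q'K^s$) live inside $K$; then standard reduction/superficial-element arguments relating $e_1$ of an ideal and $e_1$ of a power (or of an ideal with the same integral closure up to the relevant order) control the discrepancy in terms of data independent of the unboundedness, so that finiteness of $\delta_1^K(R)$ propagates to finiteness of $\Lambda_1(K)$. Concretely I expect one picks $x_1,\dots,x_d$ a system of parameters that is part of a minimal generating set situation and passes to $Q=(x_1^{a},\dots,x_d^{a})$ with $a$ large enough that $Q\subseteq K$, using the behavior of $e_1$ under the Veronese-type operation $Q\mapsto Q^{[a]}$ together with the fact that for generalized Cohen-Macaulay-type estimates the defect $I(Q;K)$ is what governs $e_1$.

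The main obstacle, and the place I would spend the most care, is precisely this last comparison step: a priori one only controls $e_1(Q,K)$ for $Q\subseteq K$, and one must bootstrap to all parameter ideals $Q'$ for $K$. The cleanest way around it is probably not a direct numerical comparison but rather a contrapositive/limit argument: if $K$ were \emph{not} generalized Cohen-Macaulay, then some local cohomology $\h^i_\m(K)$ ($1\leq i\leq d-1$) has infinite length, and one produces a sequence of parameter ideals — which can be taken inside $K$ by raising to powers, since $K$ is $\m$-primary — along which $I(Q_n;K)\to\infty$, and then invokes the relation between $I(Q;K)$ and $e_1(Q,K)$ (for unmixed modules, via the results used in \cite{vanishing2}, $e_1(Q,K)\leq 0$ with $|e_1(Q,K)|$ comparable to the Cohen-Macaulay defect) to conclude $\{e_1(Q_n,K)\}$ is infinite, contradicting finiteness of $\delta_1^K(R)$. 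Thus I would organize the proof as: reduce via \eqref{eqn-g1} and Lemma \ref{prop-basic1-gcm} to a statement about the module $K$; apply \cite[Theorem 4.5]{vanishing2}; and for the hard direction use the contrapositive together with the observation that parameter ideals witnessing unboundedness of the Cohen-Macaulay defect can be chosen inside the $\m$-primary ideal $K$ by passing to powers, which is exactly where the hypothesis $\dim R\geq 2$ and unmixedness are used.
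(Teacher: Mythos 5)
Your reduction steps are fine: $(1)\Rightarrow(2)$ via Proposition \ref{proposition:g1}\eqref{proposition:g1-p1-a}, $(2)\Rightarrow(3)$ by inclusion, and the passage from $\delta_1^K(R)$ to the module $K$ via \eqref{eqn-g1}, unmixedness of $K$, and Lemma \ref{prop-basic1-gcm}\eqref{prop-basic1-gcm-2} all match the paper. The genuine gap is in how you handle $(3)\Rightarrow(1)$. You route the argument through \cite[Theorem 4.5]{vanishing2}, which requires finiteness of the \emph{full} set $\Lambda_1(K)$, and you then have to bootstrap from parameter ideals $Q\subseteq K$ to arbitrary parameter ideals. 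Neither of your two suggested mechanisms is carried out: the ``Veronese/power comparison'' of $e_1(Q',K)$ with $e_1(Q,K)$ for $Q\subseteq K$ is only gestured at, and the contrapositive version rests on the assertion that for an unmixed, possibly non--generalized Cohen-Macaulay module, $|e_1(Q,K)|$ is ``comparable to the Cohen-Macaulay defect'' $I(Q;K)$ for arbitrary parameter ideals. That comparability is not a result you can cite in this generality; the clean formulas and bounds relating $e_1$ to lengths of local cohomology hold for standard parameter ideals (d-sequences) and for generalized Cohen-Macaulay modules, which is exactly what you are trying to establish, so the sketch is circular at the decisive point.

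The fix, and what the paper actually does, is to avoid the bootstrap entirely: instead of \cite[Theorem 4.5]{vanishing2} one invokes \cite[Lemma 4.1]{vanishing2}, which says that an unmixed module $K$ of dimension $d\geq 2$ is generalized Cohen-Macaulay as soon as $e_1(Q,K)$ is bounded below over the parameter ideals $Q\subseteq\m^l$ (for a fixed $l$) that form d-sequences for $K$. Choosing $l$ with $\m^l\subseteq K$, this family of parameter ideals lies inside $K$, so its $e_1$-values form a subset of $\delta_1^K(R)$ (via \eqref{eqn-g1}), and finiteness of $\delta_1^K(R)$ already gives the required bound. Thus no comparison between $e_1$ of ideals inside $K$ and $e_1$ of arbitrary parameter ideals is needed; the hypothesis $\delta_1^K(R)$ finite feeds directly into the lemma. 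Your proposal identifies the right crux but does not close it, so as written it does not constitute a proof of $(3)\Rightarrow(1)$.
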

\begin{proof}
\eqref{GCM1} $\Rightarrow$ \eqref{GCM2}: Follows from Proposition \ref{proposition:g1}\eqref{proposition:g1-p1-a}.\\
\eqref{GCM2} $\Rightarrow$ \eqref{GCM3}: Since $\delta_1^K(R) \subseteq \Lambda_1^K(R),$ the assertion follows.\\
\eqref{GCM3} $\Rightarrow$ \eqref{GCM1}: 
We may assume that $R$ is complete. Since $\delta_1^K(R)$ is a finite set,  by \eqref{eqn-g1}, the set $S(K):=\{e_1(Q,K)| Q \text{ is a parameter ideal of $R$ and } Q\subseteq K\}$ is finite. 
Let $l$ be an integer such that $\m^l\subseteq K.$ Then the set 
 $$\{e_1(Q,K)| Q  =(x_1,\ldots,x_d)\subseteq \m^l \mbox{ is a parameter ideal of }R \text{ which is a $d$-sequence for }K\}\subseteq S(K)$$
 is finite. Since $R$ is unmixed, $K$ is an unmixed $R$-module. Therefore by \cite[Lemma 4.1]{vanishing2}, $K$ is a generalized Cohen-Macaulay $R$-module. Hence by Proposition \ref{prop-basic1-gcm}\eqref{prop-basic1-gcm-2}, $R$ is generalized Cohen-Macaulay.
\end{proof}

For a finitely generated $R$-module $M,$ we set $\assh_R M = \{\p \in \ass_R M|\dim R/\p=\dim M\}.$ Let 
$(0_M)=\mathop\bigcap\limits_{\p \in \ass_R M} M(\p)$ be a primary decomposition of $(0_M)$ in $M,$ where $M(\p)$ is a $\p$-primary submodule of $M$ for 
each $\p \in \ass_R M.$ The $R$-submodule $U_M(0):= \mathop\bigcap\limits_{\p \in \assh_R M} M(\p)$ is called the {\it unmixed component} of $M.$ 
 
 In order to prove the next theorem we need a modified version of \cite[Lemma 4.3]{vanishing2}.
 \begin{lemma}\label{lemma-0}
  Let $(R,\m)$ be a Noetherian local ring and $M$ a finitely generated $R$-module with $\dim M=r\geq 2$. 
  Let $K$ be an $\m$-primary ideal of $R.$ Assume that there exists an integer $t\geq 0$ such that $e_1(Q,M)\geq -t$ for every parameter 
  ideal $Q\subseteq K$ for $M.$ Then $\dim U_M(0)\leq r-2$.
 \end{lemma}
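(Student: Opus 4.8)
The plan is to argue by contradiction, following the argument of \cite[Lemma 4.3]{vanishing2} but carrying it out using only parameter ideals contained in $K$. Set $N:=U_M(0)$ and $M':=M/N$. By the standard properties of the unmixed component, $\ass_R M'=\assh_R M$ (so $M'$ is unmixed of dimension $r$, and is nonzero since $\dim N<r$), while $\ass_R N\subseteq \ass_R M\setminus\assh_R M$, so $\dim N\leq r-1$. It therefore suffices to rule out the case $\dim N=r-1$, which I will assume from now on.

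First I would record the identity
\[
e_1(Q,M)=e_1(Q,M')-e_0(Q,N)\qquad\text{for every parameter ideal }Q\text{ of }M.
\]
This follows from the exact sequence $0\to N\to M\to M'\to 0$: one has $\ell_R(M/Q^nM)=\ell_R(M'/Q^nM')+\ell_R\big(N/(N\cap Q^nM)\big)$, and by the Artin--Rees lemma $\{N\cap Q^nM\}_{n}$ is a stable $Q$-filtration on $N$, so $\ell_R\big(N/(N\cap Q^nM)\big)$ agrees for $n\gg 0$ with a polynomial of degree $\dim N=r-1$ whose leading coefficient is $e_0(Q,N)/(r-1)!$. Comparing the coefficients of $n^r$ and of $n^{r-1}$ in the resulting equality of polynomials gives first $e_0(Q,M)=e_0(Q,M')$ and then the displayed identity.

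Next I would build a family of parameter ideals inside $K$ along which $e_0(-,N)$ is unbounded. Pick $\mathfrak{q}\in\ass_R N$ with $\dim R/\mathfrak{q}=r-1$ (this exists since $\dim N=r-1$), and note $\mathfrak{q}\supseteq\ann_R M$. Since $K$ is $\m$-primary, choose a parameter ideal $\mathfrak{a}=(a_1,\dots,a_r)$ of $M$ with $\mathfrak{a}\subseteq K$, and for $m\geq 1$ put $Q_m:=(a_1^m,\dots,a_r^m)\subseteq K^m\subseteq K$; each $Q_m$ is again a parameter ideal of $M$, so $e_1(Q_m,M)\geq -t$ by hypothesis. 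On the other hand, by the associativity formula for multiplicities, $e_0(Q_m,N)\geq \ell_{R_{\mathfrak{q}}}(N_{\mathfrak{q}})\,e_0(Q_m,R/\mathfrak{q})\geq e_0(Q_m,R/\mathfrak{q})$ (here $\ell_{R_{\mathfrak{q}}}(N_{\mathfrak{q}})\geq 1$ because $\mathfrak{q}$ is a minimal prime of $\supp N$ lying in $\ass_R N$). Since $\mathfrak{q}\supseteq\ann_R M$, the ideal $\mathfrak{a}(R/\mathfrak{q})$ is $\m$-primary and $Q_m\subseteq\mathfrak{a}^m$, so $e_0(Q_m,R/\mathfrak{q})\geq e_0(\mathfrak{a}^m,R/\mathfrak{q})=m^{r-1}e_0(\mathfrak{a},R/\mathfrak{q})\geq m^{r-1}$. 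Hence $e_0(Q_m,N)\geq m^{r-1}$.

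Finally, combining the identity with the negativity of the first Hilbert coefficient of parameters on the unmixed module $M'$, namely $e_1(Q_m,M')\leq 0$, we obtain $-t\leq e_1(Q_m,M)=e_1(Q_m,M')-e_0(Q_m,N)\leq -m^{r-1}$ for all $m\geq 1$, which is impossible since $r\geq 2$. Therefore $\dim U_M(0)\leq r-2$. The step I expect to be the main obstacle is precisely the input $e_1(Q_m,M')\leq 0$: the rest is routine length/multiplicity bookkeeping, but this is where the unmixedness of $M'$ is genuinely used, and one must invoke (or reprove) the negativity property for unmixed modules — for $r=2$ this reduces, after cutting $M'$ by a superficial element of $K$, to the fact that $e_1$ of a one-dimensional module equals $-\ell_R(\h^0_\m(-))\leq 0$. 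A secondary point to watch is that one may pass to the $\m$-adic completion without changing any of these invariants, so that "unmixed" may be read in the sense of $\widehat{R}$ throughout.
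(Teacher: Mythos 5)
Your proposal is correct and follows essentially the same route as the paper: a contradiction via the exact sequence $0\to U_M(0)\to M\to M/U_M(0)\to 0$, the coefficient identity $e_1(Q,M)=e_1(Q,M/U_M(0))-(\text{multiplicity term for }U_M(0))$, the nonpositivity of $e_1$ of parameter ideals on the quotient, and the unboundedness of the multiplicity term along powers of a parameter ideal contained in $K$; the paper implements the last two steps by choosing a system of parameters with $x_rU_M(0)=0$ and using Artin--Rees to identify the correction term as $e_0\bigl((x_1^s,\ldots,x_{r-1}^s),U_M(0)\bigr)\geq s$, whereas you use the good-filtration identity with $e_0(Q_m,U_M(0))$ and the associativity formula, which is an equivalent bookkeeping. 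The step you flag as the main obstacle, $e_1(Q_m,M')\leq 0$, needs neither unmixedness nor completion: it holds for every finitely generated module and every parameter ideal by \cite[Theorem 3.6]{msv}, which is exactly the result the paper invokes for $M/U_M(0)$.
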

\begin{proof}
Let $U=U_M(0)$ and $T=M/U.$ Since $U_\p=0$ for all $\p \in \assh_R(M),$ $\dim U \leq r-1.$ Suppose $\dim U=r-1.$ Choose a system of parameters $(x_1,\ldots,x_r)$ for $M$ such that $x_rU=0.$ Since $\m^l\subseteq K$ for some 
integer $l\geq 1,$ $Q=(x_1^s,\ldots,x_r^s)\subseteq K$ for all $s\geq l.$ Let $s>\max\{l,t\}.$ Consider the exact sequence
$$0\to U/(Q^{n+1}M\cap U) \to M/Q^{n+1}M\to T/Q^{n+1}T\to 0.$$
This gives $$\ell_R(M/Q^{n+1}M)=\ell_R(T/Q^{n+1}T)+\ell_R(U/(Q^{n+1}M\cap U)).$$
By Artin-Rees Lemma there exists an integer $k\geq 0$ such that $Q^{n}M\cap U=Q^{n-k}(Q^kM\cap U)$ for all $n\geq k.$ Let $U^{\prime}=Q^kM\cap U$ and 
$q=(x_1^s,\ldots,x_{r-1}^s).$ Since $Q^{n-k}U^{\prime}=q^{n-k}U^{\prime}$ for all $n \geq k,$ we get 
$$\ell_R(M/Q^{n+1}M)=\ell_R(T/Q^{n+1}T)+\ell_R(U^{\prime}/q^{n+1-k}U^{\prime})+\ell_R(U/U^{\prime}) \mbox{ for all }n\geq k.$$
This implies that $-t\leq e_1(Q,M)=e_1(Q,T)-e_0(q,U^{\prime}).$
 Since $e_0(q,U^{\prime})=e_0(q,U)$ and $e_1(Q,T)\leq 0$ by \cite[Theorem 3.6]{msv}, we get
$$s\leq s^{r-1}e_0((x_1,\ldots,x_{r-1}),U)=e_0(q,U)=e_1(Q,T)-e_1(Q,M)\leq t,$$
which is a contradiction. Thus $\dim U\leq r-2.$
\end{proof}

In the following theorem we give equivalent conditions for the finiteness of the set $\Lambda_1^K(R)$ in any Noetherian local ring.

 \begin{theorem}\label{thm-main2}
 Let $(R,\m)$ be a Noetherian local ring of dimension $d\geq 2.$ We set $U=U_{\widehat{R}}(0).$ Then the following conditions are equivalent:
 \begin{enumerate}
  \item \label{eqn-thm-main2-2} $\dim_{\widehat{R}} U\leq d-2$ and $\widehat{R}/U$ is a generalized Cohen-Macaulay ring;
  \item \label{eqn-thm-main2-1} $\Lambda_1^K(R)$ is a finite set;
  \item \label{thm-main2-part2} $\delta_1^K(R)$ is a finite set.
    \end{enumerate}
 When this is the case, we have 
 $$-\mathop\sum\limits_{i=1}^{d-1}\binom{d-2}{i-1}\ell_R(\h_{\m}^i(\widehat{R}/U))-\ell_R(R/K) \leq  g_1^{K}(Q)\leq 0.  $$
 for every parameter ideal $Q$ of $R.$
\end{theorem}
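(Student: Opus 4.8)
The plan is to treat $K$ as an $R$-module. Since $d\ge 2$, \eqref{eqn-g1} gives $g_1^K(Q)=e_1(Q,K)$ for every parameter ideal $Q$, so all three sets are governed by the Hilbert coefficient $e_1$ of $K$, and the conditions on $\widehat R/U$ will be read off by comparing $\widehat R/U$ with $K$. First I would reduce to the complete case. On the one hand $g_1^K(Q)=e_1(Q,K)=e_1(Q\widehat R,\widehat K)=g_1^{\widehat K}(Q\widehat R)$ with $\widehat K=K\widehat R$; on the other hand, given a parameter ideal $\mathfrak q=(x_1,\dots,x_d)$ of $\widehat R$ with $\mathfrak q\subseteq \widehat K$, one can choose $y_i\in R$ with $y_i\equiv x_i$ modulo $\mathfrak q\m$ (since $R\to \widehat R/\mathfrak q\m$ is surjective), and then $\mathfrak q=(y_1,\dots,y_d)\widehat R$ by Nakayama, with $Q:=(y_1,\dots,y_d)$ a parameter ideal of $R$ satisfying $Q\subseteq \widehat K\cap R=K$ and $g_1^{\widehat K}(\mathfrak q)=g_1^K(Q)$. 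Hence $\Lambda_1^{\widehat K}(\widehat R)=\Lambda_1^K(R)$ and $\delta_1^{\widehat K}(\widehat R)=\delta_1^K(R)$ as sets, while $U=U_{\widehat R}(0)$ is unaffected, so I may assume $R=\widehat R$ and $U=U_R(0)$ (the bound for general $R$ then follows from the complete case since $g_1^K(Q)=g_1^{\widehat K}(Q\widehat R)$).

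The crucial step is a comparison between $K$ and $R/U$. Because $K$ is $\m$-primary, $K_\p=R_\p$ for every $\p\neq\m$; therefore $\assh_R K=\assh_R R$, and for a minimal prime $\p$ the $\p$-primary component of $0$ in $K$ equals $\ker(K\to K_\p)=K\cap\ker(R\to R_\p)$. Intersecting over $\p\in\assh_R R$ gives $U_K(0)=K\cap U$, whence $\dim U_K(0)=\dim U$ (as $U/(K\cap U)\hookrightarrow R/K$ has finite length) and $K/U_K(0)=(K+U)/U$ is a finite-colength submodule of $R/U$. The long exact local cohomology sequence of $0\to (K+U)/U\to R/U\to R/(K+U)\to 0$, whose last term has finite length, then shows that $R/U$ is generalized Cohen-Macaulay if and only if $K/U_K(0)$ is. Finally, whenever $\dim U\le d-2$, killing the submodule $U_K(0)$ of dimension $\le d-2$ changes neither $e_0$ nor $e_1$, so $g_1^K(Q)=e_1(Q,K)=e_1(Q,K/U_K(0))=g_1^{\overline K}(\overline Q)$ for every parameter ideal $Q$, where $\overline{(\,\cdot\,)}$ denotes images in $R/U$.

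With this in hand the three implications are short. For \eqref{eqn-thm-main2-2}$\Rightarrow$\eqref{eqn-thm-main2-1}: if $\dim U\le d-2$ and $R/U$ is generalized Cohen-Macaulay, apply Proposition \ref{proposition:g1}\eqref{proposition:g1-p1-a} to the $d$-dimensional generalized Cohen-Macaulay ring $R/U$ and the $\m$-primary ideal $\overline K$; together with $g_1^K(Q)=g_1^{\overline K}(\overline Q)$ and $\ell_R((R/U)/\overline K)=\ell_R(R/(K+U))\le \ell_R(R/K)$ this gives
\[
-\sum_{i=1}^{d-1}\binom{d-2}{i-1}\ell_R(\h^i_\m(R/U))-\ell_R(R/K)\ \le\ g_1^K(Q)\ \le\ 0
\]
for every $Q$, establishing both the finiteness of $\Lambda_1^K(R)$ and the stated estimate. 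The implication \eqref{eqn-thm-main2-1}$\Rightarrow$\eqref{thm-main2-part2} is immediate since $\delta_1^K(R)\subseteq\Lambda_1^K(R)$. For \eqref{thm-main2-part2}$\Rightarrow$\eqref{eqn-thm-main2-2}: finiteness of $\delta_1^K(R)$ yields $t\ge 0$ with $e_1(Q,K)\ge -t$ for all parameter ideals $Q\subseteq K$ of $R$, so Lemma \ref{lemma-0} with $M=K$ forces $\dim U_K(0)\le d-2$, hence $\dim U\le d-2$; then $e_1(Q,K/U_K(0))=e_1(Q,K)\ge -t$ for all such $Q$, and since $K/U_K(0)$ is unmixed one argues as in the proof of Theorem \ref{GCM} (restricting to parameter ideals contained in $\m^l\subseteq K$ that are $d$-sequences) and applies \cite[Lemma 4.1]{vanishing2} to conclude that $K/U_K(0)$, and hence $R/U$, is generalized Cohen-Macaulay.

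The step I expect to be the main obstacle is the two-way transfer between the (possibly non-unmixed) ring $R$ and the module $K$: proving $U_K(0)=K\cap U$ cleanly, carrying the generalized Cohen-Macaulay property between $K/U_K(0)$ and $R/U$ through the finite-length comparison, and, most importantly, applying Lemma \ref{lemma-0} and the module-theoretic input of \cite{vanishing2} to the unmixed module $K/U_K(0)$ rather than to $K$ itself. The completion reduction, ensuring that $\Lambda_1^K(R)$ and $\delta_1^K(R)$ are literally unchanged, is a secondary point handled by the Nakayama lifting above.
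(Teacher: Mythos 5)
Your proposal is correct and follows essentially the same route as the paper's proof: reduce to the complete case, use $g_1^K(Q)=e_1(Q,K)$, apply Lemma \ref{lemma-0} together with the identification $U_K(0)=U\cap K$ to get $\dim U\le d-2$, pass to $R/U$, and invoke Proposition \ref{proposition:g1}\eqref{proposition:g1-p1-a} for the finiteness direction and the stated bound. The only difference is cosmetic: where the paper cites \cite[Lemma 3.6]{saloni-1}, \cite[Remark 4.4]{vanishing2} and then applies Theorem \ref{GCM} to the ring $R/U$, you reprove the same transfers directly (the invariance of $e_1$ under killing the submodule $U_K(0)$ of dimension at most $d-2$, the finite-colength local cohomology comparison as in Lemma \ref{prop-basic1-gcm}\eqref{prop-basic1-gcm-2}, and \cite[Lemma 4.1]{vanishing2} applied to the unmixed module $K/U_K(0)$).
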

\begin{proof}
 We may assume that $R$ is complete. \\
\eqref{eqn-thm-main2-2} $\Rightarrow$ \eqref{eqn-thm-main2-1}: Since $R/U$ is a generalized Cohen-Macaulay ring, by 
Proposition \ref{proposition:g1}\eqref{proposition:g1-p1-a}, the set $\Lambda_1^{KR/U}(R/U)$ is finite. Since $g_1^K(R)=g_1^{K R/U}(R/U),$  
by \cite[Lemma 3.6]{saloni-1}, the set $\Lambda_1^K(R)$ is finite. (Note that we do not need $Q\subseteq K$ in \cite[Lemma 3.6]{saloni-1}.)\\
 \noindent
 \eqref{eqn-thm-main2-1} $\Rightarrow$ \eqref{thm-main2-part2}: Since $\delta_1^K(R) \subseteq \Lambda_1^K(R), $ the assertion follows. \\
  \noindent  \eqref{thm-main2-part2} $\Rightarrow$ \eqref{eqn-thm-main2-2}: From \eqref{eqn-g1}, $e_1(Q,K)=g_1^K(Q).$ Thus $\delta_1^K(R)$ is finite implies 
  that there exists an integer $t\geq 0$ such that $e_1(Q,K)\geq -t$ for every parameter ideal $Q\subseteq K.$ 
  Hence, by Lemma \ref{lemma-0}, $\dim U_K(0)\leq d-2.$ Note that $U \cap K =U_K(0).$ 
 Since $\dim U= \text{max}\{\dim(U\cap K), \dim (U/(U\cap K)) \}$ and $ \dim (U/(U\cap K))=0,$ 
 $\dim U= \dim (U\cap K)= \dim (U_K(0))\leq d-2.$ Hence, by \cite[Lemma 3.6]{saloni-1}, $g_1^K(Q)=g_1^{K R/U}(QR/U).$ Using \cite[Remark 4.4]{vanishing2} we conclude that $\delta_1^{K R/U}(R/U)$ is 
 finite. Hence, by Theorem \ref{GCM}, $R/U$ is generalized Cohen-Macaulay. 
 
 The last assertion follows from Proposition \ref{proposition:g1}\eqref{proposition:g1-p1-a}.
 \end{proof}
 
In the following theorem we give a sufficient condition for $R$ to be Buchsbaum.

\begin{theorem} \label{thm: Buchsbaum}
  Let $(R,\m)$ be a Noetherian local ring of dimension $d \geq 2$ and $K$ an $\m$-primary ideal of $R.$ Then the following assertions hold.
  \begin{enumerate}
   \item \label{thm: Buchsbaum-p1}
    Suppose $R$ is unmixed and $|\Lambda_1^K(R)|=1.$  Then $R$ is Buchsbaum. Further, $|\Lambda_i^K(R)|=1$ for all $1\leq i\leq d.$
    \item \label{thm: Buchsbaum-p2} If $R$ is Buchsbaum then $|\Lambda_1^\m(R)|=1.$ 
  \end{enumerate}
  \end{theorem}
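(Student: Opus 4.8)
The plan is to leverage the module-theoretic dictionary established in Lemma~\ref{prop-basic1-gcm} and Proposition~\ref{proposition:g1}. For part~\eqref{thm: Buchsbaum-p1}, I would start from the hypothesis $|\Lambda_1^K(R)|=1$ and the identity $|\Lambda_1^K(R)|=|\Lambda_1(K)|$ coming from \eqref{eqn-g1}, so that $|\Lambda_1(K)|=1$. Since $R$ is unmixed, $K$ is an unmixed $R$-module (here one uses that $K$ is $\m$-primary, hence $\dim K = \dim R$ and $\ass K \subseteq \ass R$), so by \cite[Theorem 5.4]{vanishing2} the module $K$ is Buchsbaum. Now $\depth R > 0$: indeed unmixedness forbids $\m \in \ass R$ when $d \geq 2$, so $\h^0_\m(R)=0$. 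Thus Lemma~\ref{prop-basic1-gcm}\eqref{prop-basic1-gcm-3} applies and $R$ is Buchsbaum. For the ``further'' claim, once $R$ (equivalently $K$) is Buchsbaum, every parameter ideal $Q$ is standard for $K$, so Proposition~\ref{proposition:g1}\eqref{proposition:g1-p1-b} gives an explicit formula for $g_i^K(Q)$ in terms of the lengths $\ell_R(\h^j_\m(K))$ and $\ell_R(R/K)$, none of which depends on $Q$; hence $|\Lambda_i^K(R)|=1$ for all $1 \leq i \leq d$.

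For part~\eqref{thm: Buchsbaum-p2} I would take $K=\m$. Then $g_1^\m(Q)=e_1(Q,\m)$ by \eqref{eqn-g1} (since $i=1\neq d$), so it suffices to show $e_1(Q,\m)$ is independent of the parameter ideal $Q$. The natural route is the exact sequence $0 \to \m \to R \to R/\m \to 0$, which (as in the proof of Lemma~\ref{prop-basic1-gcm}) shows $\ell_R(R/Q\m) = \ell_R(R/\m) + \ell_R(\m/Q\m)$ and gives $e_0(Q,\m)=e_0(Q,R)$; combined with $e_1(Q,\m) = e_1(Q,R)$ one reduces to showing $e_1(Q,R)$ is constant over parameter ideals of a Buchsbaum ring. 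But this is exactly \cite[Corollary 4.2]{trung-gcm} (already invoked in Proposition~\ref{proposition:g1}): in a generalized Cohen--Macaulay ring, every parameter ideal that is standard — and in a Buchsbaum ring \emph{every} parameter ideal is standard — satisfies $e_1(Q,R) = -\sum_{j=1}^{d-1}\binom{d-2}{j-1}\ell_R(\h^j_\m(R))$, which is manifestly independent of $Q$. Alternatively, one can quote Lemma~\ref{prop-basic1-gcm}\eqref{prop-basic1-gcm-4} that $\m$ is a Buchsbaum module and again apply \cite[Corollary 4.2]{trung-gcm} to $\m$ directly. Either way $|\Lambda_1^\m(R)|=1$.

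The main obstacle I anticipate is in part~\eqref{thm: Buchsbaum-p1}: verifying the hypotheses of \cite[Theorem 5.4]{vanishing2} for the module $K$ rather than for $R$, and making sure the completion does not interfere (one passes to $\widehat{R}$, notes $\widehat{K} = K\widehat{R}$ is still unmixed over $\widehat{R}$ since $R$ unmixed means $\widehat{R}$ unmixed, and that Hilbert coefficients and the Buchsbaum property are insensitive to completion). A secondary subtlety is confirming that $\depth R > 0$ so that Lemma~\ref{prop-basic1-gcm}\eqref{prop-basic1-gcm-3} is applicable — this is where unmixedness together with $d\geq 2$ is genuinely used. Everything else is a direct citation or a length bookkeeping already carried out in the preliminaries.
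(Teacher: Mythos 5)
Your part \eqref{thm: Buchsbaum-p1} is essentially the paper's argument: the paper simply quotes Proposition~\ref{proposition:g1}\eqref{proposition:g1-p2} (whose proof is exactly your inline combination of \eqref{eqn-g1} with \cite[Theorem 5.4]{vanishing2}) to get that $K$ is Buchsbaum, then applies Lemma~\ref{prop-basic1-gcm}\eqref{prop-basic1-gcm-3} and Proposition~\ref{proposition:g1}\eqref{proposition:g1-p1-b}; your explicit check that unmixedness with $d\geq 2$ forces $\depth R>0$ is a point the paper leaves tacit, and it is correct.

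For part \eqref{thm: Buchsbaum-p2}, however, your ``primary'' route contains a false step: it is not true that $e_1(Q,\m)=e_1(Q,R)$. By \eqref{eqn-fi-gi-ei} with $i=0$ one has $e_1(Q,\m)=g_1^\m(Q)=e_1(Q,R)-f_0^\m(Q)$, and $f_0^\m(Q)$ is the multiplicity of the fiber cone $F_\m(Q)$, which is at least $1$; for instance in a regular local ring with $Q=\m$ one gets $e_1(Q,R)=0$ but $e_1(Q,\m)=-1$. So reducing the constancy of $g_1^\m(Q)$ to the constancy of $e_1(Q,R)$ does not work as stated (and would anyway require knowing $f_0^\m(Q)$ is constant, which is not free). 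Fortunately your stated alternative is exactly the paper's proof and is the one to keep: by Lemma~\ref{prop-basic1-gcm}\eqref{prop-basic1-gcm-4} the module $\m$ is Buchsbaum, so every parameter ideal is standard for $\m$, and then \cite[Corollary 4.2]{trung-gcm}, as packaged in Proposition~\ref{proposition:g1}\eqref{proposition:g1-p1-b} applied with $K=\m$, gives a formula for $g_1^\m(Q)$ independent of $Q$, whence $|\Lambda_1^\m(R)|=1$.
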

  \begin{proof}
  \eqref{thm: Buchsbaum-p1}: By Proposition \ref{proposition:g1}\eqref{proposition:g1-p2}, we get that $K$ is a Buchsbaum $R$-module.  
 Hence by Lemma \ref{prop-basic1-gcm}\eqref{prop-basic1-gcm-3},
 $R$ is a Buchsbaum ring. Since every parameter ideal of $R$ is standard for $K,$ by Proposition \ref{proposition:g1}\eqref{proposition:g1-p1-b}, $|\Lambda_i^K(R)|=1$ for all $1\leq i\leq d.$ 
 
 \noindent
\eqref{thm: Buchsbaum-p2}: By Lemma \ref{prop-basic1-gcm}\eqref{prop-basic1-gcm-4}, $\m$ is a Buchsbaum $R$-module. Thus every parameter ideal $Q$ of $R$ is standard for $\m$. Now by 
Proposition \ref{proposition:g1}\eqref{proposition:g1-p1-b}, $|\Lambda_1^\m(R)|=1.$ 

 \end{proof}
\begin{theorem}\label{thm-main3}
Let $(R,\m)$ be a Noetherian local ring of dimension $d\geq 2.$ Let $U=U_{\widehat{R}}(0).$ Then the following conditions are equivalent:
  \begin{enumerate}
   \item \label{thm-main3-a} $\dim_{\widehat{R}} U \leq d-2$ and $\widehat{R}/U$ is Buchsbaum; 
   \item \label{thm-main3-b} $|\Lambda_1^{\m}(R)|=1$. 
  \end{enumerate}
\end{theorem}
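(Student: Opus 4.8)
Here is the plan. I would closely follow the proof of Theorem~\ref{thm-main2}, feeding in the Buchsbaum dichotomy of Theorem~\ref{thm: Buchsbaum} wherever that argument used the generalized Cohen--Macaulay one. As there, I would first reduce to the case that $R$ is complete: $g_1^{\m}(Q)$ is read off from the finite lengths $\ell_R(R/\m Q^n)$, which are unchanged under completion, and every parameter ideal of $\widehat R$ is the extension of a parameter ideal of $R$ after perturbing its generators modulo $\widehat\m^N$ for $N$ with $\widehat\m^N\subseteq\widehat\m\widehat Q$, so $\Lambda_1^{\m}(R)=\Lambda_1^{\widehat\m}(\widehat R)$. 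After this reduction $U=U_R(0)$; since $\dim U\le d-2<d$ forces $U\subseteq\m$, the maximal ideal of $R/U$ is the image $\overline\m$ of $\m$, and $R/U$ is unmixed of dimension $d\ge2$. I write $\overline Q$ for the image of an ideal $Q$ in $R/U$.

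For \eqref{thm-main3-a}$\Rightarrow$\eqref{thm-main3-b} I would argue directly. Since $\dim U\le d-2$, \cite[Lemma 3.6]{saloni-1} gives $g_1^{\m}(Q)=g_1^{\overline\m}(\overline Q)$ for every parameter ideal $Q$ of $R$. As $R/U$ is assumed Buchsbaum, Theorem~\ref{thm: Buchsbaum}\eqref{thm: Buchsbaum-p2} gives $|\Lambda_1^{\overline\m}(R/U)|=1$, so every $g_1^{\m}(Q)$ equals this single value; since parameter ideals exist, $|\Lambda_1^{\m}(R)|=1$.

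For \eqref{thm-main3-b}$\Rightarrow$\eqref{thm-main3-a} I would start from $|\Lambda_1^{\m}(R)|=1$, hence in particular $\Lambda_1^{\m}(R)$ finite, and invoke Theorem~\ref{thm-main2} to obtain $\dim U\le d-2$ and that $R/U$ is generalized Cohen--Macaulay. Using $\dim U\le d-2$ and \cite[Lemma 3.6]{saloni-1} again, $g_1^{\m}(Q)=g_1^{\overline\m}(\overline Q)$ for all parameter ideals $Q$ of $R$, so the set $\{\,g_1^{\overline\m}(\overline Q):Q\text{ a parameter ideal of }R\,\}$ is a singleton. Next I would appeal to \cite[Remark 4.4]{vanishing2} to see that every parameter ideal of $R/U$ is, for the purpose of computing $g_1^{\overline\m}$, one of these reductions $\overline Q$, so that $\Lambda_1^{\overline\m}(R/U)$ is itself a singleton. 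Finally, $R/U$ being unmixed of dimension $\ge2$, Theorem~\ref{thm: Buchsbaum}\eqref{thm: Buchsbaum-p1} yields that $R/U$ is Buchsbaum, which is \eqref{thm-main3-a}.

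The hard part will be the penultimate step of \eqref{thm-main3-b}$\Rightarrow$\eqref{thm-main3-a}: constancy of $g_1^{\overline\m}(\overline Q)$ over the reductions $\overline Q$ of parameter ideals of $R$ does not a priori cover all parameter ideals of $R/U$, so $|\Lambda_1^{\overline\m}(R/U)|=1$ is not formally immediate. This is exactly the lifting statement \cite[Remark 4.4]{vanishing2}, which is available precisely because $\dim U\le d-2$; a self-contained substitute would be to lift a system of parameters of $R/U$ to a system of parameters of $R$ by prime avoidance, correcting the lift by an element of $U$ (legitimate since $\dim U\le d-2$) so that it stays a system of parameters of $R$, and then to apply \cite[Lemma 3.6]{saloni-1}.
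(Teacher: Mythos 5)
Your proposal is correct and follows essentially the same route as the paper: reduce to the complete case, pass between $R$ and $R/U$ via \cite[Lemma 3.6]{saloni-1}, and apply Theorem~\ref{thm: Buchsbaum} in both directions. The only minor difference is that you extract $\dim U\leq d-2$ from Theorem~\ref{thm-main2}, whereas the paper gets it from $|\Lambda_1(\m)|=1$ together with \cite[Theorem 5.5]{vanishing2} applied to the module $\m$; your explicit appeal to \cite[Remark 4.4]{vanishing2} (or the prime-avoidance lifting of parameter ideals from $R/U$ to $R$) supplies the lifting step that the paper uses implicitly at the corresponding point.
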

\begin{proof}
We may assume that $R$ is complete. 

 \eqref{thm-main3-a}$\Rightarrow$ \eqref{thm-main3-b}: By \cite[Lemma 3.6]{saloni-1} and Theorem \ref{thm: Buchsbaum}\eqref{thm: Buchsbaum-p2}, we get $|\Lambda_1^{\m}(R)|=|\Lambda_1^{\m}(R/U)|=1$.

 \eqref{thm-main3-b}$\Rightarrow$ \eqref{thm-main3-a}: Since $|\Lambda_1^{\m}(R)|=1,$ by \eqref{eqn-g1},  $|\Lambda_1(\m)|=1.$ 
 Since $U_\m(0)=U,$ by \cite[Theorem 5.5]{vanishing2}, $\dim U=\dim U_{\m}(0)\leq d-2.$ Thus $g_1^{\m}(Q)=g_1^{\m R/U}(QR/U)$ by \cite[Lemma 3.6]{saloni-1}. Hence $|\Lambda_1^{\m}(R/U)|=|\Lambda_1^{\m}(R)|=1.$ Therefore, by Theorem \ref{thm: Buchsbaum}\eqref{thm: Buchsbaum-p1}, $R/U$ is Buchsbaum. 
\end{proof}

 We discuss below that for an arbitrary $\m$-primary ideal $K$ in a Buchsbaum local ring $R,$ $\Lambda_1^K(R)$ need not be singleton.

\begin{discussionbox}
 \label{disc:Buchs}
Suppose $(R,\m)$ is a Buchsbaum local ring of dimension $d \geq 2$ and $K$ is an $\m$-primary ideal of $R.$ Suppose $|\Lambda_1^K(R)|=1.$ 
Then, from \eqref{eqn-g1}, $|\Lambda_1(K)|=1.$ Further assume that $R$ is unmixed. Then, by \cite[Theorem 5.4]{vanishing2}, $K$ is a Buchsbaum $R$-module. Let $Q$ be an arbitrary parameter ideal of $R.$ Since 
$$\ell_R(R/Q^{n+1}K)=\ell_R(R/K)+\ell_R(K/Q^{n+1}K) \hspace{0.2cm} \mbox{ for all }n$$
and $Q$ is standard for $K, $ using \cite[Corollary 4.2]{trung-gcm}, we get
\begin{eqnarray*}
 \ell_R(R/Q^{n+1}K)=\ell_R(R/K)+\binom{n+d}{d}e_0(Q,K)+\sum_{i=1}^d\sum_{j=0}^{d-i} \binom{n+d-i}{d-i}\binom{d-i-1}{j-1}\ell_R(\h^j_\m(K))
\end{eqnarray*}
for all $n \geq 0.$ Putting $n=0$ and using \eqref{eqn:LongExactSeq} and \eqref{eqn:Iso}, we get 
\begin{equation}\label{eqn:EquForKBuch}
 \ell_R(R/QK)=e_0(Q,R)+I(R)+d \ell (R/K).
 \end{equation}
Also,
\begin{eqnarray}
\nonumber \ell_R(R/QK)&=&\ell_R(R/Q)+\ell_R(Q/QK)\\
\label{eqn:EquAlw} &=&e_0(Q,R)+I(R)+\ell_R(Q/QK).
\end{eqnarray}
Comparing \eqref{eqn:EquForKBuch} and \eqref{eqn:EquAlw}, we get $\ell_R(Q/QK)=d\ell_R(R/K)$ for every parameter ideal $Q$ of $R.$ This need not be true even in regular local rings. 

However, we expect that $|\delta_1^K(R)|=1$ for an arbitrary $\m$-primary ideal $K$ in a Buchsbaum local ring. We have neither a proof nor a counter-example for this statement.
\end{discussionbox}

\begin{remark}
 \begin{enumerate}
  \item 
 Suppose $R$ is a generalized Cohen-Macaulay local ring. Then, by \cite[Section 4]{vanishing2}, $\Lambda_1(R)$ is finite. By Proposition \ref{proposition:g1}\eqref{proposition:g1-p1}, $\Lambda_1^K(R)$ is 
 finite. Hence, from \eqref{eqn-fi-gi-ei}, the set $\{f_0^K(Q)~|~Q \mbox{ is a parameter ideal of } R\}$ is finite.
 \item Suppose $R$ is Buchsbaum. Then, by \cite[Section 5]{vanishing2}, $\Lambda_1(R)$ is singleton. By  Theorem \ref{thm: Buchsbaum}\eqref{thm: Buchsbaum-p2}, $|\Lambda_1^\m(R)|=1.$ Hence, from \eqref{eqn-fi-gi-ei}, the set $\{f_0^\m(Q)~|~Q \mbox{ is a parameter ideal of } R\}$ is singleton.
  \end{enumerate}
\end{remark}

\section{The set $\Lambda_i^K(R)$} \label{SetOfgi}
In this section we give a necessary and sufficient condition for $\Lambda_i^K(R)$ to be finite for all $1 \leq i \leq d$ (Theorem \ref{thm-main-gcm2}). For this 
purpose we improve a result of Goto and Ozeki \cite[Theorem 1.1]{goto-ozeki} and generalize it for modules (Theorem \ref{thm-generalize-for-modules}). 
We also obtain an equivalent criterion for $|\Lambda_i(M)|=1$ for all $1 \leq i \leq r-\depth M$ (Theorem \ref{thm:charcOfBuchs}). As a consequence we 
obtain a necessary condition for $|\Lambda_i^K(R)|=1$ for all $1 \leq i \leq d-1$ (Theorem \ref{thm-main-Buchs}).
%

We need few lemmas in order to prove the finiteness of $\Lambda_i^K(R)$ in a generalized Cohen-Macaulay local ring. First we recall the following lemma 
from \cite{trung-gcm}.

\begin{lemma}
\label{lemma-for-uniform-bounds-trung} \rm \cite[Lemma 1.7]{trung-gcm} Let $M$ be a generalized Cohen-Macaulay module of dimension $r$ and $Q=(x_1,\ldots,x_r)$ a parameter ideal for $M.$ Then $I(M/x_1M ) \leq I(M).$
\end{lemma}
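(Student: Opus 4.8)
The plan is to derive the inequality from the formula $I(N)=\sum_{i=0}^{s-1}\binom{s-1}{i}\ell_R(\h^i_\m(N))$ of \eqref{eqn:I(M)}, valid for a generalized Cohen--Macaulay module $N$ of dimension $s$, thereby reducing the statement to a comparison of the lengths of the local cohomology modules of $M$ and of $M/x_1M$. First I would pass to the $\m$-adic completion: this changes neither $I(-)$, nor the generalized Cohen--Macaulay property, nor the module $M/x_1M$, and $x_1$ stays a parameter element, so we may assume $R$ is complete, hence catenary and a homomorphic image of a regular local ring.

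The first substantive step is to check that $N:=(0:_Mx_1)$ has finite length and that $M/x_1M$ is again generalized Cohen--Macaulay, necessarily of dimension $r-1$. I would localize at an arbitrary prime $\p\in\supp M\setminus\{\m\}$: there $M_\p$ is Cohen--Macaulay, so its associated primes are minimal, and (by the dimension formula, available since $R$ is a homomorphic image of a regular local ring) each of them is of the form $\mathfrak qR_\p$ with $\mathfrak q\in\assh_R M$. Since $\dim(0:_Mx_1)\le\dim M/x_1M=r-1<r$, the element $x_1$ lies in no prime of $\assh_R M$, hence is a nonzerodivisor on $M_\p$; therefore $N_\p=0$, which gives $\ell_R(N)<\infty$, while $(M/x_1M)_\p=M_\p/x_1M_\p$ is Cohen--Macaulay (a Cohen--Macaulay module modulo a nonzerodivisor, or else of dimension $0$). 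Hence $M/x_1M$ is generalized Cohen--Macaulay.

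The core estimate is $\ell_R(\h^i_\m(M/x_1M))\le\ell_R(\h^i_\m(M))+\ell_R(\h^{i+1}_\m(M))$ for $0\le i\le r-2$. I would obtain it by splitting the four-term exact sequence $0\to N\to M\to M\to M/x_1M\to 0$, in which the middle map is multiplication by $x_1$, into $0\to N\to M\to x_1M\to 0$ and $0\to x_1M\to M\to M/x_1M\to 0$. Since $\ell_R(N)<\infty$, the first sequence yields $\h^i_\m(x_1M)\cong\h^i_\m(M)$ for $i\ge 1$ and shows $\h^0_\m(x_1M)$ is a quotient of $\h^0_\m(M)$; the long exact cohomology sequence of the second then presents $\h^i_\m(M/x_1M)$ as an extension of a submodule of $\h^{i+1}_\m(x_1M)$ by a quotient of $\h^i_\m(M)$, which gives the length bound (every module occurring has finite length, as $M$ and $M/x_1M$ are generalized Cohen--Macaulay). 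Substituting this into $I(M/x_1M)=\sum_{i=0}^{r-2}\binom{r-2}{i}\ell_R(\h^i_\m(M/x_1M))$, re-indexing the two resulting sums, and applying Pascal's identity $\binom{r-2}{i}+\binom{r-2}{i-1}=\binom{r-1}{i}$ collapses the bound to $\sum_{i=0}^{r-1}\binom{r-1}{i}\ell_R(\h^i_\m(M))=I(M)$, as wanted. (For $r=1$ the statement is trivial, both sides reducing to $0\le\ell_R(\h^0_\m(M))$.)

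The genuine obstacle, I expect, is the first substantive step: verifying that an arbitrary parameter element of a generalized Cohen--Macaulay module is filter-regular on it and that the quotient remains generalized Cohen--Macaulay. This is exactly the point at which a possible embedded prime of $M$ at $\m$ must be handled, and where the catenary hypothesis --- hence the reduction to the complete case --- is genuinely used; after that, the local cohomology exact sequences and the binomial bookkeeping are routine.
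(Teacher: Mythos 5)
The paper does not prove this lemma at all --- it is quoted verbatim from \cite[Lemma 1.7]{trung-gcm} --- so your proposal can only be judged on its own terms. Its skeleton is the standard argument and the cohomological part is correct: once one knows $\ell_R\bigl((0:_M x_1)\bigr)<\infty$, splitting $0\to (0:_M x_1)\to M\xrightarrow{x_1} M\to M/x_1M\to 0$ into two short exact sequences gives $\h^i_\m(x_1M)\cong \h^i_\m(M)$ for $i\geq 1$ and a surjection on $\h^0_\m$, whence $\ell_R(\h^i_\m(M/x_1M))\leq \ell_R(\h^i_\m(M))+\ell_R(\h^{i+1}_\m(M))$ for $0\leq i\leq r-2$; this simultaneously shows $M/x_1M$ is generalized Cohen--Macaulay (so \eqref{eqn:I(M)} applies to it), and Pascal's identity then yields $I(M/x_1M)\leq I(M)$. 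That bookkeeping is fine, as is the reduction to the complete case.

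The gap is exactly where you predicted it, and your proposed fix does not close it. You derive the finiteness of $(0:_M x_1)$ from ``$M_\p$ is Cohen--Macaulay for $\p\in\supp M\setminus\{\m\}$, and by the dimension formula its associated primes are $\mathfrak q R_\p$ with $\mathfrak q\in\assh_R M$.'' The second assertion does not follow from the first: completeness plus Cohen--Macaulayness on the punctured spectrum does not force all non-maximal associated primes of $M$ to have dimension $r$. For instance $R=M=k[[x,y,z]]/(xz,yz)$ is complete, Cohen--Macaulay at every non-maximal prime, yet has minimal primes $(z)$ and $(x,y)$ of dimensions $2$ and $1$; the parameter element $x$ satisfies $(0:_R x)\supseteq zR\cong k[[z]]$, which has infinite length. (This ring is of course not generalized Cohen--Macaulay --- which is precisely the point: the equidimensionality you need is a consequence of the finiteness of the modules $\h^i_\m(M)$, $i<r$, and cannot be recovered from the localized Cohen--Macaulayness plus catenarity that your argument actually invokes.) The correct repair is to quote the standard fact that for a generalized Cohen--Macaulay module $\ass_R M\setminus\{\m\}\subseteq\assh_R M$, equivalently that every parameter element for $M$ is filter-regular (see \cite{cst} or \cite[Section 1]{trung-gcm}); with that in hand, the rest of your argument goes through and is essentially the proof found in the cited sources.
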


In the next lemma we give a bound on the function $ \ell_R(M/Q^{n+1}M)$ in terms of $e_0(Q,M)$ and $I(M)$  
for a generalized Cohen-Macaulay module. A similar upper bound is given for $ \ell_R(R/Q^{n+1})$ in \cite[Lemma 1.1]{linh-trung2006}. A better lower bound is given for $ \ell_R(R/Q^{n+1})$ in terms of $e_0(Q,R)$ in \cite[Theorem 1.1]{hh}.  

\begin{lemma}\label{lemma-for-uniform-bounds-3}
Let $M$ be a generalized Cohen-Macaulay module of dimension $r>0$ and $Q$ a parameter ideal for $M.$ Then for all $n\geq 0,$
$$-r\binom{n+r-1}{r-1}I(M)\leq \ell_R(M/Q^{n+1}M)-e_0(Q,M)\binom{n+r}{r}\leq \binom{n+r-1}{r-1}I(M).$$
\end{lemma}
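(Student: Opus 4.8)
The plan is to argue by induction on $r=\dim M$, after the harmless reduction to an infinite residue field (passing to $R[t]_{\m R[t]}$ changes neither $\ell_R(M/Q^{n+1}M)$, nor $e_0(Q,M)$, nor the lengths $\ell_R(\h^i_\m(M))$, hence not $I(M)$, and preserves the generalized Cohen-Macaulay property).

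For the base case $r=1$ we have $I(M)=\ell_R(\h^0_\m(M))$; writing $H=\h^0_\m(M)$ and $\bar M=M/H$, the module $\bar M$ is Cohen-Macaulay of dimension $1$, the generator $x$ of $Q$ is $\bar M$-regular, and $\ell_R(\bar M/Q^{n+1}\bar M)=(n+1)e_0(Q,\bar M)=(n+1)e_0(Q,M)$. Counting lengths along $0\to H\to M\to\bar M\to 0$ gives $\ell_R(M/Q^{n+1}M)=(n+1)e_0(Q,M)+\ell_R(H)-\ell_R(Q^{n+1}M\cap H)$, so in fact $0\le \ell_R(M/Q^{n+1}M)-e_0(Q,M)\binom{n+1}{1}\le \ell_R(H)=I(M)$, which is stronger than required.

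For the inductive step ($r\ge 2$), pick (genericity, infinite residue field) a minimal generator $x_1$ of $Q$ that is a filter-regular and superficial element for $Q$ on $M$; this affects neither $\ell_R(M/Q^{n+1}M)$ nor $e_0(Q,M)$, and it gives $(0:_Mx_1)\subseteq\h^0_\m(M)$ and $e_0(Q,M)=e_0(\bar Q,N)$, where $N:=M/x_1M$ and $\bar Q$ is the parameter ideal of $N$ generated by the images of parameters completing $x_1$. By Lemma~\ref{lemma-for-uniform-bounds-trung}, $N$ is generalized Cohen-Macaulay of dimension $r-1$ with $I(N)\le I(M)$. Put $e:=e_0(Q,M)$, $I:=I(M)$, $h_M(k):=\ell_R(M/Q^{k+1}M)$, $h_N(k):=\ell_R(N/\bar Q^{k+1}N)$ and $c_k:=\ell_R\!\big((Q^{k+1}M:_Mx_1)/Q^kM\big)\ge 0$. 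Multiplication by $x_1$ gives the exact sequences
\[
0\to (Q^{k+1}M:_Mx_1)/Q^kM\to M/Q^kM\xrightarrow{\,x_1\,}M/Q^{k+1}M\to N/\bar Q^{k+1}N\to 0,
\]
so $h_M(k)-h_M(k-1)=h_N(k)-c_k$ with $h_M(-1)=0$. Summing over $0\le k\le n$, using $\sum_{k=0}^n\binom{k+r-1}{r-1}=\binom{n+r}{r}$, $\sum_{k=0}^n\binom{k+r-2}{r-2}=\binom{n+r-1}{r-1}$, the induction hypothesis for $N$ and $I(N)\le I$, we obtain
\[
e\binom{n+r}{r}-(r-1)\binom{n+r-1}{r-1}I-C_n\ \le\ h_M(n)\ \le\ e\binom{n+r}{r}+\binom{n+r-1}{r-1}I-C_n,
\]
where $C_n:=\sum_{k=0}^nc_k\ge 0$. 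The upper bound of the lemma is then immediate, and the lower bound reduces to proving $C_n\le\binom{n+r-1}{r-1}I$.

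This last estimate is the main obstacle. I would deduce it from the claim that $c_k\le\ell_R(\h^0_\m(M))$ for \emph{every} $k\ge0$: granting this, $C_n\le(n+1)\ell_R(\h^0_\m(M))\le\binom{n+r-1}{r-1}\ell_R(\h^0_\m(M))\le\binom{n+r-1}{r-1}I$, because $n+1\le\binom{n+r-1}{r-1}$ for $r\ge 2$ and $\ell_R(\h^0_\m(M))\le I(M)$. For the claim, with $\bar M=M/\h^0_\m(M)$ one has $x_1$ a nonzerodivisor on $\bar M$ (as $x_1$ is filter regular) and $\depth\bar M>0$, and I would argue that superficiality of $x_1$ forces $Q^{k+1}\bar M:_{\bar M}x_1=Q^k\bar M$ for \emph{all} $k$, i.e. $Q^{k+1}M:_Mx_1\subseteq Q^kM+\h^0_\m(M)$; then $(Q^{k+1}M:_Mx_1)/Q^kM$ embeds in $\h^0_\m(M)/(\h^0_\m(M)\cap Q^kM)$, whence $c_k\le\ell_R(\h^0_\m(M))$. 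The delicate part is obtaining this colon identity for all $k$, not merely for $k\gg0$; this is exactly where the generalized Cohen-Macaulay hypothesis and the full strength of the genericity of $x_1$ must be used. Granting it, the two displayed inequalities yield the asserted bounds for all $n\ge0$, completing the induction.
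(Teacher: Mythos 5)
Your overall architecture is the same as the paper's: induct on $r$, treat $r=1$ via $\h^0_\m(M)$, and for $r\ge 2$ use the four-term exact sequence coming from multiplication by $x_1$, then sum the resulting bounds using $\sum_{k=0}^n\binom{k+r-1}{r-1}=\binom{n+r}{r}$. You also correctly isolate where the difficulty sits: everything reduces to showing $C_n=\sum_{k=0}^n\ell_R\bigl((Q^{k+1}M:_Mx_1)/Q^kM\bigr)\le\binom{n+r-1}{r-1}I(M)$.

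But precisely at that point your argument has a genuine gap, and the route you sketch does not work. You propose the uniform bound $c_k\le\ell_R(\h^0_\m(M))$ for all $k$, derived from the claim that superficiality of $x_1$ forces $(Q^{k+1}\bar M:_{\bar M}x_1)=Q^k\bar M$ for \emph{every} $k\ge 0$, where $\bar M=M/\h^0_\m(M)$. Superficiality only gives this colon identity for $k\gg 0$; validity for all $k$ is equivalent to the initial form $x_1^*$ being a nonzerodivisor on $G_Q(\bar M)$, i.e.\ essentially to $\depth G_Q(\bar M)>0$, and generalized Cohen--Macaulayness of $\bar M$ does not guarantee this for an arbitrary parameter ideal (it does for Cohen--Macaulay $\bar M$, but that is exactly the case excluded here). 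Indeed your proposed bound is too strong: if $\h^0_\m(M)=0$ but $M$ is not Cohen--Macaulay, it would force $c_k=0$ for all $k$ and every parameter ideal, which fails in general; the true bound must be allowed to grow with $k$. The paper closes this step by quoting Claim~1 on p.~351 of Cuong--Long--Truong (\cite{cuong-long-truong2015}), which gives $0\le c_k\le\binom{k+r-2}{r-2}I(M)$ for all $k\ge 0$, for an arbitrary generator $x_1$ of an arbitrary parameter ideal (no superficiality, no infinite residue field needed); summing gives exactly $C_n\le\binom{n+r-1}{r-1}I(M)$, and the equality $e_0(Q,M/x_1M)=e_0(Q,M)$ is obtained there from \cite[Lemma 1.2]{trung-gcm} and the associativity formula rather than from superficiality. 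So your proof is incomplete exactly at its load-bearing step; to repair it you should replace your claimed colon identity with a bound of Cuong--Long--Truong type (or reprove that estimate), since the statement you ``grant'' is false as stated.
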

\begin{proof}
We apply induction on $r.$ Let $r=1.$ Set $W:=\h^0_\m(M)$ and $M^\prime:=M/\h_{\m}^0(M).$ Then $M^\prime$ is a Cohen-Macaulay $R$-module and $e_0(Q,M^\prime)=e_0(Q,M)$. Hence for all $n\geq 0,$ 
 \begin{eqnarray*}
  \ell_R(M/Q^{n+1}M)&=&\ell_R(M^\prime/Q^{n+1}M^\prime)+\ell_R(W/(W\cap Q^{n+1}M))\\
  &=&e_0(Q,M)(n+1)+\ell_R(W/(W\cap Q^{n+1}M)).
  \end{eqnarray*}
 Therefore for all $n \geq 0,$
 \begin{eqnarray*}
 0\leq \ell_R(M/Q^{n+1}M) -  e_0(Q,M)(n+1)  \leq & \ell_R(W)=I(M).
 \end{eqnarray*}
Thus the result is true for $r=1. $ Now let $r>1$ and $Q=(x_1,\ldots,x_r)$ be a parameter ideal for $M.$ We put $\overline{M}=M/x_1M.$ 
Since $M$ is a generalized Cohen-Macaulay module, by  \cite[Claim 1 in p. 351]{cuong-long-truong2015}, we have 
\begin{eqnarray}\label{eqn:bound_on_colon}
0 \leq \ell_R((Q^{n+1}M:x_1)/Q^nM)\leq \binom{n+r-2}{r-2}I(M) \mbox{ for all }n\geq 0. 
\end{eqnarray}
By \cite[Lemma 1.2]{trung-gcm}, $\dim R/\p=r-i$ for all $\p\in \ass (M/(x_1,\ldots,x_i)M)\setminus \{\m\}$ and $i=1,\ldots,r-1.$ Thus, using \cite[Corollary 4.8]{ab}, we get  
$$e_0(Q,\ov{M})=\ell_R(M/QM)-\ell_R((x_1,\ldots,x_{r-1})M:x_r/x_{r-1}M)=e_0(Q,M).$$
By Lemma \ref{lemma-for-uniform-bounds-trung}, $ I(\overline{M})\leq I(M).$
Hence applying induction hypothesis, we get 
\begin{equation}\label{eqn:bound_by_induction}-(r-1)\binom{n+r-2}{r-2}I(M)\leq \ell_R(\overline{M}/Q^{n+1}\overline{M}) - e_0(Q,M)\binom{n+r-1}{r-1} 
\leq \binom{n+r-2}{r-2}I(M)
 \end{equation}
for all $n \geq 0.$ Considering the exact sequence 
$$0\to (Q^{t+1}M:x_1)/Q^tM\to M/Q^tM\xrightarrow{~x_1} M/Q^{t+1}M\to \overline{M}/Q^{t+1}\overline{M}\to 0, $$
we get \begin{eqnarray} 
\label{eqn-exact sequence}
        \ell_R(Q^tM/Q^{t+1}M)&=&\ell_R(\overline{M}/Q^{t+1}\overline{M})-\ell_R((Q^{t+1}M:x_1)/Q^tM) \hspace{0.2cm}\mbox{ for all } t \geq 0.
                \end{eqnarray}
Hence, using \eqref{eqn:bound_on_colon} and \eqref{eqn:bound_by_induction}, we get 
\begin{equation}\label{eqn:exact_seq}
-r \binom{t+r-2}{r-2}I(M)\leq \ell_R(Q^tM/Q^{t+1}M) -  e_0(Q,M)\binom{t+r-1}{r-1} \leq \binom{t+r-2}{r-2}I(M). 
\end{equation}
Since $\ell_R(M/Q^{n+1}M)=\mathop\sum\limits_{t=0}^n\ell_R(Q^tM/Q^{t+1}M),$
using \eqref{eqn:exact_seq}, we get 
$$-r\binom{n+r-1}{r-1}I(M)\leq \ell_R(M/Q^{n+1}M)-e_0(Q,M)\binom{n+r}{r}\leq \binom{n+r-1}{r-1}I(M).$$
\end{proof}


Let $G_Q(M)=\mathop\bigoplus\limits_{n \geq 0} Q^nM/Q^{n+1}M$ be the associated graded module of $M$ with respect to $Q.$ Let $\mathcal M=\mathop\bigoplus\limits_{n \geq 1} [G_Q(R)]_n$ and  
$$a_i(G_Q(M))= \sup\{n \in \ints : [\h^i_{\mathcal M}(G_Q(M))]_n \neq 0\}.$$  
Recall that 
$$ \reg(G_Q(M))=\sup \{a_i(G_Q(M))+i:i \in \ints\} $$ 
is the {\it Castelnuovo-Mumford regularity} of the graded module $G_Q(M).$ We need the following lemma 
in order to obtain uniform bounds on the coefficients $e_i(Q,M)$ in terms of $\reg(G_Q(M)).$ 
We skip the proof of this as it is similar to \cite[Lemma 2.3]{goto-ozeki}.

\begin{lemma}\label{lemma-for-uniform-bounds-1}
 Let $M$ be a finitely generated module of dimension $r>0$ and $Q$ a parameter ideal for $M.$  
 \begin{enumerate}
  \item \label{lemma-for-uniform-bounds-1-a} Let $M^\prime=M/\h_{\m}^0(M)$. Then $\reg(G_Q(M))\geq \reg(G_Q(M^\prime))$.
  \item \label{lemma-for-uniform-bounds-1-b} Assume that $r\geq 2$ and $x\in Q$ is superficial for $M$ with respect to $Q.$ Let $\ov{Q}=Q/(x)$ in $\ov{R}=R/(x)$
  and $\ov{M}=M/xM$. Then $\reg(G_Q(M))\geq \reg(G_{\ov{Q}}(\ov{M}))$.
   \end{enumerate}
\end{lemma}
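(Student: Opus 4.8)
The plan is to derive both inequalities from short exact sequences of graded $G_Q(R)$-modules whose ``error term'' has finite length, using the following elementary observation, which is the engine behind \cite[Lemma 2.3]{goto-ozeki}: if $0\to A\to B\to D\to 0$ is an exact sequence of finitely generated graded $G_Q(R)$-modules with $\ell_R(A)<\infty$, then $A$ is $\mathcal{M}$-torsion, so $\h^0_{\mathcal{M}}(A)=A$ and $\h^i_{\mathcal{M}}(A)=0$ for $i\geq 1$; the long exact cohomology sequence then gives $\h^i_{\mathcal{M}}(B)\cong\h^i_{\mathcal{M}}(D)$ for $i\geq 1$ and a graded surjection $\h^0_{\mathcal{M}}(B)\twoheadrightarrow\h^0_{\mathcal{M}}(D)$, whence $a_i(B)\geq a_i(D)$ for every $i$ and therefore $\reg B\geq\reg D$. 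So in each part it suffices to write down the relevant sequence and to verify that its kernel (or cokernel) has finite length.

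For \eqref{lemma-for-uniform-bounds-1-a}, I would put $W=\h^0_\m(M)$, so $M^\prime=M/W$, and note that the maps $M\to M^\prime$ induce a surjection $G_Q(M)\to G_Q(M^\prime)$ sitting in an exact sequence of graded $G_Q(R)$-modules $0\to N\to G_Q(M)\to G_Q(M^\prime)\to 0$ with $N_n=(Q^nM\cap W)/(Q^{n+1}M\cap W)$; the identification of the kernel comes from the modular law applied to $Q^{n+1}M\subseteq Q^nM$ and the submodule $W$, and $N$ is a $G_Q(R)$-submodule because $q(Q^nM\cap W)\subseteq Q^{n+1}M\cap W$ for $q\in Q$. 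Since $\ell_R(N)\leq\ell_R(W)<\infty$, the observation above yields $\reg G_Q(M)\geq\reg G_Q(M^\prime)$.

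For \eqref{lemma-for-uniform-bounds-1-b}, after the standard reduction (base change to $R[t]_{\m R[t]}$, which leaves all invariants in sight unchanged) I may assume $x$ is a minimal generator of $Q$, so that its initial form $x^\ast\in[G_Q(R)]_1$ is an honest linear form, and then compare $G_Q(M)$ with $G_{\ov Q}(\ov M)$ in two steps. \emph{Step 1:} there is a natural surjection $G_Q(M)/x^\ast G_Q(M)\twoheadrightarrow G_{\ov Q}(\ov M)$, in degree $n$ the quotient map $Q^nM/(xQ^{n-1}M+Q^{n+1}M)\to Q^nM/((xM\cap Q^nM)+Q^{n+1}M)$, whose kernel vanishes in every degree $n$ with $xM\cap Q^nM=xQ^{n-1}M$; superficiality of $x$ gives $xM\cap Q^nM=xQ^{n-1}M$ for $n\gg 0$, so this kernel has finite length and the observation gives $\reg\bigl(G_Q(M)/x^\ast G_Q(M)\bigr)\geq\reg G_{\ov Q}(\ov M)$. \emph{Step 2:} I must show $\reg G_Q(M)\geq\reg\bigl(G_Q(M)/x^\ast G_Q(M)\bigr)$. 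Writing $B=G_Q(M)$ and $K=(0:_Bx^\ast)$, superficiality again forces $K_n=\bigl((Q^{n+2}M:_Mx)\cap Q^nM\bigr)/Q^{n+1}M=0$ for $n\gg 0$, so $\ell_R(K)<\infty$; the exact sequence $0\to K(-1)\to B(-1)\xrightarrow{\,x^\ast\,}x^\ast B\to 0$ gives $\h^j_{\mathcal{M}}(x^\ast B)\cong\h^j_{\mathcal{M}}(B)(-1)$ for $j\geq 1$, and feeding this into the long exact sequence of $0\to x^\ast B\to B\to B/x^\ast B\to 0$ shows that $[\h^i_{\mathcal{M}}(B/x^\ast B)]_n\neq 0$ forces $[\h^i_{\mathcal{M}}(B)]_n\neq 0$ or $[\h^{i+1}_{\mathcal{M}}(B)]_{n-1}\neq 0$, and in either case $\reg B\geq n+i$. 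Combining Steps 1 and 2 gives \eqref{lemma-for-uniform-bounds-1-b}.

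The real content sits in the three finite-length claims (for $N$, for the Step-1 kernel, and for $K=(0:_{G_Q(M)}x^\ast)$), each of which is an Artin--Rees/superficiality computation, and in the somewhat fussy tracking of graded components of local cohomology through the Koszul-type sequence for $x^\ast$; I expect Step 2 to be the most delicate point to write cleanly, and the degenerate case $x\in Q^2$ is what forces the reduction to a minimal generator. None of this is deep — which is precisely why it parallels \cite[Lemma 2.3]{goto-ozeki} closely enough to be omitted.
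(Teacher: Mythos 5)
Your argument is correct and is essentially the route the paper intends: the proof is omitted there precisely because it runs along the finite-length-kernel plus local-cohomology comparison of \cite[Lemma 2.3]{goto-ozeki}, which is exactly what you reconstruct (the exact sequences with kernels $N$, the Step-1 kernel, and $(0:_{G_Q(M)}x^\ast)$, all of finite length by Artin--Rees/superficiality). One cosmetic quibble: the case $x\in Q^2$ is excluded automatically, since a superficial element for a module of positive dimension cannot lie in $Q^2$ (otherwise $Q^cM=Q^{c+1}M$ for some $c$, forcing $Q^cM=0$), not by the base change to $R[t]_{\m R[t]}$, which does not affect membership in $Q^2$.
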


We use a method similar to \cite[Theorem 2.2]{goto-ozeki} to prove the following theorem.

\begin{theorem}\label{thm-uniform-bounds-for-modules}
 Let $M$ be a generalized Cohen-Macaulay module of dimension $r>0$ and $Q$ a parameter ideal for $M.$ Put $\kappa=\reg(G_Q(M)).$ Then
 \begin{enumerate}
  \item \label{uniform-bounds-a} $|e_1(Q,M)|\leq I(M).$
  \item \label{uniform-bounds-b} $|e_i(Q,M)|\leq (r+1) \cdot 2^{i-2}  (\kappa+1)^{i-1}I(M)$ for $2\leq i\leq r$.
 \end{enumerate}
\end{theorem}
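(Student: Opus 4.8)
The plan is to reduce everything to a careful comparison of Hilbert--Samuel functions using the uniform bounds established in Lemma~\ref{lemma-for-uniform-bounds-3}. The statement (a) is the base case $i=1$, and for it I would write the Hilbert--Samuel polynomial of $Q$ on $M$ against the length function: from Lemma~\ref{lemma-for-uniform-bounds-3}, for all $n\geq 0$ we have
\[
\left|\ell_R(M/Q^{n+1}M)-e_0(Q,M)\binom{n+r}{r}\right|\leq r\binom{n+r-1}{r-1}I(M).
\]
On the other hand, $P(Q,n,M)=e_0(Q,M)\binom{n+r-1}{r}-e_1(Q,M)\binom{n+r-2}{r-1}+\cdots$, so expanding and using that $\ell_R(M/Q^{n+1}M)$ agrees with $P(Q,n+1,M)$ for $n\gg 0$, the coefficient of $\binom{n+r-1}{r-1}$ on the polynomial side is $-e_1(Q,M)+\big(\text{something involving }e_0\big)$. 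Matching leading correction terms and using that the inequality holds for infinitely many $n$ forces $|e_1(Q,M)|\leq I(M)$. This is essentially the computation already implicit after \eqref{eqn:exact_seq}.

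For part (b) I would induct on $r$, mimicking the structure of \cite[Theorem 2.2]{goto-ozeki}. The reduction step is provided by Lemma~\ref{lemma-for-uniform-bounds-1}: picking $x\in Q$ superficial for $M$ with respect to $Q$, one passes to $\ov M=M/xM$, which is again generalized Cohen--Macaulay with $I(\ov M)\leq I(M)$ (Lemma~\ref{lemma-for-uniform-bounds-trung}) and $\reg(G_{\ov Q}(\ov M))\leq \kappa$ (Lemma~\ref{lemma-for-uniform-bounds-1}\eqref{lemma-for-uniform-bounds-1-b}). The key point is the relation between the Hilbert coefficients of $Q$ on $M$ and those of $\ov Q$ on $\ov M$: one has $e_i(Q,M)=e_i(\ov Q,\ov M)$ for $0\leq i\leq r-2$ up to an error controlled by $\ell_R\big((Q^{n+1}M:x)/Q^nM\big)$, and this error is bounded, via \eqref{eqn:bound_on_colon} and the vanishing $[\h^0_{\mathcal M}(G_Q(M))]_n=0$ for $n>\kappa$, by a polynomial in $n$ of degree $r-2$ whose coefficients are multiples of $(\kappa+1)I(M)$. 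Feeding these into the inductive bound $|e_i(\ov Q,\ov M)|\leq r\cdot 2^{i-2}(\kappa+1)^{i-1}I(M)$ for the $(r-1)$-dimensional module $\ov M$, and absorbing the correction (which contributes an extra factor of at most $2(\kappa+1)$), yields $|e_i(Q,M)|\leq (r+1)2^{i-2}(\kappa+1)^{i-1}I(M)$. The constant bookkeeping — showing the inductive constant $r\cdot 2^{i-2}(\kappa+1)^{i-1}$ for dimension $r-1$ plus the correction term stays under $(r+1)2^{i-2}(\kappa+1)^{i-1}$ — is the part requiring care, but it is exactly parallel to \cite{goto-ozeki}.

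**The main obstacle** I anticipate is not conceptual but lies in making the error term between $e_i(Q,M)$ and $e_i(\ov Q,\ov M)$ genuinely explicit and uniform: one must control $\ell_R\big((Q^{n+1}M:x)/Q^nM\big)$ as a function of $n$ sharply enough that both its degree ($\leq r-2$) and its coefficients (polynomial in $\kappa$ and linear in $I(M)$) are pinned down, and this is where $\reg(G_Q(M))$ genuinely enters — the regularity bounds the degree beyond which $\h^0$ of the associated graded module vanishes, hence the $n$ beyond which the colon stabilizes. Once that is in hand, the passage from the bound on the difference of Hilbert functions to the bound on individual coefficients $e_i$ is the same binomial-coefficient comparison used for $i=1$, applied degree by degree. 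I would therefore organize the proof as: (1) prove (a) directly from Lemma~\ref{lemma-for-uniform-bounds-3}; (2) set up the superficial-element reduction and the exact sequence $0\to(Q^{n+1}M:x)/Q^nM\to M/Q^nM\xrightarrow{x}M/Q^{n+1}M\to\ov M/Q^{n+1}\ov M\to 0$; (3) translate this into the relation among Hilbert coefficients with an explicit, $\kappa$-controlled error using \eqref{eqn:bound_on_colon} and regularity; (4) run the induction and track constants.
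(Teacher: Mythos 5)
Your plan for part (b) is essentially the paper's own argument (which in turn follows Goto--Ozeki): reduce to a superficial element $x_1$, use Lemma \ref{lemma-for-uniform-bounds-trung} and Lemma \ref{lemma-for-uniform-bounds-1} to pass the invariants $I(-)$ and the regularity to $\ov M=M/x_1M$, and use the regularity to truncate the sums $\sum_t\ell_R((Q^{t+1}M:x_1)/Q^tM)$ via \eqref{eqn:bound_on_colon}. One refinement you should make explicit: the paper first replaces $M$ by $M/\h^0_\m(M)$ (using \eqref{equation:ei-M-&-M'} and Lemma \ref{lemma-for-uniform-bounds-1}\eqref{lemma-for-uniform-bounds-1-a}), so that $x_1$ is a nonzerodivisor and $e_i(Q,M)=e_i(Q,\ov M)$ holds \emph{exactly} for $1\le i\le r-1$; the colon-error analysis is then needed only for the top coefficient $e_r(Q,M)$, which is where all the bookkeeping with $\kappa$ and Lemma \ref{lemma-for-uniform-bounds-3} is concentrated. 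Your formulation, in which every $e_i$ carries an error term controlled by the colon lengths, is workable in principle but makes the constant-tracking harder than necessary and is not what drives the stated constants.

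The genuine gap is in part (a). The bounds in Lemma \ref{lemma-for-uniform-bounds-3} are asymmetric: matching the coefficient of $\binom{n+r-1}{r-1}$, which is $-e_1(Q,M)$, against
\[
-r\binom{n+r-1}{r-1}I(M)\ \le\ \ell_R(M/Q^{n+1}M)-e_0(Q,M)\binom{n+r}{r}\ \le\ \binom{n+r-1}{r-1}I(M)
\]
only yields $-I(M)\le e_1(Q,M)\le r\,I(M)$, not $|e_1(Q,M)|\le I(M)$. So the "matching leading correction terms" step, as stated, fails to give the claimed constant. To close it you must either invoke the nonpositivity $e_1(Q,M)\le 0$ for parameter ideals (the result of Mandal--Singh--Verma cited in the paper as \cite[Theorem 3.6]{msv}, or the generalized Cohen--Macaulay bound from \cite{vanishing2}), or argue as the paper does: induct on $r$, noting that for $r=1$ one has $e_1(Q,M)=-\ell_R(\h^0_\m(M))=-I(M)$ by \cite[Proposition 3.1]{msv}, and for $r\ge2$ pass to $M/\h^0_\m(M)$ and then to $\ov M=M/x_1M$ with $x_1$ superficial, using $e_1(Q,M)=e_1(Q,\ov M)$ and $I(\ov M)\le I(M)$. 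This matters beyond (a) itself, since the induction for (b) uses the bound $|e_1(Q,\ov M)|\le I(\ov M)$ as an input.
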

\begin{proof}
We may assume that the residue field $R/\m$ is infinite.
 We use induction on $r.$ Let $r=1.$ Then by \cite[Proposition 3.1]{msv}, $e_1(Q,M)=-\ell_R(\h_{\m}^0(M))$ for all parameter ideals $Q$ for $M.$ 
Hence $|e_1(Q,M)|= \ell_R(\h_{\m}^0(M))=I(M).$ Thus the assertion is true in this case. 

Let $r\geq 2.$ We may assume that $\depth M>0.$ In fact, let $M^\prime=M/\h_{\m}^0(M)$ and assume that 
the assertion holds for $M^\prime.$ Set $\kappa^\prime=\reg(G_Q(M^\prime)).$ By \cite[Proposition 2.3]{RV}, 
\begin{eqnarray}\label{equation:ei-M-&-M'}
 e_i(Q,M)=\begin{cases}
           e_i(Q,M^\prime) & \mbox{ if }i \neq r\\
           e_r(Q,M^\prime)+(-1)^r \ell_R(\h^0_\m(M)) & \mbox{ if }i=r.
         \end{cases}
\end{eqnarray}
Hence 
\begin{eqnarray*}
 |e_1(Q,M)|&\leq &|e_1(Q,M^\prime)|+\ell_R(\h_{\m}^0(M)) \leq I(M^\prime) +  \ell_R(\h_{\m}^0(M)) = I(M)
\end{eqnarray*}
and for $2 \leq i \leq r,$
 \begin{eqnarray*}
  |e_i(Q,M)|&\leq &|e_i(Q,M^\prime)|+\ell_R(\h_{\m}^0(M))\\
  &\leq & (r+1)\cdot2^{i-2}(\kappa^\prime+1)^{i-1}I(M^\prime) +\ell_R(\h_{\m}^0(M))\\
  &\leq & (r+1)\cdot2^{i-2}(\kappa^\prime+1)^{i-1}\left(\sum_{i=1}^{r-1}\binom{r-1}{i}\ell_R(\h_{\m}^i(M^\prime))+\ell_R(\h_{\m}^0(M)) \right)\\
  &\leq& (r+1)\cdot2^{i-2}(\kappa+1)^{i-1}\left(\sum_{i=0}^{r-1}\binom{r-1}{i}\ell_R(\h_{\m}^i(M))\right) 
  \hspace{1cm}\text{[by Lemma } \ref{lemma-for-uniform-bounds-1}\eqref{lemma-for-uniform-bounds-1-a}]\\
 &=& (r+1)\cdot2^{i-2}(\kappa+1)^{i-1}I(M).
 \end{eqnarray*}

 Let $Q=(x_1,\ldots,x_r)$ be such that $x_1$ is superficial for $M$ with respect to $Q$. Put $\overline{M}=M/x_1M$ and $\bar{\kappa}=\reg(G_{Q}(\ov{M}).$ Then, using 
 induction hypothesis and Lemmas \ref{lemma-for-uniform-bounds-trung} and \ref{lemma-for-uniform-bounds-1}\eqref{lemma-for-uniform-bounds-1-b}, we get  
 \begin{eqnarray}\label{equation:bound-for-e1}
  |e_1(Q,M)|=|e_1(Q,\ov{M})|&\leq& I(\ov{M})\leq I(M) 
 \end{eqnarray}
and for $2 \leq i \leq r-1,$ 
\begin{eqnarray}\label{equation:bound-for-induction-step}
 |e_i(Q,M)|=|e_i(Q,\ov{M})|&\leq& r\cdot2^{i-2}(\bar{\kappa}+1)^{i-1}I(\ov{M})\leq (r+1)\cdot2^{i-2}(\kappa+1)^{i-1}I(M).
\end{eqnarray}
Let $i=r.$ By \cite[Theorem 4.4.3]{bruns-herzog}, for all $t>\kappa$
$$\ell_R(Q^t M/Q^{t+1}M)=\sum_{i=0}^{r-1}(-1)^ie_i(Q,M)\binom{t+r-1-i}{r-1-i}$$
and for all $t\geq \bar{\kappa}$,
$$\ell_R(\ov{M}/Q^{t+1}\ov{M})=\sum_{i=0}^{r-1}(-1)^ie_i(Q,\ov{M})\binom{t+r-1-i}{r-1-i}.$$
Since $\kappa\geq \bar{\kappa}$ by Lemma \ref{lemma-for-uniform-bounds-1}\eqref{lemma-for-uniform-bounds-1-b}, we get $\ell_R(Q^tM/Q^{t+1}M)=\ell_R(\ov{M}/Q^{t+1}\ov{M})$ for all $t>\kappa$. 
Hence, using \eqref{eqn-exact sequence}, we get $(Q^{t+1}M:x_1)=Q^tM$ for all $t>\kappa$. 
Since $\ell_R(M/Q^{n+1}M)=\mathop\sum\limits_{i=0}^{r}(-1)^ie_i(Q,M)\binom{n+r-i}{r-i}$ for all $n\geq \kappa$ by \cite[Theorem 4.4.3]{bruns-herzog}, we get 
\begin{eqnarray} 
 &&(-1)^re_r(Q,M) \nonumber\\
 &=& \ell_R(M/Q^{n+1}M)-\sum_{i=0}^{r-1}(-1)^ie_i(Q,M)\binom{n+r-i}{r-i}\nonumber\\
 &=&\sum_{t=0}^n \ell_R({Q^tM}/Q^{t+1}{M}) - \sum_{i=0}^{r-1}(-1)^ie_i(Q,M)\binom{n+r-i}{r-i}\nonumber\\
 &=& \sum_{t=0}^n\ell_R(\ov{M}/Q^{t+1}\ov{M})-\sum_{t=0}^n\ell_R((Q^{t+1}M:x_1)/Q^tM)-\sum_{t=0}^{n}\sum_{i=0}^{r-1}(-1)^ie_i(Q,\ov{M})\binom{t+r-1-i}{r-1-i}\nonumber\\
 &&\hspace{10cm} [\text{from } \eqref{eqn-exact sequence} ]\nonumber\\
&=&\sum_{t=0}^n\left(\ell_R(\ov{M}/Q^{t+1}\ov{M})-\sum_{i=0}^{r-1}(-1)^ie_i(Q,\ov{M})\binom{t+r-1-i}{r-1-i}\right)-\sum_{t=0}^{\kappa}\ell_R((Q^{t+1}M:x_1)/Q^tM)\nonumber\\
&=&\sum_{t=0}^{\bar{\kappa}}\left(\ell_R(\ov{M}/Q^{t+1}\ov{M})-\sum_{i=0}^{r-1}(-1)^ie_i(Q,\ov{M})\binom{t+r-1-i}{r-1-i}\right)-\sum_{t=0}^\kappa\ell_R((Q^{t+1}M:x_1)/Q^tM)\nonumber\\
&=&\sum_{t=0}^{\bar{\kappa}}\left(\ell_R(\ov{M}/Q^{t+1}\ov{M})- e_0(Q,\ov{M})\binom{t+r-1}{r-1}\right)-\sum_{t=0}^{\bar{\kappa}} \sum_{i=1}^{r-1}(-1)^ie_i(Q,\ov{M})\binom{t+r-1-i}{r-1-i}\nonumber\\
 & & -\sum_{t=0}^{\kappa}\ell_R((Q^{t+1}M:x_1)/Q^tM).\label{eqn-e*2} \nonumber
 \end{eqnarray}
This implies that
\begin{eqnarray*}
&& |e_r(Q,M)|\\
&\leq& \sum_{t=0}^{\bar{\kappa}}\left|\ell_R(\ov{M}/Q^{t+1}\ov{M})-e_0(Q,\ov{M})\binom{t+r-1}{r-1}\right|+ \sum_{t=0}^{\bar{\kappa}} |e_1(Q,M)|\binom{t+r-2}{r-2}\\
& & +\sum_{t=0}^{\bar{\kappa}} \sum_{i=2}^{r-1}|e_i(Q,M)|\binom{t+r-1-i}{r-1-i} +\sum_{t=0}^{\kappa}\ell_R((Q^{t+1}M:x_1)/Q^tM)\\
&\leq& \sum_{t=0}^{\bar{\kappa}}(r-1)\binom{t+r-2}{r-2}I(\ov{M})+\sum_{t=0}^{\bar{\kappa}} \binom{t+r-2}{r-2}I(M)\\
& &+\sum_{t=0}^{\bar{\kappa}} \sum_{i=2}^{r-1}(r+1)\cdot2^{i-2}(\kappa+1)^{i-1}\binom{t+r-1-i}{r-1-i}I(M) + \sum_{t=0}^{\kappa}\binom{t+r-2}{r-2}I(M)\\
& & \hspace{6cm} [\text{using Lemma } \ref{lemma-for-uniform-bounds-3} \mbox{ and Equations }\eqref{eqn:bound_on_colon},\eqref{equation:bound-for-e1} \mbox{ and }\eqref{equation:bound-for-induction-step}]\\ 
&=& (r-1)\binom{\bar{\kappa}+r-1}{r-1}I(\ov{M})+\binom{\bar{\kappa}+r-1}{r-1}I(M)\\
& & +\sum_{i=2}^{r-1}(r+1) \cdot 2^{i-2}(\kappa+1)^{i-1}\binom{\bar{\kappa}+r-i}{r-i}I(M)+\binom{\kappa+r-1}{r-1}I(M)\\
&\leq & (r+1) \binom{\kappa+r-1}{r-1}I(M)+\sum_{i=2}^{r-1}(r+1)\cdot2^{i-2}(\kappa+1)^{i-1}\binom{\kappa+r-i}{r-i}I(M)\\
& & \hspace{6cm } [ \text{using Lemmas } \ref{lemma-for-uniform-bounds-trung} \mbox{ and }\ref{lemma-for-uniform-bounds-1} \eqref{lemma-for-uniform-bounds-1-b}]\\
&\leq & (r+1)(\kappa+1)^{r-1}I(M)+ \sum_{i=2}^{r-1}(r+1)\cdot2^{i-2}(\kappa+1)^{r-1}I(M)\\
& & \hspace{6cm} [ \text{since } \binom{m+n}{n}\leq (m+1)^n \text{ for all integers } n\geq 0]\\
&=& (r+1)(\kappa+1)^{r-1}I(M) \left(1+\sum_{i=2}^{r-1}2^{i-2}\right)\\
&=& (r+1)\cdot2^{r-2}(\kappa+1)^{r-1}I(M) \hspace{1.5cm} [\mbox{since }\sum_{i=2}^{r-1}2^{i-2}=2^{r-2}-1].
\end{eqnarray*}
\end{proof}

In the following lemma we give a necessary condition for the finiteness of the set $\Lambda_i(M)$ for all $1 \leq i \leq k,$ where $k$ is a fixed integer such that $1 \leq k \leq r.$ The proof given here is motivated by \cite[Theorem 1.1]{goto-ozeki}.

\begin{lemma} \label{lemma:finOfFewSets}
 Let $(R,\m)$ be a Noetherian local ring, $K$ an $\m$-primary ideal of $R$ and $M$ a finitely generated $R$-module of dimension $r\geq 2.$ For a fixed $1 \leq k \leq r,$ assume that 
 $$\{e_i(Q,M):Q\mbox{ is a parameter ideal for }M \mbox{ and } Q \subseteq K \}$$ 
 is a finite set for all $1 \leq i \leq k.$ Then $\ell_R(\h_\m^{r-i}(M))<\infty$ for all $1 \leq i \leq k.$ 
 
In particular, if $\Lambda_i(M)$ is finite for all $1\leq i\leq k,$ then $\ell_R(\h_\m^{r-i}(M))<\infty$ for all $1 \leq i \leq k.$
\end{lemma}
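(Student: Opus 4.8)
The plan is to induct on $k$, reducing the dimension of $M$ by one superficial element at each stage and extracting from the finiteness hypothesis a length bound on the relevant local cohomology module. First I would reduce to the case $\depth M > 0$: replacing $M$ by $M' = M/\h^0_\m(M)$ changes only $e_r(Q,M)$ (by the formula \eqref{equation:ei-M-&-M'}), and changes $\h^{r-i}_\m(M)$ only for $r-i = 0$, i.e. $i = r$; so if $k < r$ the reduction is harmless, and if $k = r$ we separately note that $\h^0_\m(M)$ has finite length automatically. Thus assume $\depth M > 0$, pick $x \in K$ superficial for $M$ with respect to a parameter ideal $Q \subseteq K$ (possible since $\m^l \subseteq K$ for some $l$, using powers of a system of parameters, and since we may assume $R/\m$ infinite), and set $\ov M = M/xM$, a module of dimension $r-1$.

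The key step is to relate the Hilbert coefficients of $M$ to those of $\ov M$. For a superficial element one has $e_i(Q,\ov M) = e_i(Q,M)$ for $0 \le i \le r-2$, while $e_{r-1}(Q,M)$ and $e_{r-1}(Q,\ov M)$ differ by a term controlled by $\ell_R(\h^0_\m(\ov M))$ (the "sally machine"/superficial-element comparison, as in the computations inside Theorem~\ref{thm-uniform-bounds-for-modules}). Now observe: since $x \in K$, any parameter ideal $\ov Q$ of $\ov M$ can be lifted (after taking a further power, which stays inside $K$) to a parameter ideal $Q$ of $M$ contained in $K$ with $\ov Q$ its image; so the finiteness of $\{e_i(Q,M) : Q \subseteq K\}$ for $1 \le i \le k$ forces the finiteness of $\{e_i(\ov Q, \ov M) : \ov Q \subseteq K\ov R\}$ for $1 \le i \le k-1$. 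By the induction hypothesis applied to $\ov M$ (dimension $r-1$, with the integer $k-1$), we conclude $\ell_R(\h^{(r-1)-i}_\m(\ov M)) < \infty$ for $1 \le i \le k-1$, i.e. $\ell_R(\h^{r-1-j}_\m(\ov M)) < \infty$ for $j$ in the range $0 \le j \le k-2$; rewriting indices, $\h^j_\m(\ov M)$ has finite length for $r-k+1 \le j \le r-2$, which via the long exact sequence of $0 \to M \xrightarrow{x} M \to \ov M \to 0$ already gives finite length of $\h^j_\m(M)$ for $r-k+1 \le j \le r-1$, i.e. of $\h^{r-i}_\m(M)$ for $1 \le i \le k-1$.

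It remains to handle the top index $i = k$, i.e. to show $\ell_R(\h^{r-k}_\m(M)) < \infty$; this is the main obstacle, since the induction on $\ov M$ does not by itself deliver it. Here I would use the finiteness of $\{e_k(Q,M) : Q \subseteq K\}$ directly, together with the comparison $e_{r-1}(\ov Q, \ov M)$ versus $e_{r-1}(Q,M)$ when $k = r$, or more generally the equality $e_k(Q,M) = e_k(\ov Q, \ov M)$ when $k \le r-2$ and the (already established) fact that the lower coefficients $e_1, \dots, e_{k-1}$ of $\ov M$ are each bounded, running the argument one dimension further down — that is, apply the whole statement inductively to $\ov M$ with the integer $k$ rather than $k-1$ once we know $\dim \ov M = r-1 \ge k$. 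The cleanest formulation is therefore a double induction / strong induction on $r$: the base case $r = 2$ (where $k \in \{1,2\}$) is handled by the $\dim = 1$ formula $e_1(Q, -) = -\ell_R(\h^0_\m(-))$ after one superficial reduction, and in the inductive step one applies the statement to $\ov M$ with the same $k$ (when $k \le r-1$) to get all of $\h^{r-1-i}_\m(\ov M)$, $1 \le i \le k$, of finite length, then lifts through the long exact sequence. The delicate point to get right is exactly the index bookkeeping in the top degree and the verification that lifting parameter ideals of $\ov M$ to parameter ideals of $M$ inside $K$ costs only a harmless power; once those are pinned down, the "in particular" clause for $\Lambda_i(M)$ follows by taking $K = R$ (or any $\m$-primary ideal, since every parameter ideal lies in some power of $\m$ and hence the unrestricted finiteness implies the $K$-restricted finiteness).
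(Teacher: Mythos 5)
There is a genuine gap, and it sits at both load-bearing points of your induction. First, the transfer of the finiteness hypothesis to $\ov M = M/xM$ is not justified: you fix one element $x\in K$ and then want $e_i(Q,M)=e_i(\ov Q,\ov M)$ for \emph{every} lifted parameter ideal $Q=(x,y_1,\dots,y_{r-1})\subseteq K$, but a fixed $x$ is superficial only for those $Q$ avoiding the relevant primes of the associated graded module of that particular $Q$; it cannot be chosen superficial for all lifts simultaneously, and if you instead choose $x$ depending on $\ov Q$ the module $\ov M$ changes and the induction has no fixed target. Second, and more fundamentally, the step ``finite length of $\h^j_\m(\ov M)$ plus the long exact sequence of $0\to M\xrightarrow{x}M\to\ov M\to 0$ gives finite length of $\h^j_\m(M)$'' is false: that sequence only shows $\h^j_\m(M)/x\h^j_\m(M)$ and $(0:_{\h^{j+1}_\m(M)}x)$ have finite length, and for Artinian modules this does not force finite length (Matlis duality turns this into the statement that $N/xN$ of finite length forces $N$ of finite length, which fails already in dimension one). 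This is exactly the difficulty the lemma is designed to navigate, since $M$ is not assumed generalized Cohen--Macaulay; your admitted vagueness at the top index $i=k$ is a symptom of the same problem, so the proposal does not close.

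For comparison, the paper avoids superficial reduction altogether. After completing, it splits off the unmixed component $U=U_M(0)$ and sets $N=M/U$: when $U=0$, finiteness of the $e_1$-values on parameter ideals inside $\m^l\subseteq K$ that form $d$-sequences forces $M$ to be generalized Cohen--Macaulay by \cite[Lemma 4.1]{vanishing2}; when $U\neq 0$, Lemma \ref{lemma-0} gives $\dim U\leq r-2$, the $U=0$ case applied to $N$ (using $e_1(Q,M)=e_1(Q,N)$) shows $N$ is generalized Cohen--Macaulay, and then the key computation shows that if $t=\dim U\geq r-k$ one can build parameter ideals $Q=(x_1^n,\dots,x_r^n)\subseteq K$ with $(x_{t+1},\dots,x_r)U=0$ and $\m^{l_0}$ standard for $N$, for which $(-1)^{r-t}e_{r-t}(Q,M)\geq n^t$ grows without bound, contradicting the hypothesis. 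Hence $\dim U\leq r-(k+1)$, so $\h^i_\m(U)=0$ for $i\geq r-k$ and $\h^i_\m(M)\cong\h^i_\m(N)$ has finite length in that range. If you want to salvage your outline you would need a substitute for both of the failing steps, and in effect you would be led back to some version of this unmixed-component argument.
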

\begin{proof}
We may assume that $R$ is complete. Let $l$ be an integer such that $\m^l\subseteq K$. Let $U=U_M(0)$ and $N=M/U.$ If $U=0$ then $M$ is unmixed and 
the set $\{e_1(Q,M):Q=(x_1,\ldots,x_r)\subseteq \m^l \mbox{ and } x_1,\ldots,x_r \mbox{ is a $d$-sequence for } M\}$ is finite. 
Hence by \cite[Lemma 4.1]{vanishing2}, $M$ is a generalized Cohen-Macaulay module. Thus $\ell_R(\h_\m^{r-i}(R))<\infty$ for all $1 \leq i \leq r.$

Assume that $U \neq 0.$ 
By Lemma \ref{lemma-0}, $\dim U\leq r-2.$ Hence by \cite[Lemma 3.3]{vanishing2}, $e_1(Q,M)=e_1(Q,N).$ Thus the 
set $\{e_1(Q,N):Q\mbox{ is a parameter ideal for }M \mbox{ and } Q \subseteq K \}$ is finite. By \cite[Remark 4.4]{vanishing2}, the set $\{e_1(Q,N):Q\mbox{ is a parameter ideal for }N \mbox{ and } Q \subseteq K \}$ is also finite. Hence, by $U=0$ case, $N$ is generalized Cohen-Macaulay.
We now show that $t:=\dim U \leq r-(k+1).$  
We may assume that $t\geq 1$. Let $x_1,\ldots,x_r$ be a system of parameters for $M$ such that $(x_{t+1},\ldots,x_r)U=0.$ Since $N$ is a generalized Cohen-Macaulay module, by \cite[Lemma 1.5]{trung-gcm},
there exists an integer $l_1\geq 1$ such that $\m^{l_1}$ is a standard ideal for $N.$ 
Let $l_0=\max\{l_1,l\}.$ Then $\m^{l_0} \subseteq K$ is a standard ideal. Let $n\geq l_0$ and $Q=(x_1^n,\ldots,x_r^n).$ Then by \cite[Corollary 4.2]{trung-gcm},  
$$e_{r-t}(Q,N)=(-1)^{r-t}\mathop\sum\limits_{j=1}^t\binom{t-1}{j-1}\ell_R(\h_{\m}^j(N)).$$ 
We have 
 $$\ell_R(M/Q^{n+1}M)=\ell_R(N/Q^{n+1}N)+\ell_R(U/(Q^{n+1}M\cap U)) \mbox{ for all }n\geq 0.$$ 
Since the filtration $\{Q^{n+1}M\cap U\}$ is a good $Q$-filtration of $U$ (see \cite[page 1]{RV} for the definition of good $Q$-filtration), 
$$\ell_R(U/(Q^{n+1}M\cap U))=\mathop\sum\limits_{i=0}^t(-1)^is_i(Q,U)\binom{n+t-i}{t-i}$$
for some integers $s_i(Q,U)$ with $s_0(Q,U)=e_0(Q,U).$ This implies that for $n\gg 0$,
\begin{eqnarray*}
\ell_R(M/Q^{n+1}M)
&=& \mathop\sum\limits_{i=0}^r(-1)^ie_i(Q,N)\binom{n+r-i}{r-i}+\mathop\sum\limits_{i=0}^t(-1)^is_i(Q,U)\binom{n+t-i}{t-i}.\label{eqn-M-N-U-relation}
\end{eqnarray*}
Therefore for $n\geq l_0,$
\begin{eqnarray*}
 (-1)^{r-t}e_{r-t}(Q,M)&=&(-1)^{r-t}e_{r-t}(Q,N)+e_0(Q,U)\\
 &=& \mathop\sum\limits_{j=1}^t\binom{t-1}{j-1}\ell_R(\h_{\m}^j(N))+n^te_0((x_1,\ldots,x_t),U)\\
& \geq & n^t.
\end{eqnarray*}
Thus $\Lambda_{r-t}(M)$ is not finite which implies that $r-t\geq k+1$. Thus $ t\leq r-(k+1).$ Consequently, $\h_{\m}^i(U)=0$ for all $i \geq r-k.$ Hence $\h_{\m}^i(M) \simeq \h_{\m}^i(N)$ has finite length for all
$r-k \leq i\leq r-1.$
\end{proof}

Next, we improve a result of Goto and Ozeki \cite[Theorem 1.1]{goto-ozeki} and generalize it for modules. In order to prove this we recall the following result from \cite{cuong-long-truong2015}.

\begin{theorem} \rm\cite[Corollary 4]{cuong-long-truong2015} \label{thm:bound on regularity}
 Let $M$ be a generalized Cohen-Macaulay module. Then, there exists a constant $C$ such that $\reg(G_Q(M))\leq C$ for all parameter ideals $Q$ for $M.$
\end{theorem}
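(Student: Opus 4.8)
\emph{Proof plan.} The plan is to prove, by induction on $r=\dim M$, the sharper statement that $\reg(G_Q(M))$ is bounded above by an explicit function of $r$ and of $I(M)=\sum_{i=0}^{r-1}\binom{r-1}{i}\ell_R(\h^i_\m(M))$ (see \eqref{eqn:I(M)}); since this bound does not involve $Q$, the assertion follows. We may assume $R/\m$ is infinite, replacing $R$ by $R[X]_{\m R[X]}$, which alters neither $\reg(G_Q(M))$ nor $I(M)$. The cases $r\le 1$ are handled directly: for $r=0$ one has $\reg(G_Q(M))=0$; for $r=1$, after passing to $M':=M/\h^0_\m(M)$ — which is Cohen--Macaulay, and for which $G_Q(M)\twoheadrightarrow G_Q(M')$ has finite-length kernel — one checks that $G_Q(M')\cong(M'/QM')[T]$ is free over the polynomial ring, so $\reg(G_Q(M'))=0$, while the kernel lives in degrees controlled by $\ell_R(\h^0_\m(M))$ precisely because $M$ is generalized Cohen--Macaulay (a uniform Artin--Rees estimate for $\h^0_\m(M)\hookrightarrow M$).

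Now let $r\ge 2$. As in the $r=1$ step one reduces to $\depth_R M>0$, and since $R/\m$ is infinite one may choose $x_1\in Q\setminus\m Q$ that is simultaneously a nonzerodivisor on $M$ and superficial for $M$ with respect to $Q$. Then $Q=(x_1,\dots,x_r)$ is generated by a system of parameters, and $\ov M:=M/x_1M$ is a generalized Cohen--Macaulay $\ov R$-module of dimension $r-1$ with $I(\ov M)\le I(M)$ by Lemma~\ref{lemma-for-uniform-bounds-trung}, so by the induction hypothesis $\reg\!\big(G_{\ov Q}(\ov M)\big)\le C_{r-1}$, a bound depending only on $r$ and $I(M)$. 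Writing $G=G_Q(M)$, $\ov G=G_{\ov Q}(\ov M)$ and $x_1^{*}\in[G_Q(R)]_1$ for the initial form of $x_1$, the graded module $E:=\bigoplus_{n\ge 0}(Q^{n+1}M:_Mx_1)/Q^nM$ is, up to a degree shift, the kernel of multiplication by $x_1^{*}$ on $G$, and it has finite length since $x_1$ is superficial; the defect of the Valabrega--Valla condition comparing $G/x_1^{*}G$ with $\ov G$ is likewise of finite length. Iterating the four-term exact sequence \eqref{eqn-exact sequence} and using the standard behaviour of Castelnuovo--Mumford regularity under multiplication by a one-form whose kernel and comparison defect have finite length, one obtains an inequality of the form
$$\reg(G_Q(M))\ \le\ \max\{\,\reg(G_{\ov Q}(\ov M)),\ \omega+c\,\},$$
where $\omega$ is the top nonvanishing degree of $E$ and $c$ a small explicit constant. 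Thus everything reduces to bounding $\omega$ uniformly in $Q$, i.e.\ to showing $(Q^{n+1}M:_Mx_1)=Q^nM$ once $n$ exceeds a bound depending only on $r$ and $I(M)$.

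I expect this last step to be the main obstacle. The key inputs are the per-degree estimate $\ell_R\big((Q^{n+1}M:_Mx_1)/Q^nM\big)\le\binom{n+r-2}{r-2}I(M)$ of \eqref{eqn:bound_on_colon}, and the identity $\ell_R\big((Q^{n+1}M:_Mx_1)/Q^nM\big)=\ell_R(\ov M/Q^{n+1}\ov M)-\ell_R(Q^nM/Q^{n+1}M)$ furnished by \eqref{eqn-exact sequence} (valid since $x_1$ is a nonzerodivisor on $M$). Because $\reg(G_{\ov Q}(\ov M))\le C_{r-1}$, the function $n\mapsto\ell_R(\ov M/Q^{n+1}\ov M)$ agrees, for $n\ge C_{r-1}$, with a polynomial whose coefficients are uniformly bounded — apply Theorem~\ref{thm-uniform-bounds-for-modules} to $\ov M$, together with $I(\ov M)\le I(M)$ and $e_0(Q,\ov M)=e_0(Q,M)$. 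Feeding this, Lemma~\ref{lemma-for-uniform-bounds-3} for $M$, and \eqref{eqn:bound_on_colon} into the displayed identity squeezes $\ell_R(Q^nM/Q^{n+1}M)$ between explicit polynomials and forces $E$ to vanish beyond a degree depending only on $r$ and $I(M)$, completing the induction. Making this effective bookkeeping precise — in particular pinning down the exact vanishing threshold from the Hilbert-function constraints relating $G$, $\ov G$ and $E$, and establishing the uniform Artin--Rees estimate used in the reduction to positive depth — is the real content; the inductive architecture above is then routine.
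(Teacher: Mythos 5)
You should note first that the paper offers no proof of Theorem \ref{thm:bound on regularity} at all: it is imported verbatim from \cite[Corollary 4]{cuong-long-truong2015}, so there is no in-paper argument to compare with, and your proposal has to stand on its own. As it stands it does not: the step you yourself flag as ``the main obstacle'' --- producing a bound, \emph{independent of $Q$}, on the largest $n$ with $(Q^{n+1}M:_Mx_1)\neq Q^nM$ (your $\omega$) --- is precisely the content of the cited theorem, and the squeeze you propose cannot deliver it. The inputs you list, namely \eqref{eqn:bound_on_colon}, Lemma \ref{lemma-for-uniform-bounds-3}, the identity coming from \eqref{eqn-exact sequence}, and uniform bounds on the Hilbert coefficients of $\ov M$, only control the \emph{lengths} $\ell_R\bigl((Q^{n+1}M:_Mx_1)/Q^nM\bigr)$; they say nothing about in which degrees these modules are nonzero. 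A finite-length graded module of total length one can live in arbitrarily high degree, so no contradiction arises from $E_n\neq 0$ at large $n$: the identity from \eqref{eqn-exact sequence} merely trades the unknown support of $E$ against the unknown postulation behaviour of $n\mapsto\ell_R(Q^nM/Q^{n+1}M)$, whose postulation number is essentially $\reg(G_Q(M))$ --- the very quantity you are trying to bound. Indeed, the only control of $\omega$ available from the paper's toolkit runs in the opposite direction: in the proof of Theorem \ref{thm-uniform-bounds-for-modules} one deduces $(Q^{t+1}M:_Mx_1)=Q^tM$ only for $t>\kappa=\reg(G_Q(M))$. So your plan is circular exactly where the difficulty sits; this is why the result requires the different machinery of Linh--Trung \cite{linh-trung2006} and Cuong--Long--Truong \cite{cuong-long-truong2015}, and why the present paper quotes it rather than reproving it.

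A secondary gap is the reduction to $\depth M>0$. Lemma \ref{lemma-for-uniform-bounds-1}\eqref{lemma-for-uniform-bounds-1-a} gives $\reg(G_Q(M))\geq\reg(G_Q(M^\prime))$, which is the wrong direction for your purposes: to bound $\reg(G_Q(M))$ by $\reg(G_Q(M^\prime))$ plus a constant you need the kernel of $G_Q(M)\twoheadrightarrow G_Q(M^\prime)$, whose degree-$n$ piece is $(Q^{n+1}M+Q^nM\cap\h^0_\m(M))/Q^{n+1}M$, to vanish in degrees beyond a bound independent of $Q$; that is a uniform Artin--Rees statement for $\h^0_\m(M)\subseteq M$ valid simultaneously for all parameter ideals, an additional nontrivial input (with hypotheses on $R$ beyond ``Noetherian local'') that you only gesture at, and the same issue already appears in your $r=1$ base case. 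In short, the inductive architecture is sensible, but the two uniform-degree statements that carry all the content are left unproved, and the proposed Hilbert-function squeeze cannot prove the main one.
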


\begin{theorem}\label{thm-generalize-for-modules}
 Let $(R,\m)$ be a Noetherian local ring and $M$ a finitely generated $R$-module of dimension $r\geq 2.$ 
 Then the following conditions are equivalent: 
 \begin{enumerate}
  \item \label{module-a} $M$ is a generalized Cohen-Macaulay module;
  \item \label{module-b} The set $\Lambda_i(M)$ is finite for all $1\leq i\leq r;$
  \item \label{module-c} The set $\Lambda_i(M)$ is finite for all $1\leq i\leq r-\depth M.$
 \end{enumerate}
\end{theorem}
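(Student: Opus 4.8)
The plan is to prove the three conditions equivalent in the cycle \eqref{module-a} $\Rightarrow$ \eqref{module-b} $\Rightarrow$ \eqref{module-c} $\Rightarrow$ \eqref{module-a}. The implication \eqref{module-b} $\Rightarrow$ \eqref{module-c} is trivial since $r-\depth M \leq r$, so the work lies in the other two arrows, and the heart of the matter is the last one.

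For \eqref{module-a} $\Rightarrow$ \eqref{module-b}, I would argue that if $M$ is generalized Cohen-Macaulay then $I(M) < \infty$, and by Theorem \ref{thm:bound on regularity} there is a uniform constant $C$ with $\reg(G_Q(M)) \leq C$ for all parameter ideals $Q$. Feeding $\kappa \leq C$ into Theorem \ref{thm-uniform-bounds-for-modules} gives $|e_1(Q,M)| \leq I(M)$ and $|e_i(Q,M)| \leq (r+1)2^{i-2}(C+1)^{i-1}I(M)$ for $2 \leq i \leq r$, bounds independent of $Q$. Hence each $\Lambda_i(M)$ is a bounded set of integers, so it is finite.

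For \eqref{module-c} $\Rightarrow$ \eqref{module-a}, set $s = \depth M$ and $k = r - s$. If $\Lambda_i(M)$ is finite for all $1 \leq i \leq k$, then Lemma \ref{lemma:finOfFewSets} (applied with $K = \m$, say) yields that $\h_\m^{r-i}(M)$ has finite length for all $1 \leq i \leq k$, i.e. $\h_\m^j(M)$ has finite length for all $s \leq j \leq r-1$. On the other hand, since $\depth M = s$, the modules $\h_\m^j(M)$ vanish for all $j < s$; in particular they have finite length there as well. Combining these two ranges covers all $0 \leq j \leq r-1$, so $M$ is generalized Cohen-Macaulay by definition. One subtlety to address: if $s = r$ then $M$ is already Cohen-Macaulay and there is nothing to prove, and if $s = 0$ then $k = r$ and Lemma \ref{lemma:finOfFewSets} directly gives finite length of $\h_\m^j(M)$ for $0 \leq j \leq r-1$; one should also remark why $r \geq 2$ lets us invoke Lemma \ref{lemma:finOfFewSets}, whose hypothesis requires $\dim M \geq 2$, and that $k \geq 1$ (which holds unless $M$ is Cohen-Macaulay, the trivial case).

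The main obstacle is not really in the logical skeleton, which is short, but in making sure the two halves of the cohomology range in \eqref{module-c} $\Rightarrow$ \eqref{module-a} genuinely fit together: the depth hypothesis kills the bottom cohomology $\h_\m^{<s}(M)$ while Lemma \ref{lemma:finOfFewSets} controls exactly the top $k = r-s$ local cohomologies $\h_\m^{s},\dots,\h_\m^{r-1}$, and one must check there is no gap. Since $r-k = s$, the range $r-k \leq j \leq r-1$ is precisely $s \leq j \leq r-1$, and everything below $s$ vanishes, so the two ranges are complementary and there is no gap. The only genuine hypothesis-checking is the edge case $s=r$, handled separately, and confirming that $\Lambda_i(M)$ finite (parameter ideals with no containment constraint) in particular gives the finiteness of the subset appearing in Lemma \ref{lemma:finOfFewSets}, which is exactly the ``in particular'' clause already recorded in that lemma.
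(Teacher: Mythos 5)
Your proposal is correct and follows essentially the same route as the paper: (a) $\Rightarrow$ (b) via the uniform regularity bound of Theorem \ref{thm:bound on regularity} fed into Theorem \ref{thm-uniform-bounds-for-modules}, (b) $\Rightarrow$ (c) trivially, and (c) $\Rightarrow$ (a) via Lemma \ref{lemma:finOfFewSets}. The only difference is that you spell out the detail the paper leaves implicit — that $\h_\m^j(M)=0$ for $j<\depth M$ fills the range below $r-\depth M$, together with the trivial Cohen--Macaulay edge case — which is exactly the right justification.
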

\begin{proof}
\eqref{module-a} $\Rightarrow $ \eqref{module-b}: Follows from Theorems \ref{thm-uniform-bounds-for-modules} and \ref{thm:bound on regularity}.\\
 \eqref{module-b} $\Rightarrow $ \eqref{module-c}: This is clear.\\
 \eqref{module-c} $\Rightarrow $ \eqref{module-a}:
 Follows from Lemma \ref{lemma:finOfFewSets}.
  \end{proof}

  We now discuss an example from \cite{goto-ozeki} which illustrates the significance of the finiteness of $\Lambda_i(M)$ for $i=\dim M-\depth M.$ 

\begin{example} \cite[Example 3.5]{goto-ozeki} Let $(R,\mathfrak{n})$ be a regular local ring of dimension $d\geq 2$ and $X_1,\ldots,X_d$ a regular system of parameters of $R.$ We put $\mathfrak{p}=(X_1,\ldots,X_{d-1})$ and $D=R/\mathfrak{p}.$ Let $A=R\ltimes D$ be the idealization of $D$ over $R.$ Then $A$ is a Noetherian local ring with the maximal ideal $\m=\mathfrak{n}\times D, \dim A=d$ and $\depth A=1.$ 
By \cite[Example 3.5]{goto-ozeki}
\begin{align*}\label{eqn-22-2}
 \Lambda_i(A)=\begin{cases}
 	                      \{n~|~0< n\in\ints\} &\text{ if }\ i=0 \\
	  \{0\}& \text{ if } 1\leq i\leq d \text{ and } i\neq d-1\\ 
                \{(-1)^{d-1}n~|~0<n\in\ints\} & \text{ if } i=d-1
               \end{cases}
 \end{align*}
 and 
$\h_{\m}^1(A)(\cong \h_{\mathfrak{n}}^1(D))$ is not a 
finitely generated $A$-module. Hence $A$ is not generalized Cohen-Macaulay.
\end{example}
  
As a consequence of Theorem \ref{thm-generalize-for-modules} we obtain a characterization of generalized Cohen-Macaulay rings in terms of the coefficients $g_i^K(Q).$

\begin{theorem}\label{thm-main-gcm2}
 Let $(R,\m)$ be a Noetherian local ring of dimension $d\geq 2$ and $K$ an $\m$-primary ideal of $R.$ Then the following conditions are equivalent:
 \begin{enumerate}
  \item \label{thm-main-gcm2-part1} $R$ is generalized Cohen-Macaulay;
  \item \label{thm-main-gcm2-part2} $\Lambda^K_i(R)$ is finite for all $1\leq i\leq d;$
  \item \label{thm-main-gcm2-part3} $\Lambda^K_i(R)$ is finite for all $1\leq i\leq d-1;$
  \item \label{thm-main-gcm2-part4} $\delta_i^K(R)$ is finite for all $1\leq i\leq d-1.$
 \end{enumerate}
\end{theorem}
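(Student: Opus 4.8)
The plan is to route everything through $K$ regarded as an $R$-module, exploiting the identity $|\Lambda_i^K(R)|=|\Lambda_i(K)|$ together with the analogous statement for the $\delta$-sets. Indeed, by \eqref{eqn-g1} one has $g_i^K(Q)=e_i(Q,K)$ for $i\neq d$ and $g_d^K(Q)=e_d(Q,K)+(-1)^d\ell_R(R/K)$, so for each $i$ the set $\Lambda_i^K(R)$ (resp.\ $\delta_i^K(R)$) is finite if and only if $\{e_i(Q,K)\mid Q\text{ a parameter ideal for }K\}$ (resp.\ the same set restricted to $Q\subseteq K$) is finite. Also $\dim K=d$ as an $R$-module: since $K$ is $\m$-primary, $K_\p=R_\p$ for every prime $\p\neq\m$, hence $\supp K=\supp R$.

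For $(1)\Rightarrow(2)$: if $R$ is generalized Cohen-Macaulay then $K$ is a generalized Cohen-Macaulay $R$-module by Lemma \ref{prop-basic1-gcm}\eqref{prop-basic1-gcm-2}, and $\dim K=d\geq2$, so Theorem \ref{thm-generalize-for-modules} (which rests on the uniform bounds of Theorem \ref{thm-uniform-bounds-for-modules} and the regularity bound of Theorem \ref{thm:bound on regularity}) gives that $\Lambda_i(K)$ is finite for all $1\leq i\leq d$; the translation above yields $(2)$. The implications $(2)\Rightarrow(3)$ and $(3)\Rightarrow(4)$ are immediate, the first by dropping the index $i=d$ and the second from $\delta_i^K(R)\subseteq\Lambda_i^K(R)$.

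The substantive step is $(4)\Rightarrow(1)$. By \eqref{eqn-g1}, finiteness of $\delta_i^K(R)$ for all $1\leq i\leq d-1$ is precisely finiteness of $\{e_i(Q,K)\mid Q\text{ a parameter ideal for }K,\ Q\subseteq K\}$ for all $1\leq i\leq d-1$. Applying Lemma \ref{lemma:finOfFewSets} to $M=K$ with $k=d-1$ --- allowed since $1\leq d-1\leq d=\dim K$ --- we conclude $\ell_R(\h_\m^{d-i}(K))<\infty$ for all $1\leq i\leq d-1$, that is, $\h_\m^j(K)$ has finite length for all $1\leq j\leq d-1$. Since $\h_\m^0(K)$ automatically has finite length, $K$ is a generalized Cohen-Macaulay $R$-module, and then $R$ is generalized Cohen-Macaulay by Lemma \ref{prop-basic1-gcm}\eqref{prop-basic1-gcm-2}. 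The only real work is hidden in the two cited engines --- the uniform coefficient bounds for the forward direction and the finite-length extraction of Lemma \ref{lemma:finOfFewSets} for the converse --- so once those are available the proof is essentially a dictionary between $g_i^K(Q)$ and $e_i(Q,K)$.
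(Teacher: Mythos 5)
Your proposal is correct and follows essentially the same route as the paper: translate everything to the module $K$ via \eqref{eqn-g1}, get $(1)\Rightarrow(2)$ from Lemma \ref{prop-basic1-gcm}\eqref{prop-basic1-gcm-2} and Theorem \ref{thm-generalize-for-modules}, and get $(4)\Rightarrow(1)$ from Lemma \ref{lemma:finOfFewSets} applied to $M=K$ followed by Lemma \ref{prop-basic1-gcm}\eqref{prop-basic1-gcm-2}. The only difference is that you spell out the finite-length bookkeeping (including $\h^0_\m(K)$ and $\dim K=d$) that the paper leaves implicit.
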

\begin{proof}
By Lemma \ref{prop-basic1-gcm}\eqref{prop-basic1-gcm-2}, \eqref{thm-main-gcm2-part1} is equivalent to the generalized Cohen-Macaulayness of $K.$ From \eqref{eqn-g1}, 
$|\Lambda_i(K)|=|\Lambda^K_i(R)|$ for all $1 \leq i \leq d.$ 
Hence  \eqref{thm-main-gcm2-part1} $\Rightarrow$ \eqref{thm-main-gcm2-part2} follows from Theorem \ref{thm-generalize-for-modules}. The implication 
 \eqref{thm-main-gcm2-part2} $\Rightarrow$ \eqref{thm-main-gcm2-part3} $\Rightarrow$ \eqref{thm-main-gcm2-part4} is clear. We show \eqref{thm-main-gcm2-part4} $\Rightarrow$ \eqref{thm-main-gcm2-part1}. Since $\delta_i^K(R)$ is finite, by \eqref{eqn-g1}, 
$$\{e_i(Q,K):Q\mbox{ is a parameter ideal of }R \mbox{ and }Q \subseteq K\}$$
is finite for all $1\leq i\leq d-1.$ Therefore by Lemma \ref{lemma:finOfFewSets}, $K$ is generalized Cohen-Macaulay. Thus $R$ is generalized Cohen-Macaulay by Lemma \ref{prop-basic1-gcm}\eqref{prop-basic1-gcm-2}.  
\end{proof}

In the following theorem we give a characterization for $M/\h^0_\m(M)$ to be Buchsbaum in terms of $\Lambda_i(M).$ 
See also \cite[Theorem 5.4]{vanishing2}.

\begin{theorem} \label{thm:charcOfBuchs}
Let $(R,\m)$ be a Noetherian local ring and $M$ a finitely generated $R$-module of dimension $r\geq 2.$ 
 Then the following statements are equivalent:
 \begin{enumerate}
  \item \label{thm:charcOfBuchsP1} $M/\h^0_\m(M)$ is a Buchsbaum $R$-module;
  \item \label{thm:charcOfBuchsP2} $|\Lambda_i(M)|=1$ for all $1\leq i\leq r;$
  \item \label{thm:charcOfBuchsP3} $|\Lambda_i(M)|=1$ for all $1\leq i\leq r-\depth M.$
 \end{enumerate} 
\end{theorem}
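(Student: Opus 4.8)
The plan is to reduce everything to the already-established module-theoretic machinery, exactly as in the proof of Theorem~\ref{thm-generalize-for-modules}, by replacing "finite" with "singleton" throughout. First I would observe that the implications \eqref{thm:charcOfBuchsP2}~$\Rightarrow$~\eqref{thm:charcOfBuchsP3} is trivial, so the real content is \eqref{thm:charcOfBuchsP1}~$\Rightarrow$~\eqref{thm:charcOfBuchsP2} and \eqref{thm:charcOfBuchsP3}~$\Rightarrow$~\eqref{thm:charcOfBuchsP1}. For \eqref{thm:charcOfBuchsP3}~$\Rightarrow$~\eqref{thm:charcOfBuchsP1}: since each $|\Lambda_i(M)|=1$ in particular forces $\Lambda_i(M)$ to be finite for $1\le i\le r-\depth M$, Theorem~\ref{thm-generalize-for-modules} gives that $M$ is generalized Cohen-Macaulay. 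Setting $M'=M/\h^0_\m(M)$, the relation \eqref{equation:ei-M-&-M'} shows $|\Lambda_i(M')|=1$ for all $1\le i\le r$ as well (the shift by $\ell_R(\h^0_\m(M))$ in degree $r$ is a fixed constant), and $M'$ is a generalized Cohen-Macaulay module of positive depth. Now I would invoke the characterization of Buchsbaum modules via $e_1$: by \cite[Theorem 5.4]{vanishing2} (or directly, since $M'$ is unmixed-in-the-relevant-sense with $\depth M'>0$ and $M'$ is generalized Cohen-Macaulay) applied to the module $M'$, the condition $|\Lambda_1(M')|=1$ already yields that $M'$ is Buchsbaum; the remaining equalities $|\Lambda_i(M')|=1$ for $i\ge 2$ are then automatically consistent via Proposition~\ref{proposition:g1}\eqref{proposition:g1-p1-b}-type formulas (each $e_i$ of a standard parameter ideal is a fixed length combination of the $\h^j_\m(M')$).

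For the converse \eqref{thm:charcOfBuchsP1}~$\Rightarrow$~\eqref{thm:charcOfBuchsP2}: if $M'=M/\h^0_\m(M)$ is Buchsbaum then every parameter ideal $Q$ for $M$ (equivalently for $M'$) is standard for $M'$, so by \cite[Corollary 4.2]{trung-gcm} each $e_i(Q,M')$ equals the $Q$-independent quantity $(-1)^i\sum_{j=1}^{r-i}\binom{r-i-1}{j-1}\ell_R(\h^j_\m(M'))$ for $1\le i\le r-1$ and the corresponding expression for $i=r$. Hence $|\Lambda_i(M')|=1$ for all $1\le i\le r$, and then \eqref{equation:ei-M-&-M'} transfers this to $|\Lambda_i(M)|=1$ for all $1\le i\le r$ (again the degree-$r$ discrepancy is a fixed constant). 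This closes the cycle.

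The main obstacle I anticipate is the case $\depth M=0$ versus $\depth M>0$ bookkeeping: when $\depth M=0$ one must be careful that the range $1\le i\le r-\depth M$ in \eqref{thm:charcOfBuchsP3} is exactly what Lemma~\ref{lemma:finOfFewSets}/Theorem~\ref{thm-generalize-for-modules} needs, and that passing to $M'$ does not lose information about $\Lambda_r$ — but \eqref{equation:ei-M-&-M'} shows $\Lambda_r(M)$ and $\Lambda_r(M')$ differ only by the fixed translation $(-1)^r\ell_R(\h^0_\m(M))$, so singleton-ness is preserved in both directions. A secondary subtlety is justifying that $|\Lambda_1(M')|=1$ alone suffices for Buchsbaumness of $M'$: this is precisely where \cite[Theorem 5.4]{vanishing2} is used (the unmixed hypothesis there is met because $M'$ has positive depth and, once we know $M'$ is generalized Cohen-Macaulay, $\dim U_{M'}(0)\le r-2$ by \cite[Theorem 5.5]{vanishing2} combined with Lemma~\ref{lemma-0}), so I would make sure to cite it rather than reprove it. Everything else is routine substitution into the formulas already derived.
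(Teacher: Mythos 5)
Your proposal follows essentially the same route as the paper: \eqref{thm:charcOfBuchsP1}$\Rightarrow$\eqref{thm:charcOfBuchsP2} via standardness of every parameter ideal for $M'=M/\h^0_\m(M)$ and \cite[Corollary 4.2]{trung-gcm}, transferred to $M$ by \eqref{equation:ei-M-&-M'}; \eqref{thm:charcOfBuchsP2}$\Rightarrow$\eqref{thm:charcOfBuchsP3} trivially; and \eqref{thm:charcOfBuchsP3}$\Rightarrow$\eqref{thm:charcOfBuchsP1} by first obtaining generalized Cohen--Macaulayness from Theorem \ref{thm-generalize-for-modules} and then applying \cite[Theorem 5.4]{vanishing2} to $M'$, for which only $|\Lambda_1(M')|=1$ is needed.

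The one place your argument is wrong as written is the verification of the unmixedness hypothesis of \cite[Theorem 5.4]{vanishing2}. Knowing $\dim U_{M'}(0)\le r-2$ (via \cite[Theorem 5.5]{vanishing2} and Lemma \ref{lemma-0}) together with $\depth M'>0$ does not yield unmixedness: unmixed means $U_{\widehat{M'}}(0)=0$, not merely that this submodule has small dimension, and a module of positive depth can perfectly well have an embedded-dimension component of dimension $\le r-2$. The correct and simpler justification, which is what the paper uses, is \cite[Lemma 1.2]{trung-gcm}: since $\widehat{M'}$ is a generalized Cohen--Macaulay $\widehat R$-module, every associated prime of $\widehat{M'}$ other than the maximal ideal has dimension $r$, and $\depth_{\widehat R}\widehat{M'}>0$ rules out the maximal ideal, so $M'$ is unmixed. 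With that substitution your proof coincides with the paper's. A minor overreach, harmless because you only use $i=1$: in \eqref{thm:charcOfBuchsP3}$\Rightarrow$\eqref{thm:charcOfBuchsP1}, \eqref{equation:ei-M-&-M'} transfers $|\Lambda_i(M)|=1$ to $|\Lambda_i(M')|=1$ only for $1\le i\le r-\depth M$, not for all $i\le r$ as you assert; the full range for $M'$ is only recovered at the end, once $M'$ is known to be Buchsbaum.
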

\begin{proof}
 \eqref{thm:charcOfBuchsP1} $\Rightarrow$ \eqref{thm:charcOfBuchsP2}: Let $M^\prime:=M/\h^0_\m(M).$ Since $M^\prime$ is Buchsbaum, every parameter ideal $Q$ for $M^\prime$ is standard. Hence by \cite[Corollary 4.2]{trung-gcm}, 
 $e_i(Q,M^\prime)=(-1)^i\mathop\sum\limits_{j=0}^{r-i}\binom{r-i-1}{j-1}\ell_R(\h^j_\m(M^\prime))$ for all $1\leq i\leq r.$ Thus $|\Lambda_i(M^\prime)|=1$ for all $1\leq i\leq r.$ Hence, using \eqref{equation:ei-M-&-M'}, $|\Lambda_i(M)|=|\Lambda_i(M^\prime)|=1$ for all $1 \leq i \leq r.$ \\
 \eqref{thm:charcOfBuchsP2} $\Rightarrow$ \eqref{thm:charcOfBuchsP3}: This is clear.\\
 \eqref{thm:charcOfBuchsP3} $\Rightarrow$ \eqref{thm:charcOfBuchsP1}: Let $M^\prime:=M/\h^0_\m(M).$ Since $|\Lambda_i(M)|=|\Lambda_i(M^\prime)|$ by \eqref{equation:ei-M-&-M'}, 
 $|\Lambda_i(M^\prime)|=1$ for all $1 \leq i \leq r-\depth M.$ Hence, by Theorem \ref{thm-generalize-for-modules}, $M^\prime$ is a generalized Cohen-Macaulay module $R$-module. This implies that $\widehat{M^\prime}$ is a generalized Cohen-Macaulay 
 $\widehat{R}$-module.  Since $\depth_{\widehat{R}} \widehat{M^\prime}>0,$ using \cite[Lemma 1.2]{trung-gcm}, we conclude that $M^\prime$ is an unmixed module. 
 Hence, by \cite[Theorem 5.4]{vanishing2}, $M^\prime$ is a Buchsbaum $R$-module. 
\end{proof}

As a consequence we give a sufficient condition for $R/\h^0_\m(R)$ to be Buchsbaum in terms of $\Lambda_i^K(R).$ 

\begin{theorem}\label{thm-main-Buchs} 
 Let $(R,\m)$ be a Noetherian local ring of dimension $d\geq 2$ and $K$ an $\m$-primary ideal of $R.$
 \begin{enumerate}
  \item \label{thm-main-Buchs-part1} Suppose $|\Lambda^K_i(R)|=1$ for all $1\leq i\leq d-1.$ Then $R/\h^0_\m(R)$ is Buchsbaum. 
  \item \label{thm-main-Buchs-part2} If $R/\h^0_\m(R)$ is Buchsbaum then $|\Lambda^\m_i(R)|=1$ for all $1\leq i\leq d.$
   \end{enumerate}
\end{theorem}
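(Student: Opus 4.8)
The plan for \eqref{thm-main-Buchs-part1} is to upgrade the singleton hypothesis to generalized Cohen--Macaulayness and then descend to the ring $R':=R/\h^0_\m(R)$. First, since $|\Lambda_i^K(R)|=1$ forces each $\Lambda_i^K(R)$ with $1\le i\le d-1$ to be finite, Theorem~\ref{thm-main-gcm2} yields that $R$ is generalized Cohen--Macaulay; hence $U:=\h^0_\m(R)$ has finite length (so $\dim U\le 0\le d-2$), $R'$ has dimension $d$ and positive depth, and $R'$ is again generalized Cohen--Macaulay since $\h^i_\m(R')\cong\h^i_\m(R)$ for $i\ge1$. Passing to completions and applying \cite[Lemma 1.2]{trung-gcm} exactly as in the proof of Theorem~\ref{thm:charcOfBuchs}, $R'$ is unmixed. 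Writing $\bar K:=KR'$, which is $\m R'$-primary, \cite[Lemma 3.6]{saloni-1} (which, as noted in the proof of Theorem~\ref{thm-main2}, needs no containment hypothesis on $Q$) gives $g_1^K(Q)=g_1^{\bar K}(QR')$ for every parameter ideal $Q$ of $R$; and since every parameter ideal of $R'$ is of the form $QR'$ for a parameter ideal $Q$ of $R$ (here one uses that $U$ has finite length), this shows $\Lambda_1^{\bar K}(R')=\Lambda_1^K(R)$, which is a singleton. Then Theorem~\ref{thm: Buchsbaum}\eqref{thm: Buchsbaum-p1}, applied to the unmixed ring $R'$, gives that $R'=R/\h^0_\m(R)$ is Buchsbaum.

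For \eqref{thm-main-Buchs-part2}, the plan is to reduce to Lemma~\ref{prop-basic1-gcm}\eqref{prop-basic1-gcm-4} together with Theorem~\ref{thm:charcOfBuchs} applied to the module $M=\m$. Put $R':=R/\h^0_\m(R)$; by hypothesis $R'$ is Buchsbaum, and since $\h^0_\m(R)$ has finite length and $d\ge2$, $R'$ has dimension $d$ and positive depth. Because $\h^0_\m(R)\subseteq\m$, the image of $\m$ in $R'$ equals the maximal ideal $\m R'$ of $R'$, while $\h^0_\m(\m)=\m\cap\h^0_\m(R)=\h^0_\m(R)$; hence $\m/\h^0_\m(\m)$ is precisely $\m R'$ viewed as an $R$-module. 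By Lemma~\ref{prop-basic1-gcm}\eqref{prop-basic1-gcm-4} applied to the Buchsbaum ring $R'$, the maximal ideal $\m R'$ is a Buchsbaum $R'$-module, hence a Buchsbaum $R$-module. Therefore $\m/\h^0_\m(\m)$ is Buchsbaum, and Theorem~\ref{thm:charcOfBuchs}\eqref{thm:charcOfBuchsP1}$\Rightarrow$\eqref{thm:charcOfBuchsP2} applied to $\m$ (of dimension $d\ge2$) gives $|\Lambda_i(\m)|=1$ for all $1\le i\le d$. Finally \eqref{eqn-g1} shows $g_i^\m(Q)=e_i(Q,\m)$ for $1\le i\le d-1$ and $g_d^\m(Q)=e_d(Q,\m)+(-1)^d\ell_R(R/\m)$, so $|\Lambda_i^\m(R)|=|\Lambda_i(\m)|=1$ for all $1\le i\le d$.

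I expect the main obstacle to be the descent step in \eqref{thm-main-Buchs-part1}. One point of care is that one should not pass to the completion at the outset: completion can only enlarge $\Lambda_i^K(R)$, so the singleton hypothesis need not survive, and instead completeness must be used only internally, through the cited results (whose statements do not assume it) and through the definition of ``unmixed''. The other delicate point is the identification $\Lambda_1^{\bar K}(R')=\Lambda_1^K(R)$, which requires both that \cite[Lemma 3.6]{saloni-1} applies with the non-top-dimensional submodule $U=\h^0_\m(R)$ and that the assignment $Q\mapsto QR'$ surjects onto the parameter ideals of $R'$. Beyond these, everything is a bookkeeping reduction to results already established.
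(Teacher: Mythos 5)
Your part \eqref{thm-main-Buchs-part2} is essentially the paper's proof: apply Lemma~\ref{prop-basic1-gcm}\eqref{prop-basic1-gcm-4} to the Buchsbaum ring $R/\h^0_\m(R)$, identify its maximal ideal with $\m/\h^0_\m(\m)$, and conclude via Theorem~\ref{thm:charcOfBuchs} for $M=\m$ together with \eqref{eqn-g1}.

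For part \eqref{thm-main-Buchs-part1} you take a correct but genuinely different, ring-level route. The paper stays at the module level: by \eqref{eqn-g1} the hypothesis gives $|\Lambda_i(K)|=1$ for $1\le i\le d-1$, Theorem~\ref{thm:charcOfBuchs} with $M=K$ yields that $K/\h^0_\m(K)$ is Buchsbaum, and Lemma~\ref{prop-basic1-gcm}\eqref{prop-basic1-gcm-3}, applied to $R'=R/\h^0_\m(R)$ and the $\m R'$-primary ideal $KR'\cong K/\h^0_\m(K)$, immediately gives that $R'$ is Buchsbaum --- no unmixedness of $R'$ and no comparison of parameter ideals of $R$ and $R'$ are needed. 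You instead first extract generalized Cohen--Macaulayness of $R$ from Theorem~\ref{thm-main-gcm2}, pass to $R'$, check it is unmixed (the same completion argument used inside Theorem~\ref{thm:charcOfBuchs}), transfer the singleton condition on $g_1$ to $R'$, and invoke Theorem~\ref{thm: Buchsbaum}\eqref{thm: Buchsbaum-p1}. The two delicate points you flag are indeed fine: surjectivity of $Q\mapsto QR'$ holds because $\ell_R(\h^0_\m(R))<\infty$, and the equality $g_1^K(Q)=g_1^{KR'}(QR')$ does not really need the precise form of \cite[Lemma 3.6]{saloni-1}, since $\ell_R(R/KQ^n)-\ell_{R'}(R'/KQ^nR')=\ell_R(U/(U\cap KQ^n))$ is bounded, so the two Hilbert polynomials differ by a constant and only the degree-zero (i.e.\ $i=d$) coefficient can change; alternatively, for a generalized Cohen--Macaulay ring the unmixed component of $\widehat{R}$ is exactly $\h^0_\m(\widehat R)$, so the lemma applies as in Theorem~\ref{thm-main3}. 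What your route buys is that the singleton hypothesis is only used at $i=1$ (finiteness for $i\le d-1$ suffices for the rest), and it runs parallel to Theorem~\ref{thm-main3}; what it costs is the extra bookkeeping, and note that in the end both arguments rest on the same inputs, namely \cite[Theorem 5.4]{vanishing2} and Lemma~\ref{prop-basic1-gcm}\eqref{prop-basic1-gcm-3}, since Theorem~\ref{thm: Buchsbaum}\eqref{thm: Buchsbaum-p1} is itself proved from them.
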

\begin{proof} \eqref{thm-main-Buchs-part1}: From \eqref{eqn-g1}, $|\Lambda_i^K(R)|=|\Lambda_i(K)|.$ Hence taking $M=K$ in Theorem \ref{thm:charcOfBuchs}, we get that $K/\h^0_\m(K)$ is Buchsbaum. Thus, by Lemma \ref{prop-basic1-gcm}\eqref{prop-basic1-gcm-3}, $R/\h^0_\m(R)$ is Buchsbaum. 

\eqref{thm-main-Buchs-part2}: By Lemma \ref{prop-basic1-gcm}\eqref{prop-basic1-gcm-4}, $\m/\h^0_\m(\m)$ is a Buchsbaum $R$-module. Since $|\Lambda_i^\m(R)|=|\Lambda_i(\m)|,$ by Theorem \ref{thm:charcOfBuchs}, the result follows.
\end{proof}

\section{The set $\Delta^{K}(R)$}\label{section-finiteness-in-general}
For an $R$-module $M$, we set 
\begin{eqnarray*}
\Delta_R(M)&=&\{e_1(I,M)~|~I \text{ is an $\m$-primary ideal of }R\}.
\end{eqnarray*}
In \cite{KT} authors gave a necessary and sufficient condition for the finiteness of the set $\Delta_R(R).$ In this section we give an equivalent criterion for 
the finiteness of the set $\Delta^K(R)$  (Theorem \ref{theorem:finiteness-of-g1}). For this purpose we first give a characterization for the set $\Delta_R(M)$ to be finite (Theorem \ref{theorem:finHilCoeffOfNoethModules}). We use a bound given by T.~Puthenpurakal, \cite[Theorem 18]{tony}, to give a sufficient
condition for the finiteness of $\Delta_R(M).$ In order to obtain a necessary condition we use ``induction''.

We need few lemmas in order to prove Theorem \ref{theorem:finHilCoeffOfNoethModules}. 
In the following lemma we show that if $\Delta_R(M)$ is finite then $\dim M=1.$ Proof given here is similar to the proof of \cite[Lemma 3.1]{KT}.

\begin{lemma}\label{lemma:finite implies d=1}
Let $(R,\mathfrak{m})$ be a Noetherian local ring and $M$ a finitely generated $R$-module of dimension $r> 0.$ Suppose $\Delta_R(M)$ is a finite set. Then $r=1.$
\end{lemma}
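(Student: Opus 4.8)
The plan is to prove the contrapositive: assuming $r = \dim M \geq 2$, I will construct an infinite family of $\mathfrak{m}$-primary ideals $\{I_n\}$ for which $e_1(I_n, M)$ takes infinitely many values, contradicting the finiteness of $\Delta_R(M)$. The natural source of such a family is powers of $\mathfrak{m}$, or more flexibly, ideals of the form $Q + \mathfrak{m}^{n}$ for a fixed parameter ideal $Q$, since $e_1$ of these typically grows polynomially in $n$ of degree related to $r$. Since we only need $r = 1$ as the conclusion, it suffices to exhibit growth; the cleanest route is to pass to $\widehat{R}$ and then, after killing the part of $M$ supported in lower dimension if necessary, reduce to a setting where we can compute or bound $e_1$ from below.

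First I would reduce to the case where $R$ is complete (the Hilbert coefficients $e_1(I,M)$ and $e_1(I\widehat{R}, \widehat{M})$ agree, and $\widehat{M}$ has the same dimension), and I may assume the residue field is infinite by the standard faithfully-flat extension $R \to R[X]_{\mathfrak{m}R[X]}$, which does not change any of the relevant lengths or coefficients. Next, I would like to work with a module on which $e_1$ is easier to control: following the idea in \cite[Lemma 3.1]{KT}, choose a parameter ideal $Q = (x_1, \ldots, x_r)$ for $M$, and consider the family of $\mathfrak{m}$-primary ideals $I_n = (x_1, x_2, \ldots, x_{r-1}, x_r^{}) + \ldots$ — more precisely, ideals obtained by fixing all but one parameter and feeding in increasing powers, so that the associated graded behaves like that of a module of dimension $1$ over a quotient, forcing $|e_1(I_n,M)|$ to grow at least linearly in $n$.

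The key computational step is a lower bound of the shape $|e_1(I_n, M)| \geq c \cdot n$ (or some strictly increasing function of $n$) for a constant $c > 0$ depending on $M$ but not on $n$. I would obtain this by relating $e_1(I_n, M)$ to the multiplicity $e_0$ of a lower-dimensional module: writing an exact sequence that separates the contribution of $x_r^n$, one sees a term like $e_0((x_1, \ldots, x_{r-1}), M/(\text{something}))$ appearing with a factor of $n$, exactly as in the computation inside the proof of Lemma \ref{lemma-0} and Lemma \ref{lemma:finOfFewSets} in this paper, where $e_1(Q,M) = e_1(Q,T) - e_0(q, U')$ and the $e_0$ term carries a factor $s^{r-1}$. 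The point is that when $r \geq 2$ this multiplicity term is nonzero (because $M$ has a minimal prime $\mathfrak p$ with $\dim R/\mathfrak p = r \geq 2$, so $M$ localized there has positive length and the relevant $e_0$ is positive), hence $|e_1(I_n, M)| \to \infty$.

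The main obstacle I anticipate is handling the non-unmixed case cleanly: if $M$ has embedded or lower-dimensional components, the naive exact-sequence manipulation produces extra terms, and I must ensure the dominant $n$-growth term does not get cancelled. The fix is to quotient by the lower-dimensional part — replace $M$ by $M/U$ where $U = U_M(0)$ is the unmixed component — using \cite[Lemma 3.3]{vanishing2} or \cite[Remark 4.4]{vanishing2} to control how $e_1$ changes (it changes by a contribution from $U$ which, for ideals $I_n$ with $I_n$ large enough, is itself polynomial in $n$ and of degree $\leq \dim U - 1 < r - 1$, hence lower order), so the leading growth of $e_1(I_n, M/U)$ still dominates. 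Alternatively, and perhaps more simply, I can first pass to $M/\h^0_\mathfrak{m}(M)$, which changes only $e_r$ and not $e_1$ by \eqref{equation:ei-M-&-M'}, and then argue over the support. Once the lower bound $|e_1(I_n,M)| \to \infty$ is established in all cases, the set $\Delta_R(M)$ is infinite, contradiction; therefore $r = 1$, completing the proof.
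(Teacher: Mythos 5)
There is a genuine gap in your key computational step. The growth mechanism you invoke --- the term $e_0(q,U')$ carrying a factor $s^{r-1}$ in the proofs of Lemma \ref{lemma-0} and Lemma \ref{lemma:finOfFewSets} --- is attached to a \emph{lower-dimensional} component $U=U_M(0)$ of dimension exactly $r-1$ that is killed by the last parameter; it is not the multiplicity of $M$ along a top-dimensional minimal prime. If $M$ is unmixed, and in particular if $M$ is Cohen--Macaulay of dimension $r\geq 2$ (e.g.\ $M=R$ a two-dimensional regular local ring), that term vanishes and your family produces no growth at all: the ideals $I_n=(x_1,\dots,x_{r-1},x_r^n)$ are parameter ideals, and for a Cohen--Macaulay module $e_1(Q,M)=0$ for \emph{every} parameter ideal $Q$ (and $e_1(Q,M)\leq 0$ in general), so $|e_1(I_n,M)|$ does not tend to infinity. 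The variant $I_n=Q+\m^n$ fares no better, since this family stabilizes to $Q$ for $n\gg 0$. So your argument proves nothing precisely in the most basic case $r\geq 2$ with $M$ Cohen--Macaulay, and the justification ``$M$ has a minimal prime $\p$ with $\dim R/\p=r$, so the relevant $e_0$ is positive'' conflates the top-dimensional multiplicity (which feeds into $e_0(I_n,M)$, not into the growth of $e_1$) with the multiplicity of the missing lower-dimensional piece.

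The point you need, and the one the paper uses (following \cite[Lemma 3.1]{KT}), is that the unbounded family must come from \emph{powers of a single $\m$-primary ideal}, not from parameter-type ideals. Comparing $\ell_R(M/(I^k)^{n+1}M)$ with $\ell_R(M/I^{kn+k}M)$ and matching coefficients gives
\begin{equation*}
e_1(I^k,M)=\frac{r-1}{2}\,e_0(I,M)\,k^r+\frac{2e_1(I,M)-(r-1)e_0(I,M)}{2}\,k^{r-1},
\end{equation*}
so for $r\geq 2$ the leading term $\frac{r-1}{2}e_0(I,M)k^r$ is a nonzero polynomial in $k$ (as $e_0(I,M)>0$), and $\{e_1(I^k,M):k\geq 1\}$ is already infinite, contradicting the finiteness of $\Delta_R(M)$; this is purely formal, needing none of your reductions (completion, infinite residue field, passage to $M/U$ or $M/\h^0_\m(M)$). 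Concretely, in a two-dimensional regular local ring one has $e_1(\m^k,R)=\binom{k}{2}$, which is unbounded even though every parameter ideal has $e_1=0$ --- this single example shows both why your proposed families cannot work and why taking powers does.
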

\begin{proof}
 Let $I$ be an $\mathfrak{m}$-primary ideal of $R$ and $k \geq 1$ an integer. We have
\begin{equation}
 \label{equation:HilbPolyforIk}
\ell_R(M/(I^k)^{n+1}M) = e_0(I^k,M)\dbinom{n+r}{r}-e_1(I^k,M)\dbinom{n+r-1}{r-1}+\ldots+(-1)^re_r(I^k,M).
\end{equation}
Also,
\begin{eqnarray}\label{equation:HilbPolyforI}
\nonumber \ell_R(M/I^{kn+k}M)& = & e_0(I,M)\dbinom{(kn+k-1)+r}{r}-e_1(I,M)\dbinom{(kn+k-1)+r-1}{r-1}\\
                     &+&\ldots+(-1)^re_r(I,M).
\end{eqnarray}
Note that 
\begin{eqnarray*}
  \dbinom{kn+k+r-1}{r} & = & k^r\dbinom{n+r}{r}+\left(k^{r-1}-k^{r}\right)\left(\frac{r-1}{2}\right)\dbinom{n+r-1}{r-1}+\mbox{lower degree terms } \mbox{ and }\\
  \dbinom{kn+k+r-2}{r-1} & = & k^{r-1}\dbinom{n+r-1}{r-1}+\mbox{lower degree terms}.
\end{eqnarray*}
Comparing \eqref{equation:HilbPolyforIk} and \eqref{equation:HilbPolyforI}, we get 
\begin{eqnarray} 
\nonumber e_0(I^k,M)&=&k^re_0(I,M) \mbox{ and } \\
\label{equation:expressionFore0Ande1}
e_1(I^k,M)&=&\frac{r-1}{2}e_0(I,M)k^r+\frac{2e_1(I,M)-(r-1)e_0(I,M)}{2}k^{r-1}.
\end{eqnarray}
Since $\Delta_R(M)$ is a finite set, the set $\{e_1(I^k,M)~|~k \geq \mbox{ 1 is an integer}\}$ is also finite. Hence using \eqref{equation:expressionFore0Ande1}, we get $r=1$.
\end{proof}

In view of Lemma \ref{lemma:finite implies d=1}, we assume that $r=1$ while examining the finiteness of the set $\Delta_R(M)$.
Now we recall the following theorem from \cite{KT} which will be used in this section.

\begin{theorem} \rm\cite[Theorem 1.1]{KT}\label{KT theorem}
Let $(R, \m)$ be a Noetherian local ring of dimension $d> 0$. Then the following conditions are equivalent:
\begin{enumerate}
\item $\Delta_R(R)$ is a finite set;
\item $d=1$ and $R/\h_\m^0(R)$ is analytically unramified.
\end{enumerate}
\end{theorem}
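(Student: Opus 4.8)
The plan is to reduce at once to the case $\depth R>0$. Put $W=\h^0_\m(R)$ and $R'=R/W$. Since $\m^t W=0$ for some $t$ and every $\m$-primary ideal $I$ is contained in $\m$, Artin--Rees gives $W\cap I^{n+1}=0$ for $n\gg0$, whence
\[
\ell_R(R/I^{n+1})=\ell_R(R'/I^{n+1}R')+\ell_R(W)\qquad(n\gg0).
\]
Comparing Hilbert polynomials shows $\dim R=\dim R'$, $e_0(I,R)=e_0(I,R')$, and, once $\dim R=1$, $e_1(I,R)=e_1(I,R')-\ell_R(W)$ for every $\m$-primary $I$; hence $\Delta_R(R)$ is finite if and only if $\Delta_{R'}(R')$ is finite, and ``$R/\h^0_\m(R)$ analytically unramified'' is literally ``$R'$ analytically unramified''. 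In the forward direction, Lemma~\ref{lemma:finite implies d=1} applied with $M=R$ already forces $\dim R=1$. So in both directions it suffices to prove: for a $1$-dimensional Cohen--Macaulay local ring $R'$ (equivalently, $\dim R'=1$ and $\depth R'>0$), $\Delta_{R'}(R')$ is finite if and only if $R'$ is analytically unramified.

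The computational heart is an identity in a $1$-dimensional Cohen--Macaulay local ring $R'$: after harmlessly enlarging the residue field, let $(x)$ be a minimal reduction of an $\m$-primary ideal $I$. Since $x$ is a nonzerodivisor, $\ell_{R'}(I^{\,k}/xI^{\,k-1})=\ell_{R'}(I^{\,k}/x^kR')-\ell_{R'}(I^{\,k-1}/x^{k-1}R')$, so telescoping gives $\sum_{k=1}^{n}\ell_{R'}(I^{\,k}/xI^{\,k-1})=\ell_{R'}(I^{\,n}/x^nR')$, and this equals $e_1(I,R')$ for $n\gg0$ (compare the constant term in $\ell_{R'}(R'/I^{\,n})=n\,e_0(I,R')-e_1(I,R')$). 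In particular $e_1(I,R')=\ell_{R'}(I^{\,n}/x^nR')\ge0$ for $n\gg0$. Moreover $x^nR'\subseteq I^{\,n}\subseteq\overline{I^{\,n}}=\overline{(x^n)}$, and when $R'$ is reduced $\overline{(x^n)}=x^n\overline{R'}\cap R'$, so
\[
\ell_{R'}(I^{\,n}/x^nR')\ \le\ \ell_{R'}\big((x^n\overline{R'}\cap R')/x^nR'\big)\ =\ \ell_{R'}\big((\overline{R'}\cap x^{-n}R')/R'\big),
\]
and the $R'$-modules $\overline{R'}\cap x^{-n}R'$ increase with $n$, with union $\overline{R'}$.

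These remarks settle both implications for $R'$. If $R'$ is analytically unramified, then it is reduced and, by Rees's theorem, $\overline{R'}$ is a finite $R'$-module, so $\ell_{R'}(\overline{R'}/R')<\infty$ is a uniform upper bound for $e_1(I,R')$ and $\Delta_{R'}(R')$ is finite. Conversely, suppose $\Delta_{R'}(R')$ is finite. If $R'$ is reduced but not analytically unramified, then by Rees's criterion $\overline{R'}$ is not $R'$-finite, so $\ell_{R'}(\overline{R'}/R')=\infty$; taking $I_m=\overline{(x^m)}$ (an $\m$-primary, integrally closed ideal with minimal reduction $(x^m)$), the identity above gives $e_1(I_m,R')\ge\ell_{R'}(I_m/x^mR')=\ell_{R'}\big((\overline{R'}\cap x^{-m}R')/R'\big)\to\infty$, a contradiction. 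It remains to exclude the case that $R'$ is \emph{not reduced}: since $\h^0_\m(R')=0$, the nilradical $\mathfrak n$ of $R'$ then has dimension $1$, and an analogous estimate---comparing $R'$ with $R'/\mathfrak n$ along the good filtration $\{I^{\,n}\cap\mathfrak n\}$ of $\mathfrak n$---shows that this too forces $e_1(I,R')$ to be unbounded. Thus $\Delta_{R'}(R')$ finite implies $R'$ analytically unramified, and undoing the reduction of the first paragraph gives the asserted equivalence for every $R$ with $d>0$. The genuinely non-elementary inputs are Rees's two theorems on $1$-dimensional analytically unramified local rings, and this non-reduced case is the fiddliest point; everything else is bookkeeping with Hilbert polynomials.
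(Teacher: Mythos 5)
The paper itself offers no proof of this statement---it is imported verbatim from \cite{KT}---so there is nothing internal to compare with; judged on its own terms, most of your argument is sound and follows the Koura--Taniguchi line (which also underlies Theorem \ref{theorem:DescrofSet} of the paper): the reduction to $R'=R/\h^0_\m(R)$ via $e_1(I,R)=e_1(IR',R')-\ell_R(\h^0_\m(R))$, the use of Lemma \ref{lemma:finite implies d=1} to force $d=1$, the identity $e_1(I,R')=\ell_{R'}(I^n/x^nR')$ for $n\gg 0$ obtained from a principal reduction $(x)$, the uniform bound by $\ell_{R'}(\overline{R'}/R')$ when $R'$ is analytically unramified, and the test ideals $I_m=\overline{(x^m)}$ when $R'$ is reduced but not analytically unramified.

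The genuine gap is the non-reduced case, which you only gesture at. Comparing $R'$ with $R'/\mathfrak{n}$ along the good filtration $\{I^n\cap\mathfrak{n}\}$ yields $e_1(I,R')=e_1(I,R'/\mathfrak{n})+s_1(I)$, where $s_1(I)$ is the first coefficient of that filtration and satisfies $s_1(I)\ge e_1(I,\mathfrak{n})$; this by itself does not force unboundedness, since both $e_1(I,R'/\mathfrak{n})$ and $e_1(I,\mathfrak{n})$ may vanish while $e_1(I,R')$ grows. Concretely, for $R'=k[[t,y]]/(y^2)$, $\mathfrak{n}=(y)$ and $I=(t^m,y)$ one has $e_1(I,R'/\mathfrak{n})=0$ and $e_1(I,\mathfrak{n})=0$ (indeed $I^n\mathfrak{n}=(t^{mn}y)$), yet $e_1(I,R')=m$: the growth lives entirely in the discrepancy between $\{I^n\cap\mathfrak{n}\}$ and $\{I^n\mathfrak{n}\}$, which your sketch does not control. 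The fix is to keep the same test ideals as in your reduced case: nilpotents are integral over every ideal, so $\mathfrak{n}\subseteq I_m:=\overline{(x^m)}$, and your telescoping identity (valid without any reducedness) gives $e_1(I_m,R')\ge\ell_{R'}(I_m/x^mR')\ge\ell_{R'}\bigl(\mathfrak{n}/(\mathfrak{n}\cap x^mR')\bigr)$; by Artin--Rees, $\mathfrak{n}\cap x^mR'\subseteq x^{m-c}\mathfrak{n}$ for $m\ge c$, so this length is at least $e_0((x),\mathfrak{n})(m-c)\to\infty$, because $\dim\mathfrak{n}=1$ (here $\h^0_\m(R')=0$ is used). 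With this correction the single family $I_m=\overline{(x^m)}$ handles the reduced and non-reduced cases uniformly, and the remaining ingredients you invoke (that $R\mapsto R(X)$ preserves analytic unramifiedness, and that a one-dimensional reduced local ring is analytically unramified if and only if $\overline{R}$ is module-finite) are indeed the standard non-elementary inputs.
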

To discuss the finiteness of $\Delta_R(M)$, we first provide bounds on this set in the following proposition. 

\begin{proposition}
 \label{proposition:SupAndInfOfSet}
 Let $(R,\m)$ be a Noetherian local ring of dimension one and $M$ a finitely generated $R$-module of dimension one. Then
 \begin{enumerate}
  \item $\inf \Delta_R(M)=-\ell_R(\h_\m^0(M)).$
  \label{proposition:InfOfSet}
  \item $\sup \Delta_R(M)\leq \ell_{R^\prime}(\overline{R^\prime}/R^\prime)\mu_{R^\prime}(M^\prime),$ where 
  $R^\prime:=R/\h_\m^0(R)$ and $M^\prime:=M/\h_\m^0(M).$ Here $\ov{R^\prime}$ denotes the integral closure of $R^\prime$ in its total ring of fractions.
  \label{proposition:SupOfSet}
  \end{enumerate}
\end{proposition}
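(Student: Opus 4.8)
The plan is to reduce to the case where $M$ is Cohen--Macaulay of dimension one, and then to analyze the Hilbert coefficient $e_1(I,M)$ through the associated graded module or, more efficiently, through the integral closure filtration. First, write $W = \h^0_\m(M)$ and $M' = M/W$, so that $M'$ is a one-dimensional $R$-module with $\depth M' > 0$. For any $\m$-primary ideal $I$, the exact sequence $0 \to W/(W\cap I^{n+1}M) \to M/I^{n+1}M \to M'/I^{n+1}M' \to 0$ together with the fact that $W$ has finite length (so $W \cap I^{n+1}M = 0$ for $n \gg 0$) gives $\ell_R(M/I^{n+1}M) = \ell_R(M'/I^{n+1}M') + \ell_R(W)$ for large $n$; comparing Hilbert polynomials yields $e_0(I,M) = e_0(I,M')$, $e_1(I,M) = e_1(I,M')$, and $e_1(I,M)$ is unchanged, while the constant term absorbs $\ell_R(W)$. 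Wait --- more carefully, for dimension one the polynomial is $e_0(I,M)(n+1) - e_1(I,M)$, so the added constant $\ell_R(W)$ enters $e_1$ with a sign. I would instead argue: this shows $\Delta_R(M)$ and $\{e_1(I,M') - \ell_R(W) : I\} $ differ by the shift $\ell_R(\h^0_\m(M))$, so it suffices to compute $\inf$ and $\sup$ for the Cohen--Macaulay module $M'$ and then translate.

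For part \eqref{proposition:InfOfSet}: over the Cohen--Macaulay module $M'$ one has $e_1(I,M') \geq 0$ for every $\m$-primary $I$ (this is the classical nonnegativity of $e_1$ for CM modules, e.g. via a superficial element reducing to the principal case where $e_1 \geq 0$). Hence $e_1(I,M) = e_1(I,M') - \ell_R(\h^0_\m(M)) \geq -\ell_R(\h^0_\m(M))$, with equality precisely when $e_1(I,M') = 0$, which occurs for instance when $I$ is generated by a superficial element (a principal reduction), so the infimum is attained. This gives $\inf \Delta_R(M) = -\ell_R(\h^0_\m(M))$.

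For part \eqref{proposition:SupOfSet}: the idea is to bound $e_1(I,M')$ above uniformly in $I$. Here I would pass to $R' = R/\h^0_\m(R)$, over which $M'$ is a faithful-enough module, and compare the $I$-adic filtration with the integral closure filtration $\{\overline{I^{n}M'}\}$; the difference of the two is controlled by the conductor, i.e. by $\ell_{R'}(\overline{R'}/R')$. Concretely, using that $\overline{I^{n+1}}M' \subseteq \overline{I^{n+1}M'}$ and that multiplication by the conductor pushes the integral closure $\overline{R'}$-module structure back into $R'$, one estimates $\ell_{R'}(\overline{I^{n+1}M'}/I^{n+1}M') \leq \ell_{R'}(\overline{R'}/R')\,\mu_{R'}(M')$ uniformly in $n$ (the normal Hilbert coefficient $\overline{e_1}(I,M')$ being $\geq 0$ and $e_1 = \overline{e_1} - (\text{the stable value of this length difference})$, roughly speaking). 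The main obstacle I anticipate is exactly this last step: making the comparison between $e_1(I,M')$ and the normalized coefficient $\overline{e_1}(I,M')$ precise, and showing the correction term is bounded by $\ell_{R'}(\overline{R'}/R')\mu_{R'}(M')$ independently of $I$ --- this requires care with how the conductor interacts with the module generators and with Rees-algebra finiteness (finiteness of $\overline{R'}$ over $R'$, i.e. analytic unramifiedness is what one would ultimately need, consistent with Theorem \ref{KT theorem}). I would model this computation on \cite[Theorem 18]{tony} as the paper indicates, which presumably gives exactly such a bound on $e_1$ of a module in terms of the conductor length and the minimal number of generators.
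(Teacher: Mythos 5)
Your part \eqref{proposition:InfOfSet} is essentially the paper's argument: the relation $e_1(I,M)=e_1(I,M')-\ell_R(\h^0_\m(M))$ (the paper's \eqref{equation:RelHilbCoeff}, a special case of \eqref{equation:ei-M-&-M'}), nonnegativity of $e_1(I,M')$ for the one-dimensional Cohen--Macaulay module $M'$ (Northcott/Fillmore), and a principal parameter ideal $Q=(x)$, for which $e_1(Q,M')=0$, to show the bound is attained. Your momentary sign worry resolves exactly as you say, and this half is correct.

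In part \eqref{proposition:SupOfSet} there is a genuine gap, and it is precisely the step you flagged. After reducing to bounding $e_1(IR',M')$ for the maximal Cohen--Macaulay $R'$-module $M'$, your mechanism via the normal filtration does not close: writing $e_1(I,M')=\overline{e_1}(I,M')-\lim_n\ell_R\bigl(\overline{I^{n+1}M'}/I^{n+1}M'\bigr)$, the facts that $\overline{e_1}\ge 0$ and that the correction term is bounded by $\ell_{R'}(\overline{R'}/R')\mu_{R'}(M')$ give no \emph{upper} bound on $e_1$; for that you would need an upper bound on $\overline{e_1}(I,M')$ itself, which is the original problem over again. (Also, the worry about analytic unramifiedness is unnecessary: if $\ell_{R'}(\overline{R'}/R')=\infty$ the asserted inequality is vacuous, so no finiteness of $\overline{R'}$ is needed.) The paper instead stays with $e_1$ throughout: from the exact sequence $0\to \syz_1^{R'}(M')\to (R')^{\mu_{R'}(M')}\to M'\to 0$, with $\syz_1^{R'}(M')$ maximal Cohen--Macaulay, Puthenpurakal's bound \cite[Proposition 17]{tony} yields $e_1(IR',M')\le e_1(IR',M')+e_1(IR',\syz_1^{R'}(M'))\le e_1(IR',R')\,\mu_{R'}(M')$, and then $e_1(IR',R')\le \ell_{R'}(\overline{R'}/R')$ by \cite[Theorem 1.2]{KT}. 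If you prefer your conductor-style idea, it can be made to work, but via a minimal reduction rather than $\overline{e_1}$: for $(x)$ a minimal reduction of $IR'$ (after passing to an infinite residue field), $e_1(IR',M')=\ell_R\bigl(I^{n+1}M'/x^{n+1}M'\bigr)$ for $n\gg 0$, and $I^{n+1}\subseteq x^{n+1}\overline{R'}\cap R'$ embeds this module into $\overline{R'}M'/M'$, whose length is at most $\ell_{R'}(\overline{R'}/R')\mu_{R'}(M')$; this is essentially the computation the paper carries out later in Theorem \ref{theorem:DescrofSet}. As written, though, the decisive inequality $e_1(IR',M')\le \ell_{R'}(\overline{R'}/R')\mu_{R'}(M')$ is asserted rather than proved, and the route you sketch for it points the wrong way.
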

 \begin{proof}
\eqref{proposition:InfOfSet}: Let $c=\inf \Delta_R(M).$ 
 By \eqref{equation:ei-M-&-M'}, for every $\m$-primary ideal $I$ in $R,$
\begin{equation}
 \label{equation:RelHilbCoeff}
e_1(I,M) = e_1(I,M^\prime)-\ell_R(\h_\m^0(M)). 
 \end{equation}
Since $M^\prime$ is Cohen-Macaulay, $e_1(I,M^\prime) \geq 0$ by Northcott's inequality for modules (see \cite[p.~218]{F}). Thus $ e_1(I,M) \geq -\ell_R(\h_\m^0(M))$ for every $\m$-primary ideal $I$ in $R$ which implies that $c \geq -\ell_R(\h_\m^0(M)).$ Let $Q=(x)$ be a parameter ideal for $M.$ Then, by \eqref{equation:RelHilbCoeff}, 
 $e_1(Q,M)=-\ell_R(\h_\m^0(M)).$ Hence $c=-\ell_R(\h_\m^0(M)).$\\

\eqref{proposition:SupOfSet}: Let $C=\sup \Delta_R(M).$ Note that $M^\prime$ is a maximal Cohen-Macaulay $R^\prime$-module. Hence, for every $\m$-primary ideal $I$ of $R,$ we have
\begin{align*}
 e_1(I,M)&\leq e_1(I,M^\prime), \hspace*{1.6in} \mbox{ (by \eqref{equation:RelHilbCoeff})}\\
 &=e_1(IR^\prime,M^\prime)\\
 &\leq e_1(IR^\prime,M^\prime)+e_1(IR^\prime,\syz_1^{R^\prime}(M^\prime)) \mbox{ (as }\syz_1^{R^\prime}(M^\prime) \mbox{ is a Cohen-Macaulay $R^\prime$-module)}\\
 & \leq e_1(IR^\prime,R^\prime)\mu_{R^\prime}(M^\prime) \hspace*{1in} \mbox{ (by \cite[Proposition 17]{tony})}\\
& \leq \ell_{R^\prime}(\overline{R^\prime}/R^\prime)\mu_{R^\prime}(M^\prime) \hspace*{1in}\mbox{ (by \cite[Theorem 1.2]{KT})}.
\end{align*}
Hence $C \leq \ell_{R^\prime}(\overline{R^\prime}/R^\prime)\mu_{R^\prime}(M^\prime).$
\end{proof}

In order to obtain an upper bound on the set $\Delta_R(M)$, the ring $R$ having dimension one in Proposition \ref{proposition:SupAndInfOfSet} is not a restrictive condition as we may pass to $R/\ann_R(M)$, if needed, and assume that $\dim R=1.$

\begin{proposition}
\label{proposition:relatingHilbCoeff}
Let $(R, \m)$ be a Noetherian local ring and $M$ a Cohen-Macaulay $R$-module of dimension one.  
For nonzero modules $N$ and $C,$ consider the exact sequence 
\begin{equation}
\label{givenExactSequence}
 0 \to N \to M \to C \to 0. 
\end{equation}
For an $\m$-primary ideal $I$ in $R,$ the following statements hold true.
\begin{enumerate}
\item \label{proposition:relatingHilbCoeffDim0} 
If $\dim ~C=0,$ then $e_1(I,M) \geq e_1(I,N)-\ell_R(C).$
\item  \label{proposition:relatingHilbCoeffDim1} 
If $\dim ~C=1,$ then $e_1(I,M) \geq e_1(I,N)+e_1(I,C)\geq e_1(I,N)-\ell_R(\h_\m^0(C)).$
\end{enumerate}
\end{proposition}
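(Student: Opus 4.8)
The plan is to derive both inequalities from the single exact sequence \eqref{givenExactSequence} by reducing it modulo $I^{n+1}M$ and comparing Hilbert--Samuel polynomials with respect to $I$. First I would record a structural observation: since $M$ is Cohen--Macaulay of dimension one we have $\depth M = 1$, so $\m \notin \ass_R M$ and hence $\m \notin \ass_R N$; as $N \neq 0$ this forces $\dim N = 1$ (in fact $N$ is Cohen--Macaulay of dimension one). Thus in case \eqref{proposition:relatingHilbCoeffDim1} all three modules $N, M, C$ are one-dimensional, while in case \eqref{proposition:relatingHilbCoeffDim0} we have $\dim N = \dim M = 1$ and $C$ of finite length.

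Next I would use, for each $n$, the exact sequence
$$0 \to N/(N \cap I^{n+1}M) \to M/I^{n+1}M \to C/I^{n+1}C \to 0$$
obtained by reducing \eqref{givenExactSequence} modulo $I^{n+1}M$, which yields
$$\ell_R(M/I^{n+1}M) = \ell_R\big(N/(N \cap I^{n+1}M)\big) + \ell_R(C/I^{n+1}C).$$
The one elementary estimate needed is the inclusion $I^{n+1}N \subseteq N \cap I^{n+1}M$, which gives $\ell_R\big(N/(N \cap I^{n+1}M)\big) \le \ell_R(N/I^{n+1}N)$ for every $n$. I would then combine this with additivity of Hilbert--Samuel multiplicity along \eqref{givenExactSequence} — giving $e_0(I,M) = e_0(I,N)$ in case \eqref{proposition:relatingHilbCoeffDim0} and $e_0(I,M) = e_0(I,N) + e_0(I,C)$ in case \eqref{proposition:relatingHilbCoeffDim1} — together with the fact that for a one-dimensional module $X$ one has $\ell_R(X/I^{n+1}X) = e_0(I,X)(n+1) - e_1(I,X)$ for $n \gg 0$. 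Substituting the estimate into the length equality and cancelling the $(n+1)$-terms leaves exactly $e_1(I,M) \ge e_1(I,N) - \ell_R(C)$ in case \eqref{proposition:relatingHilbCoeffDim0} (using $\ell_R(C/I^{n+1}C) = \ell_R(C)$ for $n \gg 0$) and $e_1(I,M) \ge e_1(I,N) + e_1(I,C)$ in case \eqref{proposition:relatingHilbCoeffDim1}.

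For the final inequality $e_1(I,C) \ge -\ell_R(\h^0_\m(C))$ in case \eqref{proposition:relatingHilbCoeffDim1}, I would pass to $C' := C/\h^0_\m(C)$: by \eqref{equation:ei-M-&-M'} (with $r = 1$) we have $e_1(I,C) = e_1(I,C') - \ell_R(\h^0_\m(C))$, and $C'$ is Cohen--Macaulay of dimension one, so $e_1(I,C') \ge 0$ by Northcott's inequality for modules (see \cite[p.~218]{F}); equivalently this is Proposition \ref{proposition:SupAndInfOfSet}\eqref{proposition:InfOfSet} applied to $C$ over $R/\ann_R C$.

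I do not expect a real obstacle here: the argument is the standard ``reduce modulo $I^{n+1}M$ and compare polynomials'' manipulation. The only points that need care are the dimension bookkeeping — making sure $\dim N = 1$ so that the one-dimensional polynomial formula applies to $N$ — and stating multiplicity additivity in the form appropriate to each case (equal dimensions versus a finite-length cokernel).
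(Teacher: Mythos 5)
Your proof is correct and follows essentially the same route as the paper: reduce the exact sequence modulo $I^{n+1}$, compare Hilbert--Samuel polynomials using additivity of $e_0$, and finish with $e_1(I,C)\geq -\ell_R(\h^0_\m(C))$ via Northcott (i.e.\ Proposition \ref{proposition:SupAndInfOfSet}\eqref{proposition:InfOfSet}). The only cosmetic difference is that you discard the error term $(N\cap I^{n+1}M)/I^{n+1}N$ immediately via the inclusion $I^{n+1}N\subseteq N\cap I^{n+1}M$, whereas the paper keeps this kernel $K_{I,n+1}$, shows its length is an eventually constant nonnegative integer $b_I$, and then drops it; both steps are harmless and the conclusions agree.
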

\begin{proof}
Tensoring \eqref{givenExactSequence} with $R/I^{n+1},$ we get an exact sequence
\begin{equation*}
  0 \to K_{I,n+1}\to \frac{N}{I^{n+1}N} \to \frac{M}{I^{n+1}M} \to \frac{C}{I^{n+1}C} \to 0, 
\end{equation*}
where $K_{I,n+1}$ (depends on $I$ and $n$) is some $R$-module of finite length. 
Therefore
\begin{equation}
\label{equation:alternatingSum}
\ell_R\left(K_{I,n+1}\right)-\ell_R\left(\frac{N}{I^{n+1}N}\right)+ \ell_R\left(\frac{M}{I^{n+1}M}\right)-\ell_R\left(\frac{C}{I^{n+1}C}\right)=0. 
\end{equation}
This implies that $\ell_R(K_{I,n+1})$ is a polynomial, for $n \gg 0,$ of degree at most one. Let 
$\ell_R(K_{I,n+1})=a_I(n+1)+b_I$, for $n \gg 0,$ where $a_I$ and $b_I$ are some integers. Since $M$ is Cohen-Macaulay, $N$ is a Cohen-Macaulay module of dimension one. Hence, by 
using \cite[Corollary 4.7.7]{bruns-herzog}, we get $a_I=0$ and hence $\ell_R(K_{I,n+1})=b_I$ for $n \gg 0.$

\ref{proposition:relatingHilbCoeff}\eqref{proposition:relatingHilbCoeffDim0}:
Suppose that $\dim ~C=0.$ Then $I^n C=0$ for $n \gg 0.$ Hence from \eqref{equation:alternatingSum}, we get that 
\begin{align*}
 e_1(I,M)=b_I+e_1(I,N)-\ell_R(C) \geq e_1(I,N)-\ell_R(C).
\end{align*}

\ref{proposition:relatingHilbCoeff}\eqref{proposition:relatingHilbCoeffDim1}: Again using \eqref{equation:alternatingSum}, we get 
\begin{align*}
e_1(I,M)&=b_I+e_1(I,N)+e_1(I,C) \\
&\geq e_1(I,N)+e_1(I,C) \\
&\geq e_1(I,N)-\ell_R(\h_\m^0(C)) \hspace*{1.5in} \mbox{(by Proposition \ref{proposition:SupAndInfOfSet}\eqref{proposition:InfOfSet})}.
\end{align*}
\end{proof}

In the following lemma we give a necessary condition for the finiteness of the set $\Delta_R(M)$ if $M$ 
is a cyclic module of dimension one. 

\begin{lemma}
\label{lemma:cyclic modules}
 Let $(R, \m)$ be a Noetherian local ring and $M=Rx$ a Cohen-Macaulay $R$-module of dimension one. Suppose $\Delta_R(M)$ is finite. Then $R/\ann_R(M)$ is analytically unramified.
\end{lemma}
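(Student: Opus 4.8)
The statement to prove: if $M = Rx$ is a Cohen--Macaulay $R$-module of dimension one and $\Delta_R(M)$ is finite, then $R/\operatorname{ann}_R(M)$ is analytically unramified. Since $M = Rx$ is cyclic, $M \cong R/\operatorname{ann}_R(M)$ as $R$-modules. Set $A = R/\operatorname{ann}_R(M)$; then $A$ is a one-dimensional Cohen--Macaulay (hence unmixed, with $\operatorname{depth} A = 1$) Noetherian local ring, and the Hilbert coefficients $e_1(I, M)$ for $\mathfrak{m}$-primary $I$ coincide with $e_1(IA, A)$. Thus $\Delta_R(M) = \Delta_A(A)$ (as $I$ ranges over $\mathfrak{m}$-primary ideals of $R$, $IA$ ranges over $\mathfrak{m}_A$-primary ideals of $A$, and conversely every $\mathfrak{m}_A$-primary ideal is of this form). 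So the hypothesis gives that $\Delta_A(A)$ is a finite set.

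The plan is then to invoke Theorem~\ref{KT theorem} (the result of \cite{KT}) applied to the ring $A$: since $\Delta_A(A)$ is finite and $\dim A = 1 > 0$, condition (1) of that theorem holds, so condition (2) holds, namely $\dim A = 1$ and $A/\h^0_{\mathfrak{m}_A}(A)$ is analytically unramified. But $A$ is Cohen--Macaulay of dimension one, so $\operatorname{depth} A = 1$ and hence $\h^0_{\mathfrak{m}_A}(A) = 0$; therefore $A = A/\h^0_{\mathfrak{m}_A}(A)$ is analytically unramified, which is exactly the assertion that $R/\operatorname{ann}_R(M)$ is analytically unramified.

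I should be a little careful about one bookkeeping point: Theorem~\ref{KT theorem} as quoted is stated for a Noetherian local ring of dimension $d > 0$, so I need $\dim A \geq 1$, which holds since $\dim_R M = 1$ and $\dim A = \dim_R M$. I also need to make sure the passage $\Delta_R(M) \leftrightarrow \Delta_A(A)$ is legitimate: writing $M \cong A$, for any $\mathfrak{m}$-primary ideal $I$ of $R$ we have $I^{n+1}M = (IA)^{n+1} \cdot A$ inside $A$, so $\ell_R(M/I^{n+1}M) = \ell_A(A/(IA)^{n+1})$, and the two associated Hilbert polynomials agree coefficient by coefficient; in particular $e_1(I,M) = e_1(IA, A)$. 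Every $\mathfrak{m}_A$-primary ideal $J$ of $A$ pulls back to an $\mathfrak{m}$-primary ideal $I$ of $R$ with $IA = J$, so indeed $\Delta_R(M) = \Delta_A(A)$ as subsets of $\mathbb{Z}$.

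The argument is essentially immediate once this reduction is in place, so there is no real obstacle here — the "hard part" (a genuine criterion for finiteness of $\Delta$ of a one-dimensional ring in terms of analytic unramifiedness) is exactly what is being imported from \cite{KT} via Theorem~\ref{KT theorem}; the only thing to watch is that the cyclic hypothesis is what lets us replace the module $M$ by a ring $A = R/\operatorname{ann}_R M$ and that Cohen--Macaulayness of $M$ kills the $\h^0_{\mathfrak{m}}$ correction term. This lemma is presumably the base case of the induction on the number of generators of $M$ that will be used to prove Theorem~\ref{theorem:finHilCoeffOfNoethModules}.
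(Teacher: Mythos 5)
Your proposal is correct and follows essentially the same route as the paper: pass to the ring $A=R/\ann_R(M)\cong M$, identify $\Delta_R(M)$ with $\Delta_A(A)$, and invoke Theorem~\ref{KT theorem}. Your explicit remark that Cohen--Macaulayness of $M$ forces $\h^0_{\m_A}(A)=0$, so that the conclusion of Theorem~\ref{KT theorem} applies to $A$ itself, is exactly the (implicit) step in the paper's argument.
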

\begin{proof}
 Note that $M \cong R/\ann_R(x).$ Let $B:=R/\ann_R(x).$ Since $\ell_R(B/I^n B)=\ell_{B}(B/I^nB)$ for any $\m$-primary ideal $I$ in $R,$ $e_1(I,B)=e_1(IB,B).$ Since
 every $\m B$-primary ideal in $B$ is of the form $IB$ for some $\m$-primary ideal $I$ in $R,$ finiteness of the set $\Delta_R(M)$ implies that the set $\Delta_{B}(B)$ is finite. Therefore, by Theorem \ref{KT theorem}, $B$ is analytically unramified. 
\end{proof}

Now we give an equivalent criterion for the finiteness of the set $\Delta_R(M).$

\begin{theorem}
 \label{theorem:finHilCoeffOfNoethModules}
 Let $(R, \m)$ be a Noetherian local ring and $M$ a finitely generated $R$-module of dimension $r>0.$ 
 Let $R^\prime=R/\h_\m^0(R)$ and $M^\prime=M/\h_{\m}^0(M).$ Then the following conditions are equivalent:
\begin{enumerate}
\item \label{theorem:finOfSetNoeth} $\Delta_R(M)$ is a finite set;
\item \label{theorem:AuIndim1Noeth} $r=1$ and $R^\prime/\ann_{R^\prime}(M^\prime)$ is analytically unramified.
\end{enumerate}
 \end{theorem}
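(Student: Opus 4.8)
The plan is to establish the two implications separately. Throughout, observe that $\h^0_\m(R)M'\subseteq \h^0_\m(M')=0$, so $\h^0_\m(R)\subseteq \ann_R(M')$ and hence $R'/\ann_{R'}(M')=R/\ann_R(M')=:B$; moreover, by \eqref{equation:RelHilbCoeff}, $\Delta_R(M)$ is finite if and only if $\Delta_R(M')$ is finite, and $M'$ has positive depth. Thus both directions reduce to a statement about the Cohen--Macaulay module $M'$.

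For \eqref{theorem:AuIndim1Noeth}$\Rightarrow$\eqref{theorem:finOfSetNoeth}: assume $r=1$ and $B$ analytically unramified. Then $M'$ is a faithful Cohen--Macaulay $B$-module with $\dim_B M'=1=\dim B$, $B$ is reduced (it embeds in its reduced completion), so $\h^0_\m(B)=0$, and $\ell_B(\overline{B}/B)<\infty$ since $\overline B$ is module-finite over the one-dimensional analytically unramified ring $B$. Because $R/\m=B/\m B$, for every $\m$-primary ideal $I$ of $R$ we have $\ell_R(M'/I^nM')=\ell_B(M'/(IB)^nM')$, and every $\m B$-primary ideal of $B$ is of the form $IB$; hence $\Delta_R(M')=\Delta_B(M')$. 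Proposition~\ref{proposition:SupAndInfOfSet} applied to the pair $(B,M')$ now gives $0=\inf\Delta_B(M')$ and $\sup\Delta_B(M')\le \ell_B(\overline{B}/B)\,\mu_B(M')<\infty$, so $\Delta_B(M')$, and therefore $\Delta_R(M)$, is finite.

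For \eqref{theorem:finOfSetNoeth}$\Rightarrow$\eqref{theorem:AuIndim1Noeth}: by Lemma~\ref{lemma:finite implies d=1}, $r=1$, so $M'$ is Cohen--Macaulay of dimension one with $\Delta_R(M')$ finite. If $M'$ is cyclic we are done by Lemma~\ref{lemma:cyclic modules}, so assume $n:=\mu_R(M')\ge 2$ and fix a minimal generating set $x_1,\dots,x_n$. For each $j$ the submodule $Rx_j$ is nonzero, hence Cohen--Macaulay of dimension one (its associated primes lie among those of $M'$), and $C_j:=M'/Rx_j$ is nonzero of dimension $\le 1$. Applying Proposition~\ref{proposition:relatingHilbCoeff} to $0\to Rx_j\to M'\to C_j\to 0$ (according as $\dim C_j$ is $0$ or $1$), together with Northcott's inequality $e_1(I,Rx_j)\ge 0$ for the one-dimensional Cohen--Macaulay module $Rx_j$, gives $0\le e_1(I,Rx_j)\le e_1(I,M')+c_j$ for all $\m$-primary $I$, where $c_j$ is $\ell_R(C_j)$ or $\ell_R(\h^0_\m(C_j))$ and is independent of $I$. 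Hence $\Delta_R(Rx_j)$ is finite, and since $Rx_j\cong R/\ann_R(x_j)=:B_j$, Lemma~\ref{lemma:cyclic modules} shows each $B_j$ is analytically unramified. Since $\ann_R(M')=\bigcap_{j=1}^n\ann_R(x_j)$, the diagonal map realizes $B$ as a subring of the module-finite extension $S:=\prod_{j=1}^n B_j$; applying $\m$-adic completion (exact on finitely generated $B$-modules) to $0\to B\to S\to S/B\to 0$ yields an injection $\widehat B\hookrightarrow \widehat S\cong \prod_{j=1}^n \widehat{B_j}$, and the latter ring is reduced. Therefore $\widehat B$ is reduced, i.e.\ $B=R'/\ann_{R'}(M')$ is analytically unramified.

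I expect the main obstacle to be the final step of \eqref{theorem:finOfSetNoeth}$\Rightarrow$\eqref{theorem:AuIndim1Noeth}: passing from ``each cyclic module $R/\ann_R(x_j)$ is analytically unramified'' to ``$B=R/\ann_R(M')$ is analytically unramified''. A single cyclic sub- or quotient module of $M'$ recovers $B$ only up to its nilradical, so the argument must use the whole generating set and the product embedding above; verifying that this embedding is module-finite and behaves well under completion is where care is needed. By comparison, the Hilbert-coefficient estimates feeding into Lemma~\ref{lemma:cyclic modules} via Proposition~\ref{proposition:relatingHilbCoeff} are routine once the short exact sequences are in place, and the reduction to $r=1$ is already supplied by Lemma~\ref{lemma:finite implies d=1}.
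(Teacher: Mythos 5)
Your proposal is correct and follows essentially the same route as the paper: reduce to $r=1$ via Lemma \ref{lemma:finite implies d=1}, pass to the Cohen--Macaulay module $M^\prime$ via \eqref{equation:RelHilbCoeff}, bound $e_1(I,Rx_j)$ for cyclic submodules using Proposition \ref{proposition:relatingHilbCoeff} and Northcott to invoke Lemma \ref{lemma:cyclic modules}, and conclude with Proposition \ref{proposition:SupAndInfOfSet} for the converse. Your final step (completing the diagonal embedding $B\hookrightarrow\prod_j B_j$) is just a repackaging of the paper's argument that $\widehat{R^\prime}/\bigcap_i I_i\widehat{R^\prime}$ is reduced using flatness of completion, so there is no substantive difference.
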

\begin{proof} 
\eqref{theorem:finOfSetNoeth} $\Rightarrow$ \eqref{theorem:AuIndim1Noeth}: Since $\Delta_R(M)$ is finite, by Lemma \ref{lemma:finite implies d=1}, $r=1.$ 
Thus $M^\prime$ is a Cohen-Macaulay $R^\prime$-module of dimension one. From \eqref{equation:RelHilbCoeff} it follows that $|\Delta_R(M)|=|\Delta_R(M^\prime)|.$ This implies that $\Delta_R(M^\prime)$ is a finite set. 
Since $e_1(I,M^\prime)=e_1(IR^\prime,M^\prime),$ we get that $\Delta_{R^\prime}(M^\prime)$ is a finite set. 
 Let $M^\prime=R^\prime x_1+R^\prime x_2+\cdots+R^\prime x_m,$ where $0 \neq R^\prime x_i \subseteq M^\prime $ is a $R^\prime$-submodule of $M^\prime.$  
 Set $N_i:=R^\prime x_i.$ 
 Since $M^\prime$ is Cohen-Macaulay, $N_i$ is a Cohen-Macaulay $R^\prime$-module of dimension one. Hence for every $\m$-primary ideal $I$ in $R,$ $e_1(IR^\prime,N_i) \geq 0$ and by Proposition \ref{proposition:relatingHilbCoeff},  
 \begin{equation*}
   e_1(IR^\prime,N_i) \leq e_1(IR^\prime,M^\prime)+ c_i, 
 \end{equation*}
 for some nonnegative integer $c_i$ which is independent of $I.$ Thus finiteness of the set $\Delta_{R^\prime}(M^\prime)$ implies that the set 
$\Delta_{R^\prime}(N_i)$ is finite for every $i.$ Hence, by Lemma \ref{lemma:cyclic modules}, $R^\prime/\ann_{R^\prime}(R^\prime x_i)$ is analytically unramified 
for each $i.$ Let $I_i=\ann_{R^\prime}(R^\prime x_i).$ Since $\widehat{R^\prime}/I_i\widehat{R^\prime}$ is reduced for each $i$, 
$\widehat{R^\prime}/\left(\mathop\bigcap\limits_{i=1}^m I_i\widehat{R^\prime}\right) $ is reduced. 
Also, as $\widehat{R^\prime}$ is a flat $R^\prime$-module,
\begin{equation*}
 \widehat{\ann_{R^\prime}(M^\prime)}=\ann_{R^\prime}(M^\prime) \widehat{R^\prime}=\left(\bigcap_{i=1}^m I_i\right) \widehat{R^\prime} = \bigcap_{i=1}^m I_i\widehat{R^\prime}. 
\end{equation*}
Hence 
$\widehat{R^\prime}/\widehat{\ann_{R^\prime} {M^\prime}} \cong \widehat{\left(\frac{R^\prime}{\ann_{R^\prime} M^\prime}\right)}$ is reduced. 
Thus $R^\prime/\ann_{R^\prime}(M^\prime)$ is analytically unramified. \\

\eqref{theorem:AuIndim1Noeth} $\Rightarrow$ \eqref{theorem:finOfSetNoeth}: 
Since $\dim R^\prime/\ann_{R^\prime}(M^\prime)=\dim M^\prime=1$ and $R^\prime/\ann_{R^\prime}(M^\prime)$ is analytically unramified,  
by Proposition \ref{proposition:SupAndInfOfSet}, $\Delta_{\frac{R^\prime}{\ann_{R^\prime}(M^\prime)}}(M^\prime)$ is finite. This implies that $\Delta_{R^\prime}(M^\prime)$ is finite. Hence by \eqref{equation:RelHilbCoeff},  $\Delta_R(M)$ is a finite set.
\end{proof}

As a consequence we give an equivalent criterion for the finiteness of the set $\Delta^K(R).$

\begin{theorem}\label{theorem:finiteness-of-g1}
 Let $(R,\m)$ be a Noetherian local ring of dimension $d>0$ and $K$ an $\m$-primary ideal of $R.$ Then the following conditions are equivalent:
 \begin{enumerate}
\item \label{theorem:finiteness-of-g1-a} $\Delta^K(R)$ is a finite set;
\item \label{theorem:finiteness-of-g1-b} $d=1$ and $R/\h_\m^0(R)$ is analytically unramified.
\end{enumerate}
\end{theorem}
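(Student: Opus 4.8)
The plan is to reduce the statement to Theorem \ref{theorem:finHilCoeffOfNoethModules} by regarding $K$ as a finitely generated $R$-module. First I would note that the identity $\ell_R(R/KI^n)=\ell_R(R/K)+\ell_R(K/I^nK)$ holds for \emph{every} $\m$-primary ideal $I$, not merely for parameter ideals, so that $P_K(I,n)=\ell_R(R/K)+P(I,n,K)$ as polynomials in $n$. Comparing coefficients exactly as in \eqref{eqn-g1} gives $g_1^K(I)=e_1(I,K)$ when $d\geq 2$ and $g_1^K(I)=e_1(I,K)-\ell_R(R/K)$ when $d=1$ (the constant $\ell_R(R/K)$ only perturbs the top coefficient $g_d^K(I)$). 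In either case $g_1^K(I)$ differs from $e_1(I,K)$ by a constant independent of $I$, hence $\Delta^K(R)$ is finite if and only if $\Delta_R(K)=\{e_1(I,K)\mid I\text{ is an }\m\text{-primary ideal of }R\}$ is finite.

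Next I would apply Theorem \ref{theorem:finHilCoeffOfNoethModules} with $M=K$: setting $R'=R/\h^0_\m(R)$ and $K'=K/\h^0_\m(K)$, the set $\Delta_R(K)$ is finite if and only if $\dim K=1$ and $R'/\ann_{R'}(K')$ is analytically unramified. Since $K$ is $\m$-primary we have $\dim K=\dim R=d$, so the dimension condition is precisely $d=1$.

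It then remains to identify $R'/\ann_{R'}(K')$ with $R/\h^0_\m(R)$, i.e.\ to check that $K'$ is a faithful $R'$-module. I would observe that the composite $K\hookrightarrow R\twoheadrightarrow R'$ has kernel $K\cap\h^0_\m(R)=\h^0_\m(K)$, so $K'\cong\bar K$, the image of $K$ in $R'$; and if $a\in R'$ annihilates $\bar K$, lifting to $R$ and choosing $l$ with $\m^l\subseteq K$ gives $a\m^l\subseteq aK\subseteq\h^0_\m(R)$, whence $a$ is killed by a power of $\m$, i.e.\ $a=0$ in $R'$. Thus $\ann_{R'}(K')=0$ and $R'/\ann_{R'}(K')=R/\h^0_\m(R)$, which yields the asserted equivalence.

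I do not expect a genuine obstacle: the substantive work has already been done in Theorem \ref{theorem:finHilCoeffOfNoethModules}, and what remains is bookkeeping. The only points needing a little care are the coefficient comparison separating the cases $d=1$ and $d\geq 2$, and the (short) verification that $K'$ is faithful over $R/\h^0_\m(R)$; both are routine.
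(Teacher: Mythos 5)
Your proposal is correct and follows essentially the same route as the paper: the paper's proof is exactly the reduction $|\Delta^K(R)|=|\Delta_R(K)|$ via \eqref{eqn-g1} (which, as you note, holds for all $\m$-primary ideals, with the constant $\ell_R(R/K)$ shift only affecting the top coefficient), followed by an application of Theorem \ref{theorem:finHilCoeffOfNoethModules} with $M=K$ together with the observation that $\ann_{R'}(KR')=0$. Your explicit verification of the faithfulness of $KR'$ over $R'=R/\h^0_\m(R)$ and the case split $d=1$ versus $d\geq 2$ are details the paper leaves implicit, but the argument is the same.
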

\begin{proof}
From \eqref{eqn-g1}, $|\Delta^K(R)|=|\Delta_R(K)|.$ Let $R^\prime=R/\h_\m^0(R).$ Since 
$\ann_{R^\prime}(KR^\prime)=0,$ using Theorem \ref{theorem:finHilCoeffOfNoethModules} we get the result.
\end{proof}

\begin{remark}
 Suppose $d=1$ and $R/\h_\m^0(R)$ is analytically unramified. Then by Theorems \ref{KT theorem} and  \ref{theorem:finiteness-of-g1}, the sets $\Delta_R(R)$ and $\Delta^K(R),$ respectively, are 
 finite. Hence, from \eqref{eqn-fi-gi-ei}, the set $\{f_0^K(I)~|~I \mbox{ is an $\m$-primary ideal of } R\}$ is finite.
\end{remark}

In \cite[Corollary 2.4]{KT}, authors gave a description of the set $\Delta_R(R).$ In what follows we will give a description of the set $\Delta^{K}(R).$ 
Recall that a {\it reduction} of an ideal $I$ is an ideal $J\subseteq I$ such that $I^{n+1}=JI^n$ for some $n\geq 0.$ A {\it minimal reduction} of $I$ is a reduction of $I$ which is minimal with respect to inclusion. 
For a minimal reduction $J$ of $I$, {\it reduction number} of $I$ with respect to $J$, denoted by $r_J(I)$, is the least non-negative integer $n$ such that $I^{n+1}=JI^n$.
\begin{theorem}
\label{theorem:DescrofSet}
Let $(R,\m)$ be a Cohen-Macaulay local ring of dimension one with infinite residue field. 
\begin{enumerate}
 \item \label{theorem:DescrofSetforMod}
 For a maximal Cohen-Macaulay module $M$ 
 \begin{equation*}
 \Delta_R(M) \subseteq \{\ell_R(N/M):M \subseteq N \subseteq S^{-1}M, \mbox{ $N$ is a finitely generated $R$-module}\},
\end{equation*}
where $ S=\{x \in R: x\mbox{ is $R$-regular}\}.$
\item \label{theorem:DescrofSetforIdeal} For an $\m$-primary ideal $K$ in $R,$ 
\begin{equation}
\label{equation:DescrofSetforIdeal}
 \Delta^K(R)= \{\ell_R(KB/K)-\ell_R(R/K): R\subseteq B \subseteq \ov{R}, \mbox{ $B$ is a finitely generated $R$-module}\}.
 \end{equation} 
Further, $\sup \Delta^K(R)=\ell_R(K\ov{R}/K)-\ell_R(R/K).$
 \end{enumerate}
\end{theorem}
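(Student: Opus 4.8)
The plan is to derive both parts from one length‑theoretic description of $e_1(I,M)$ for a maximal Cohen--Macaulay module $M$ of dimension one, obtained via a principal minimal reduction. For Part~(1): since $R/\m$ is infinite and $\dim R=1$, a given $\m$-primary ideal $I$ has a principal minimal reduction $J=(x)$, with $x$ a nonzerodivisor on $M$ (it is a parameter and $M$ is maximal Cohen--Macaulay). I would then work inside $S^{-1}M$ with the chain of finitely generated $R$-submodules
\[
M=M_0\subseteq M_1\subseteq M_2\subseteq\cdots,\qquad M_n:=x^{-n}I^nM ,
\]
which is ascending because $xI^nM\subseteq I^{n+1}M$ and stabilises for $n\geq r_J(I)$ because $I^{n+1}=xI^n$ there; call the stable value $N$, so $M\subseteq N\subseteq S^{-1}M$ and $N$ is finitely generated. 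Using that $x$ is $M$-regular and $\ell_R(M/xM)=e_0((x),M)=e_0(I,M)$, one obtains
\[
\ell_R(M_{n+1}/M_n)=\ell_R(I^{n+1}M/xI^nM)=e_0(I,M)+\ell_R(M/I^nM)-\ell_R(M/I^{n+1}M),
\]
and summing over $0\leq n\leq m-1$ telescopes to $\ell_R(N/M)=m\,e_0(I,M)-\ell_R(M/I^mM)$ for all $m\geq r_J(I)$; taking $m$ large enough that $\ell_R(M/I^mM)=e_0(I,M)m-e_1(I,M)$ gives $\ell_R(N/M)=e_1(I,M)$, which is the asserted inclusion.

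For Part~(2), \eqref{eqn-g1} (with $d=i=1$) gives $g_1^K(I)=e_1(I,K)-\ell_R(R/K)$, so I would apply Part~(1) with $M=K$: then $K$ is maximal Cohen--Macaulay and $S^{-1}K=\operatorname{Tot}(R)$, so $\overline R$ is the relevant ambient ring. For $M=K$ the module $N$ constructed above is $KB_I$ with $B_I:=\bigcup_{n\geq 0}x^{-n}I^n$, and $B_I$ is a finitely generated $R$-subalgebra of $\overline R$ containing $R$: it is a ring because $(x^{-r}I^r)^2=x^{-r}I^r$ for $r=r_J(I)$, it is finitely generated because it equals $x^{-r}I^r\cong I^r$, and it lies in $\overline R$ because $I^n\subseteq\overline{(x^n)}$. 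Hence $g_1^K(I)=\ell_R(KB_I/K)-\ell_R(R/K)$, giving one inclusion. For the reverse inclusion, given finitely generated $B$ with $R\subseteq B\subseteq\overline R$, I would pick a nonzerodivisor $x\in R$ with $xB\subseteq R$ and set $I:=xB$, an $\m$-primary ideal (it contains $(x)$); when $B$ is multiplicatively closed this gives $I^{n+1}=xI^n$ for $n\geq 1$, hence $B_I=B$ and $g_1^K(I)=\ell_R(KB/K)-\ell_R(R/K)$. For the last assertion, $KB\subseteq K\overline R$ for every admissible $B$ gives $\sup\Delta^K(R)\leq\ell_R(K\overline R/K)-\ell_R(R/K)$; if $\ell_R(K\overline R/K)<\infty$ then $\overline R$ is a finite $R$-module, and taking a nonzerodivisor $x$ in the conductor with $I=x\overline R$ gives $B_I=\overline R$, so equality holds; if $\ell_R(K\overline R/K)=\infty$ then $\overline R$ is not module-finite, so $R/\h^0_\m(R)$ is not analytically unramified and $\Delta^K(R)$ is infinite by Theorem~\ref{theorem:finiteness-of-g1}, whence (being bounded below by $-\ell_R(R/K)$, since $e_1(I,K)=\ell_R(KB_I/K)\geq 0$) it is unbounded above.

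The step I expect to be the main obstacle is the reverse inclusion of Part~(2) in the generality stated. The construction in Part~(1) attaches to each $I$ the module $KB_I$ with $B_I$ an $R$-\emph{subalgebra} of $\overline R$, and the easy choice $I=xB$ realises $\ell_R(KB/K)$ as some $e_1(I,K)$ only when $B$ is itself multiplicatively closed (otherwise it produces $\ell_R(K\,R[B]/K)$). To handle an arbitrary finitely generated module $B$ one must exhibit a subalgebra $C$ with $\ell_R(KC/K)=\ell_R(KB/K)$: the natural candidate $C=(KB:K)$ does satisfy $KC=KB$ but need not be a ring, while $R[B]$ is a ring but can strictly enlarge $KB$. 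Settling this reduction---equivalently, pinning down exactly which values $\ell_R(KB/K)$ occur as $e_1(I,K)$---is where the real work lies; the forward inclusions, the supremum bound, and the cases where $\overline R$ behaves well are comparatively routine once Part~(1) is in place.
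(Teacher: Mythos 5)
Your Part (1) and the forward half of Part (2) follow the paper's proof essentially verbatim: the paper also takes a principal minimal reduction $J=(x)\subseteq I$, forms $N=M[\frac{I}{x}]\subseteq S^{-1}M$, notes $N\cong I^nM$ for $n\geq r_J(I)$, and identifies $e_1(I,M)$ with $\ell_R(N/M)$ (its computation via $\ell_R(I^{n+1}M/J^{n+1}M)$ stabilising is your telescoping sum in different bookkeeping); for (2) it puts $B=R[\frac{I}{x}]$ so that $KB=N$, and it handles the supremum exactly as you do, using Theorem \ref{theorem:finiteness-of-g1} to reduce to the case where $\overline{R}$ is a finite $R$-module and then taking $B=\overline{R}$.

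The step you single out as the main obstacle is a genuine gap, and it is not resolved in the paper either. For the reverse inclusion the paper takes a nonzerodivisor $x$ with $xB\subseteq R$, sets $I=xB$, and asserts $I^2=xI$, whence $R[\frac{I}{x}]=\frac{I}{x}=B$. But $I^2=x^2B^2$ and $xI=x^2B$, so the asserted equality is literally $B^2=B$, i.e.\ that $B$ is closed under multiplication; no argument is given for a general finitely generated intermediate module, and it can fail: for $R=k[[t^3,t^4,t^5]]$, $B=R+Rt$, $x=t^3$ one gets $I=(t^3,t^4)$ and $t^8\in I^2\setminus xI$. For such $B$ the construction yields $e_1(I,K)=\ell_R(K\,R[B]/K)$, exactly the phenomenon you predicted. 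Thus what is actually established by the paper's argument (and by your proposal) is the equality of $\Delta_R(K)$ with the set of values $\ell_R(KB/K)$ as $B$ ranges over intermediate \emph{rings} that are finite $R$-modules, hence only the inclusion $\subseteq$ in \eqref{equation:DescrofSetforIdeal} as stated, together with the supremum formula (which is unaffected, since $\overline{R}$ is a ring). The reduction from arbitrary modules $B$ to subrings — showing every value $\ell_R(KB/K)$ is attained as some $e_1(I,K)$ — is the missing ingredient, not a trick you overlooked in the paper.
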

\begin{proof}
\eqref{theorem:DescrofSetforMod}: Let $I$ be an $\m$-primary ideal in $R.$ Let $J=(x) \subseteq I$ be a minimal reduction of $I.$ Since $R$ (resp. $M$) is 
Cohen-Macaulay, $x$ is $R$-(resp. $M$-)regular. 
 We set 
 \begin{equation*}
  \frac{I^n}{x^n}=\left\{\frac{a}{x^n}:a \in I^n\right\}\subseteq S^{-1}R.
 \end{equation*}
Let $s=r_J(I)$ and $N=M[\frac{I}{x}] \subseteq S^{-1}M.$ Then $M\subseteq N=\mathop\bigcup\limits_{n \geq 0}\frac{I^nM}{x^n}=\frac{I^nM}{x^n} \cong I^nM$ for $n \geq s.$ Thus $N$ is a finitely generated $R$-module. We claim that $e_1(I,M)=\ell_R(N/M).$ We have
\begin{align*}
 \ell_R\left(\frac{M}{I^{n+1}M}\right)&=\ell_R\left(\frac{M}{J^{n+1}M}\right)-\ell_R\left(\frac{I^{n+1}M}{J^{n+1}M}\right)\\
 &=e_0(I,M)(n+1)-\ell_R\left(\frac{I^{n+1}M}{J^{n+1}M}\right) \hspace*{0.8in} \mbox{ for }n \gg 0. 
\end{align*} 
This implies that $e_1(I,M)=\ell_R\left(\frac{I^{n+1}M}{J^{n+1}M}\right)$ for $n \gg 0.$ Since $\frac{I^{n+1}M}{J^{n+1}M} \cong \frac{N}{M} $ for $n \gg 0,$  $e_1(I,M)=\ell_R(N/M).$

\eqref{theorem:DescrofSetforIdeal}: 
Let $\Gamma(R):=\{\ell_R(KB/K): R\subseteq B \subseteq \ov{R}, \mbox{ $B$ is a finitely generated $R$-module}\}.$ First we show that $\Delta_R(K)=\Gamma(R).$ By part \eqref{theorem:DescrofSetforMod}, $e_1(I,K)=\ell_R(N/K)$, where $N=K[\frac{I}{x}]$ and $(x)$ is a minimal 
reduction of $I.$ Put $B=R[\frac{I}{x}]$. Let $s=r_{(x)}(I).$ Then $B=\frac{I^n}{x^n}\cong I^s$ for all $n\geq s$. Thus $B$ is a finitely generated
 $R$-module which implies that $B\subseteq \ov{R}.$ Also, $KB=K[\frac{I}{x}]=N.$ Hence  $e_1(I,K)=\ell_R(KB/K)\in \Gamma(R).$
 
 Now, let $R\subseteq B\subseteq \ov{R}$ and  $B$ is finitely generated
 $R$-module. Then there exists a nonzerodivisor $x\in R$ such $xB\subseteq R$. Let $I=xB$. Then $I$ is an $\m$-primary ideal in $R$ and $I^2=xI.$ Hence 
 $R[\frac{I}{x}]=\frac{I}{x}=B.$ A similar argument as above shows that $e_1(I,K)=\ell_R(KB/K).$ Hence $\Gamma(R) \subseteq \Delta_R(K).$ Thus $\Gamma(R) =\Delta_R(K).$
Therefore using \eqref{eqn-g1}, \eqref{equation:DescrofSetforIdeal} follows. 

Let $C:=\sup \Delta^K(R).$ 
From \eqref{equation:DescrofSetforIdeal} it follows that $C \leq \ell_R(K\ov{R}/K)-\ell_R(R/K).$ 
Hence in order to prove the second assertion we may assume that $C$ is finite. Then, by Theorem \ref{theorem:finiteness-of-g1}, $R$ is analytically unramified and hence $\ov{R}$ is a finite $R$-module. Again using \eqref{equation:DescrofSetforIdeal}, 
we get $C \geq \ell_R(K\ov{R}/K)-\ell_R(R/K).$ 
\end{proof} 

\begin{remark}
\begin{enumerate}
 \item 
The containment in Theorem \ref{theorem:DescrofSet}\eqref{theorem:DescrofSetforMod} can be strict. Let $R$ be a Cohen-Macaulay local ring of dimension one and $I$ an $\m$-primary ideal. Choose an integer $t$ such that 
 $e_1(I,R)$ is not divisible by $t.$ Let $M=R^t.$ Then $e_1(J,M)=te_1(J,R)$ for every $\m$-primary ideal $J$ in $R.$ By \cite[Corollary 2.4]{KT}, $e_1(I,R)=\ell_R(B/R)$, for a finite $R$-module $B$ such that $R \subseteq B \subseteq S^{-1}R.$ Now, $R^t \subseteq N:=B \oplus R\oplus\cdots \oplus R \subseteq (S^{-1}R)^t$ and 
$N$ is a finite $R$-module. Also, $\ell_R(N/M)=\ell_R(B/R)=e_1(I,R).$ Suppose there exists an $\m$-primary ideal $J$ in $R$ such that $e_1(J,M)=\ell_R(N/M)=e_1(I,R).$ Then $te_1(J,R)=e_1(I,R)$ which is a contradiction. This implies that the containment in Theorem \ref{theorem:DescrofSet}\eqref{theorem:DescrofSetforMod} can be strict.
\item Let $R$ be a Cohen-Macaulay local ring of dimension one and $M=R^t.$ In this case, $ \Delta_R(M)=\{te_1(I,R): I \mbox{ is an $\m$-primary ideal in $R$}\}.$ Hence, by \cite[Theorem 1.2]{KT}, $\sup \Delta_R(M)=t\ell_R(\overline{R}/R)=\ell_R(\overline{R}/R) \mu_R(M)$, which shows that the bound in Proposition \ref{proposition:SupAndInfOfSet}\eqref{proposition:SupOfSet} can be achieved.
\end{enumerate}
\end{remark}

\section*{Acknowledgements}
The first author is grateful to Prof. Santiago Zarzuela for insightful discussions on section 5. His ideas were helpful to get the results in section 5. She thanks the Institute of Mathematical Sciences (IMSc),
Chennai for supporting her travel to the Centre de Recerca Matem\`atica (CRM), Barcelona during which some part of the work is done. She also thanks CRM for providing local hospitality during the visit. The second author is indebted to her advisor Prof. Anupam Saikia for the encouragement to pursue
this work. She also thanks the Indian Institute of Technology, Guwahati
for granting the Ph.D. scholarship and IMSc for its hospitality where a significant part of this work was discussed. 

\newcommand{\etalchar}[1]{$^{#1}$}

\end{document}